%


\documentclass[12pt,a4paper,twoside,reqno]{amsart}


\usepackage{tikz}
\usepackage{slashed}
\usepackage{amsmath,amscd}
\usepackage{amssymb,amsfonts,mathrsfs}
\usepackage[driver=pdftex,margin=3cm,heightrounded=true,centering]{geometry}
\usepackage{JK}
\usepackage[colorlinks=true,linkcolor=blue,citecolor=blue]{hyperref}


\setcounter{tocdepth}{2}

\tolerance=2000
\emergencystretch=20pt
\newtheorem{thm}{Theorem}[section]

\usepackage{graphicx}

\begin{document}
\title{On a spectral flow formula for the homological index}

\author{Alan Carey}
\address{Mathematical Sciences Institute,
Australian National University,
Canberra 0200, Australia}
\email{alan.carey@anu.edu.au}

\author{Harald Grosse}
\address{Department of Physics, University of Vienna, Boltzmanngasse, Vienna}
\email{harald.grosse@univie.ac.at}

\author{Jens Kaad}
\address{Institute for Mathematics, Radboud University, 6525 AJ Nijmegen, The Netherlands}
\email{jenskaad@hotmail.com}

\thanks{The first author thanks the Alexander von Humboldt Stiftung and colleagues at the University of M\"unster and acknowledges the support of the Australian Research Council. The third author acknowledges the support of the Radboud Excellence Initiative. All authors thank Fritz Gesztesy for discussions, the Hausdorff Institute for Mathematics and the Erwin Schr\"odinger Institute where some of this research was carried out. }
\subjclass[2010]{19K56; 47A55, 58B34}
\keywords{Index theory, spectral flow, perturbations} 

\begin{abstract}
Consider a selfadjoint unbounded operator $D$ on a Hilbert space $H$ and a one parameter norm
continuous family of selfadjoint bounded operators $\{A(t)\ \vert \ t\in \mathbb R\}$ that converges in norm to asymptotes $A_\pm$ at $\pm \infty$.  Then under certain conditions \cite{RS95} that include the assumption that 
the operators $\{D(t)= D+A(t), t\in \mathbb R\}$ all have discrete spectrum then the spectral flow along
the path $\{ D(t)\}$ can be shown to be equal to the index of $\partial_t+D(t)$ when the latter is an unbounded
Fredholm operator on $L^2(\mathbb R, H)$. In \cite{GLMST11} an investigation of the index=spectral flow question when
the operators in the path may have some essential spectrum was started but under restrictive assumptions that rule out differential operators in general.  In \cite{CGPST14a} the question of what happens when the Fredholm condition is dropped altogether was investigated. In these circumstances the Fredholm index is replaced by the Witten index. 

In this paper we take the investigation begun in \cite{CGPST14a} much further.
We show how to generalise a formula known from the setting of the $L^2$ index theorem
to the non-Fredholm setting.  Restricting back to the case of
 selfadjoint Fredholm operators our formula extends the result of \cite{RS95} in the sense of relaxing the discrete spectrum condition. It also generalises some other Fredholm operator results of \cite{Pu08, GLMST11, CGPST14a} that permit essential spectrum for the operators in the path. 
Our result may also apply however when the operators $\{D(t)\}$ have
essential spectrum equal to the whole real line. 

Our main theorem gives a trace formula relating the 
homological index of \cite{CaKa:TIH} to an integral formula that is known, for a path of selfadjoint Fredholms with compact resolvent and
with unitarily equivalent endpoints, to compute spectral flow. Our formula however, applies to paths of selfadjoint non-Fredholm operators.
We interpret this as indicating there is a generalisation of spectral flow to the non-Fredholm setting.
\end{abstract}

\maketitle
\tableofcontents

\section{Introduction}

\subsection{Motivation}

The issue of the relationship between spectral flow and the Fredholm index was first raised in \cite{APS75} and settled in the most definitive fashion for certain families of selfadjoint unbounded operators with compact resolvent in \cite{RS95}. For differential operators on noncompact manifolds it is typically the case that they possess some essential spectrum. An extension of the result of \cite{RS95} to this situation and its relationship to scattering theory was initiated in \cite{GLMST11} following \cite{Pu08}. However the key assumption in \cite{GLMST11} is that they consider spectral flow between selfadjoint operators that differ by a relatively trace class perturbation.  This latter assumption is violated in general for differential operators (although not for pseudodifferential operators). Indeed, there is comparatively little work available in the way of index formulas for operators with essential spectrum except for \cite{BGGSS:WKS}, \cite{BMS88} and previous work by the authors and their collaborators. This history is referenced in
 \cite{CGPST14a} which also contains results on an index theory for certain non-Fredholm operators using the model operator formalism of \cite{GLMST11}. 

More generally, the model operators of \cite{RS95} provide prototypes  for various applications  such as those arising in connection with the Maslov index, Morse theory, Floer homology, Sturm oscillation theory, etc. The 
principle aim in \cite{Pu08} and \cite{GLMST11} was to extend the  results in \cite{RS95}
(albeit subject to  a relatively trace class perturbation condition), in a fashion permitting essential spectra. This work has motivated the current investigation where we provide evidence that the relationship between the Fredholm index and spectral flow is a special case of an operator trace identity that holds without any
Fredholm assumptions. The fundamental trace formula of \cite{GLMST11} fits into the general form of the identity we prove here.

The prototype of our formula also appears implicitly in \cite{BCPRSW} where it is proved in the context of an
$L^2$ index theorem for covering spaces.  To simplify the discussion and to explain the formula
we now describe only the special case of \cite{BCPRSW} that applies to compact manifolds.

\subsection{The index=spectral flow formula}
We use the notation of \cite{BCPRSW} as the setting differs from that considered in this 
current paper. Assume that we have an odd dimensional closed manifold $M$.

The analysis of spectral flow of a path of Dirac-type operators on sections of bundles over $M$ traditionally proceeds by replacing $M$ by $M\times S^1$ or $M\times [0,1]$ and considering the Dirac operator on this even dimensional manifold.

The argument in \cite{BCPRSW} uses an adiabatic limiting process, which leads to the formula for the leading 
term in the expansion of the (difference of) heat kernels. In this paper we also use an analogous adiabatic limiting process to establish our main result. 

Let $B : C^{\infty}(M;S) \to C^{\infty}(M;S)$ denote a Dirac-type operator acting on sections of a bundle 
of Clifford modules $S$ over a closed, odd-dimensional manifold $M$ as in \cite{BW}. Introduce an auxiliary hermitian vector bundle 
$E$ with hermitian connection $\nabla$, and the operator
$B_0 = B \otimes_{\nabla}\T{Id}_E$. Let $g : E \to E$ denote a unitary bundle automorphism, then we can introduce the operator
\[
B_1 = gB_0g^{-1} = (\T{Id} \otimes g) (B \otimes_{\nabla}\T{Id}_E)(\T{Id} \otimes g^{-1})
\]
The difference $C=B_1 - B_0 = [g,B]g^{-1}$ is a bundle endomorphism and we want to present a formula 
for the spectral flow along a path joining $B_0$ and $B_1$.

Spectral flow is a homotopy invariant and the space of bounded perturbations of $B_0$ is affine. So in this example we may restrict ourselves to the study of the spectral flow of a smooth family of selfadjoint operators parametrised by $S^1$. We introduce a smooth cut-off function $\alpha : \mathbb R \to \mathbb R$, such that
\[
\alpha(s) \ = 
 \ \begin{cases} & 0 
\quad \ \text{if}
 \ \ \ s \le 1/4  \\
    &  1  \quad \ \text{if}
 \ \ 3/4 \le s \end{cases}
\]
We also assume that there exists a positive constant $c$, such that $\biggm|\frac{d^k\alpha}{ds^k}\biggm| \le c{\cdot}s$ 
for $0\le s \le 1$ , $k = 0,1,2$. Now we consider the family
\begin{equation}\label{e:2}
\{B_s = B_0 + \alpha(s)C\}
\end{equation}
which in an obvious way provides us with a family of operators on $S^1$. We may also consider the corresponding operator $\mathcal D = \partial_s + B_s$ on the closed manifold $S^1 \times M$ where $B_s$ is given by the formula (\ref{e:2}). The operator $\mathcal D$ acts on sections of the bundle $[0,1] \times S\otimes E/_{\cong}$, where the identification is given by
\[
(1,y;g(y)w) \cong (0,y;w) \ \ \text{where} \ \ 
w \in S_y \otimes E_y
\]

By rescaling the variable $s\to s/\delta$ in the part of the Schwartz kernel for the heat operators that arises from
$\partial_s$ and taking the (adiabatic) limit $\delta\to 0$, \cite{BCPRSW} establishes that the following formula holds,
\begin{equation}\label{e:3}
\mbox{index} \ \mathcal D = \T{Tr}_{M\times S^1}(e^{-\epsilon \mathcal D^\ast \mathcal D}-e^{-\epsilon \mathcal D \mathcal D^\ast})
 = \T{sf}\{B_s\} = \sqrt{\frac{\epsilon}{\pi}}\int_0^1
\T{Tr}_M( {\dot B}_s e^{-\epsilon B_s^2})\, ds
\end{equation}
where as usual ${\dot B}_s = \frac{dB}{ds}$.

As we remarked above the equality index=spectral flow  in (\ref{e:3}) goes back to the original Atiyah-Patodi-Singer paper \cite{APS75}. They also proved a formula $\T{sf}\{B_s\} = \int_0^1{ \eta}_sds$, where $\eta_s$ denotes the $\eta$-invariant of the operator $B_s$. The equality 
\[
\T{sf}\{B_s\} = \sqrt{\frac{\epsilon}{\pi}} \int_0^1\T{Tr}_M{\dot B}_se^{-\epsilon B_s^2}\, ds
\]
and more was proved by Wojciechowski around 1991 and published in one of his IUPUI preprints \cite{KPW1}. The formal paper never appeared but the result eventually resurfaced in the paper by Getzler \cite{G}. Getzler's paper motivated a careful investigation of such spectral flow formulas in \cite{CP1,CP2}. Note that to our knowledge the first mathematically precise formulation of the adiabatic limit is in \cite{Ch87}.

Using a Laplace transform argument of the kind employed in \cite{CP2}  we now see that for $r>0$ 
\[
\int_0^\infty \epsilon^re^{-\epsilon} (e^{-\epsilon \mathcal D^\ast\mathcal D}
-e^{-\epsilon \mathcal D\mathcal D^\ast})\, d\epsilon
=\Gamma(r) ((1+\mathcal D^*\mathcal D)^{-r}-(1+ \mathcal D \mathcal D^*)^{-r})
\]
while, inserting the same function of $\epsilon$ and integrating  on the RHS we obtain:
\[
\int_0^\infty \epsilon^{r+1/2}e^{-\epsilon}{\pi}^{-1/2}\int\frac{dB(s)}{ds} e^{-\epsilon B(s)^2}\, dsd\epsilon
\]
and because it is permissable to interchange the order of integration (as in \cite{CP2})  we obtain
\[
\Gamma({r+1/2})\pi^{-1/2}\int\frac{dB(s)}{ds} (1+B(s)^2)^{-(r+1/2)}\, ds
\]
Under the assumptions made here we may apply traces to these expressions and then interchange the
trace and integral (as in \cite{CP2}) to obtain for $r$ sufficiently large:
\begin{equation}\label{FTF}
\T{Tr}_{M\times S^1}((1+\mathcal D^\ast\mathcal D)^{-r}-(1+ \mathcal D\mathcal D^\ast)^{-r})
=C_{r+1/2}
\int \T{Tr}_M(\frac{dB(s)}{ds} (1+B(s)^2)^{-(r+1/2)})\, ds
\end{equation}
where $C_{r+1/2}=\Gamma({r+1/2})\pi^{-1/2}\Gamma(r)^{-1}$. We refer to this last equation as the {\it integral trace formula}.

{\it The objective of this paper is to show that this integral trace formula holds when we relax the 
condition that we are working on a compact manifold $M$. In this paper we will replace the manifold situation by an abstract approach.  
This is possible because we introduce and exploit resolvent techniques that are not
available in the heat kernel method. Our abstract approach includes examples on noncompact manifolds and makes no reference to the Fredholm property. The resolvent methods 
and `adiabatic approximation' we employ may
also  have wider application.}

In the notation above this means  that  while the individual operators 
\[
(1+\mathcal D^\ast\mathcal D)^{-r}, \ (1+ \mathcal D\mathcal D^\ast)^{-r}, \ (1+B(s)^2)^{-(r+1/2)}
\]
are no longer trace class for any $r$, the particular combinations of operators that appear under the trace in the trace formula remain
trace class and the formula will be proved to hold. What this result supports is our contention that the index=spectral flow theorem is a special case of an operator trace identity (the integral trace formula) that holds in great generality.

\subsection{Relation to earlier work}

The LHS of the integral trace formula (\ref{FTF}) was shown in \cite{CaKa:TIH} to have a (co)-homological interpretation. In \cite{CaKa:TIH} we studied the algebra generated by a pair $T, T^*$ satisfying
\begin{equation}\label{TC}
\T{Tr}[(1-TT^*)^n -(1-T^*T)^n]<\infty 
\end{equation} for some positive integer $n$.
Then we showed that the real number  (called the homological index there),
$\T{Tr}[(1-TT^*)^n -(1-T^*T)^n] $,
computes the pairing of a certain element of a homology group of the algebra generated by the operators
$T, T^*$ with an element of the dual cohomology group.

The relationship of this general result for pairs $T, T^*$ satisfying (\ref{TC})
to the integral trace formula arises via the mapping that sends unbounded operators
$\mathcal D$ to $T_{\mathcal D}=\mathcal D(1+\mathcal D^*\mathcal D)^{-1/2}$.
We show later that for the operators $\mathcal D$ studied in this paper
the corresponding operators $T_{\mathcal D}$ satisfy (\ref{TC}).
Thus the integral trace formula expresses the homological index in terms of an integral formula
that in the case of Fredholm operators with purely discrete spectrum is computing spectral flow.
This suggests that when we do not impose Fredholm or discrete spectrum conditions on the operators in the path  then the RHS of the integral trace formula is a generalisation of spectral flow.
Most importantly it allows the operators in the path to have essential spectrum.

We note that previous evidence has been gathered that supports this view. In \cite{GLMST11, CGPST14a}
 versions of the integral trace formula are proved.  Under very different hypotheses (which are too restrictive to support the analysis described  here)
in  \cite{GLMST11} the $r=1$ case is studied when the operators on both sides
of the integral trace formula are
Fredholm operators. There it is shown that the homological index satisfies
\[
\T{Tr}[(1-T_{\mathcal D}T_{\mathcal D}^*) -(1-T_{\mathcal D}^*T_{\mathcal D})]
=\T{Tr}[(1+\mathcal D^\ast\mathcal D)^{-1}-(1+ \mathcal D\mathcal D^\ast)^{-1}]
\]
while the scaling limit
$\lim_{\lambda\to 0} \T{Tr}[(1+\lambda^{-1}\mathcal D^\ast\mathcal D)^{-1}-(1+ \lambda^{-1}\mathcal D\mathcal D^\ast)^{-1}]$
is equal to the Fredholm index of $\mathcal D$.  From the version of the integral trace formula 
in \cite{GLMST11} it is established by an indirect argument that the RHS is related to spectral flow.
In the non-Fredholm case as studied in \cite{CGPST14a}, again under hypotheses that are far too restrictive for our purposes,
the limit
$\lim_{\lambda\to 0} \T{Tr}[(1+\lambda^{-1}\mathcal D^\ast\mathcal D)^{-1}-(1+ \lambda^{-1}\mathcal D\mathcal D^\ast)^{-1}]$
is shown to compute the so-called Witten index\footnote{For a full discussion of the Witten index, spectral shift functions and their relation to the Fredholm index see \cite{CGPST14a}.}  while the RHS of the integral trace formula is linked to Krein's spectral shift function for the pair of operators  given by taking the endpoints of the path.  It is not known at this time how to interpret this
spectral shift function as giving a generalisation of spectral flow to the situation where the path
does not consist of Fredholm operators. Work is in progress on low dimensional examples 
to investigate this.

\subsection{The setting and main result}\label{s:set}
For brevity we will write $H_1=L^2(\mathbb R)$, $H_2$ denotes a separable Hilbert space
and sometimes we use the Hilbert space tensor product $H=H_1\widehat{\otimes} H_2$ which is identified with $L^2(\mathbb R, H_2)$.
Throughout this text $D_2 : \sD(D_2) \to H_2$ will be a selfadjoint unbounded operator acting on  $H_2$.
We will apply the notation $\De_2 := D_2^2 : \sD(D_2^2) \to H_2$ for the square of $D_2 : \sD(D_2) \to H_2$. This unbounded operator is then both selfadjoint and positive.

We define the dense subspace
\[
H_2^\infty \su H_2 \q H_2^\infty := \cap_{k = 0}^\infty \sD(\De_2^{k/2})
\]
and notice that $H_2^\infty$ is a core for each of the positive unbounded operators $\De_2^{k/2} : \sD(\De_2^{k/2}) \to H_2$, $k \in \nn_0$.
For each bounded operator $T : H_2 \to H_2$ such that $T\big( H_2^\infty \big) \su H_2^\infty$ we denote the $k^{\T{th}}$ iterated commutator with $\De_2^{1/2}$ by
\[
\de_2^k(T) : H_2^\infty \to H_2 \q k \in \nn_0
\]
Thus, for example $\de_2^2(T) = \De_2 T + T \De_2 - 2 \cd \De_2^{1/2} T \De_2^{1/2} : H_2^\infty \to H_2$.

For each $q \in [1,\infty)$, we let $\sL^q(H_2)$ denote the $q^{\T{th}}$ Schatten ideal in the bounded operators $\sL(H_2)$.
Let us fix a selfadjoint bounded operator $A : H_2 \to H_2$.
The following assumption will be in effect throughout this text:

\begin{assu}\label{a:psesum}
\begin{enumerate}
\item $A$ is a \emph{pseudodifferential operator of order $0$} with respect to $\De_2 : \sD(\De_2) \to H_2$. This means,
$
A(H_2^\infty) \su H_2^\infty
$
and the iterated commutator
$
\de_2^k(A) : H_2^\infty \to H_2
$
extends to a bounded operator on $H_2$ for all $k \in \nn_0$.
\item There exists a $p_0 \in (0,\infty)$ 
such that
\[
(1 + \De_2)^{-r/2} A (1 + \De_2)^{-s/2} \in \sL^q(H_2)
\]
for all $r, s \in [0,\infty)$ with $r + s > 0$ and all $q \in [1,\infty) \cap \big(p_0/(r + s), \infty\big)$.
\end{enumerate}
\end{assu}

\begin{remark}
We are working within the framework of the Connes-Moscovici pseudodifferential calculus \cite{CoMo:LIF}. However we make no use of the notion of spectral triple and indeed contrary to the case of non-unital spectral triples as expounded in \cite{CGRS:ILN}, we will \emph{never} assume that the commutator
$
[D_2, A] : \sD(D_2) \to H_2
$
extends to a bounded operator on $H_2$.
\end{remark}

Throughout this text we let $p_0 \in (0,\infty)$ be fixed and chosen to satisfy the second condition in the above assumption.
Next we introduce a smooth function $h : \rr \to \rr$ with the property that there exists a $K \in \nn$ with 
\begin{equation}\label{eq:conasy}
h(t) = h(K) \T{ and } h(-t) = h(-K) \T{ for all } t \geq K
\end{equation}
We let $h_+ := h(K)$ and $h_- := h(-K)$.

Define the selfadjoint bounded operator 
\[
h \cd A : L^2(\rr,H_2) \to L^2(\rr,H_2)
\]
by $(h \cd A)(\xi)(t) = h(t) \cd A(\xi(t))$ for all $\xi \in C_c(\rr,H_2)$ and all $t \in \rr$. Here $L^2(\rr,H_2)$ denotes the Hilbert space of (equivalence classes) of square integrable functions from $\rr$ to $H_2$ and $C_c(\rr,H_2) \su L^2(\rr,H_2)$ denotes the dense subspace of continuous compactly supported functions from $\rr$ to $H_2$.

Let us also define the unbounded operator $\dir_+ : \sD(\dir_+) \to L^2(\rr,H_2)$ as the closure of
\[
\frac{d}{dt} \ot 1 + 1 \ot D_2 : C_c^\infty(\rr) \ot \sD(D_2) \to L^2(\rr,H_2)
\]
Then $\dir_+$ is normal with adjoint $\dir_- : \sD(\dir_-) \to L^2(\rr,H_2)$ given by the closure of
\[
- \frac{d}{dt} \ot 1 + 1 \ot D_2  : C_c^\infty(\rr) \ot \sD(D_2) \to L^2(\rr,H_2)
\]
We apply the notation $\lap := \dir_- \dir_+ = \dir_+ \dir_-$ and notice that $\lap$ agrees with the closure of
\[
- \big( \frac{d}{dt}\big)^2 \ot 1 + 1 \ot \De_2 : C_c^\infty(\rr) \ot \sD(\De_2) \to L^2(\rr,H_2)
\]

We form the closed unbounded operator
\[
D_+ := \dir_+ + h \cd A : \sD(\dir_+) \to L^2(\rr,H_2)
\]
and note that the adjoint is given by
\[
D_+^* := D_- := \dir_- + h \cd A : \sD(\dir_-) \to L^2(\rr,H_2) 
\]

To state the main theorem we introduce the notation
\[
A_+ := h_+ \cd A = h(\infty) \cd A \q A_- := h_- \cd A = h(-\infty) \cd A
\]
Our main result is the following:

\begin{thm}\label{t:intlarscaI}
Let $m \in \nn$ with $m > p_0/2$ be given. Then
\[
\begin{split}
&\T{Tr}\big( (\la + D_- D_+)^{-m} - (\la + D_+ D_-)^{-m} \big)\\
& \q = C_{m + 1/2}
\cd \int_0^1 \T{Tr}\Big( A_+ (\la + (D_2 + r \cd A_+)^2)^{-m-1/2} \\
& \qqq \qq \qq - A_- (\la + (D_2 + r \cd A_-)^2)^{-m-1/2} \Big) \, dr
\end{split}
\]
for all $\la > 0$ where the constant $C_{m + 1/2} > 0$ is given above and by
\[
C_{m + 1/2} := \frac{m}{\pi} \cd \int_{-\infty}^{\infty} (1 + \eta^2)^{-m-1} \, d\eta
\]
\end{thm}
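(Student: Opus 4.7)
My plan follows the Callias-type strategy of reducing the LHS to a contribution localised at $t = \pm \infty$, where only the asymptotic operators $D_2 + A_\pm$ survive. A useful heuristic is the heat-kernel version of the identity,
\[
\T{Tr}\big(e^{-\epsilon D_- D_+} - e^{-\epsilon D_+ D_-}\big) = \sqrt{\epsilon/\pi} \cd \int_0^1 \T{Tr}\big(A_+ e^{-\epsilon (D_2 + r A_+)^2} - A_- e^{-\epsilon (D_2 + r A_-)^2}\big)\, dr,
\]
which after Laplace-transforming against $\epsilon^{m-1} e^{-\lambda \epsilon}/\Gamma(m)$ produces the stated resolvent formula, the exponent shift $m \to m + 1/2$ and the constant $C_{m+1/2} = \Gamma(m + 1/2)/(\sqrt{\pi}\,\Gamma(m))$ arising from $\int_0^\infty \epsilon^{m-1/2} e^{-\lambda \epsilon}\,d\epsilon = \Gamma(m + 1/2)\,\lambda^{-m-1/2}$. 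In the non-Fredholm setting here, the individual heat traces need not be finite, so the argument must be executed directly in the resolvent picture; the heat-kernel version only indicates what to aim at.

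The algebraic heart is the super-commutator identity
\[
D_- D_+ - D_+ D_- = -2\, h'(t)\, A,
\]
valid as an equality of multiplication operators compactly supported in $t$ (since $h'$ vanishes outside $[-K, K]$); this is the direct analogue of the Callias boundary. Applying the integrated resolvent identity $\T{Tr}(X^{-m} - Y^{-m}) = -m \int_0^1 \T{Tr}\big((s X + (1-s) Y)^{-m-1}(X - Y)\big)\, ds$ (valid under cyclicity of the trace) with $X = \lambda + D_- D_+$ and $Y = \lambda + D_+ D_-$, the LHS rewrites as
\[
2 m \int_0^1 \T{Tr}\Big(h'(t)\, A \cd \big(\lambda - \partial_t^2 + (D_2 + h(t)\, A)^2 + (1 - 2 s)\, h'(t)\, A\big)^{-m-1}\Big)\, ds.
\]
The first technical task is to prove that the sandwich above lies in $\sL^1\big(L^2(\mathbb R, H_2)\big)$ uniformly in $s$ and to justify the application of trace cyclicity; this is handled by the Schatten-class bounds in Assumption~\ref{a:psesum}, supplemented by the observation that $h'(t) A$ and $[D_2, A]$ act as smoothing corrections in the pseudodifferential sense.

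Next one performs a partial Fourier transform in the $t$-variable. The $t$-dependence of the inner operator resides in $(D_2 + h(t) A)^2 + (1 - 2 s)\, h'(t)\, A$, while $-\partial_t^2$ is diagonalised by the Fourier transform as the multiplication variable $\xi^2$. The one-dimensional integral
\[
\int_{-\infty}^\infty (\lambda + \xi^2 + Y)^{-m-1}\, d\xi = \frac{\pi\, C_{m+1/2}}{m}\, (\lambda + Y)^{-m-1/2},
\]
obtained via the substitution $\xi = \eta\sqrt{\lambda + Y}$ directly from the theorem's definition of $C_{m+1/2}$, is the sole source of the half-integer exponent and of the constant. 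The leading-order result is $s$-independent, so the $s$-integral evaluates to $1$; the $(1 - 2 s)\, h'(t)\, A$ term and the pseudodifferential corrections coming from the $t$-dependence of $D_2 + h(t) A$ are uniformly controlled by the iterated commutators $\delta_2^k(A)$ from Assumption~\ref{a:psesum} and contribute only to trace-class remainders that cancel out in the final expression.

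After these reductions the LHS equals $C_{m+1/2} \int_{\mathbb R} h'(t)\, \T{Tr}_{H_2}\!\big(A\, (\lambda + (D_2 + h(t)\, A)^2)^{-m-1/2}\big)\, dt$. The substitution $u := h(t)$ turns this into $C_{m+1/2} \int_{h_-}^{h_+} \T{Tr}\big(A\,(\lambda + (D_2 + u A)^2)^{-m-1/2}\big)\, du$---this is where the asymptotic values $h_\pm$ enter, through the limits of integration. Finally, splitting $\int_{h_-}^{h_+} = \int_0^{h_+} - \int_0^{h_-}$ and rescaling $u = r h_\pm$ in each piece yields the RHS in the stated form $C_{m+1/2} \int_0^1 \big[\T{Tr}(A_+ \cdots) - \T{Tr}(A_- \cdots)\big]\, dr$. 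The \emph{main obstacle} is the trace-class bookkeeping: since the individual resolvent powers $(\lambda + D_- D_+)^{-m-1}$ and $(\lambda + D_+ D_-)^{-m-1}$ are not trace class on their own (only their supersymmetric differences are), every commutator manoeuvre, every interchange of $t$-integration with the trace, and every application of cyclicity must be carefully justified by Schatten-class estimates; the Connes--Moscovici smoothness encoded in Assumption~\ref{a:psesum}, and especially the boundedness of every iterated commutator $\delta_2^k(A)$, is the essential tool that makes this feasible.
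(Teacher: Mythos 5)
Your overall strategy — reduce the LHS to a boundary contribution via the super‑commutator $D_-D_+ - D_+D_- = -2h'(t)A$, integrate out the $t$–frequency to produce the $(\la + \cdot)^{-m-1/2}$ and the constant $C_{m+1/2}$, then substitute $u=h(t)$ — is the right shape, and the algebra you check (that $sX+(1-s)Y = \la - \partial_t^2 + (D_2+h(t)A)^2 + (1-2s)h'(t)A$) is correct. The paper reaches the same intermediate form, though it uses a finite telescoping sum $\sum_{j=1}^m R^{-j}\,F\,R^{-m+j-1}$ rather than your continuous $s$-interpolation; both are legitimate entry points.

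The genuine gap is your treatment of the $t$–dependence inside the resolvent. Because $D_2+h(t)A$ and the $(1-2s)h'(t)A$ term are multiplication operators in $t$, they do \emph{not} commute with $-\partial_t^2$, so the partial Fourier transform in $t$ does not cleanly produce $(\la+\xi^2 + (D_2+h(t)A)^2)^{-m-1}$; the error consists of commutators $[\lap, h(t)]$ appearing at every order of $m$. You assert that these errors, and the $(1-2s)h'(t)A$ term, "contribute only to trace-class remainders that cancel out", citing Assumption~\ref{a:psesum} as the control — but that assumption only gives you \emph{boundedness} and Schatten estimates, not vanishing. The commutator $[\lap,h]$ has a fixed, nonzero size; nothing in the assumptions makes these remainders disappear, and without an additional idea they simply do not cancel. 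This is precisely what the paper spends Sections 4--8 on: it replaces $h$ by a rescaled $h_\ep(t)=h(\ep t)$, proves that the homological index is \emph{invariant} under this rescaling (Theorem~\ref{t:adihomind}, using the topological invariance result of \cite{CaKa:TIH}), and then exploits the fact that $\| \si^k(h_\ep')\| \to 0$ and $\|\si^k([\lap,h_\ep])\| \to 0$ as $\ep\searrow 0$ (Lemmas~\ref{l:limderint}, \ref{l:supderalp}) to show the error terms vanish in the adiabatic limit. The key isolation step — pulling $h_\ep$ to the front of the resolvent, Theorem~\ref{t:adicharul} and Proposition~\ref{p:higderest} — requires uniform estimates on iterated commutators \emph{together with} the smallness of $\ep$; it is not implied by the pseudodifferential calculus alone. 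Your proof also needs, but omits, the restriction to large $\la$ for the convergence of the resolvent expansion and a subsequent analytic continuation in $\la$ (the paper handles this via Proposition~\ref{t:intlarsca} and \cite[Proposition 3.3]{CaGrKa:AOS}) to reach all $\la>0$. Without the adiabatic rescaling, your argument cannot close; with it, the rest of your outline (the $\xi$-integral producing $C_{m+1/2}$, the substitution $u=h(t)$, and the rescaling $u=rh_\pm$) matches the paper's Sections~9--10.
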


\begin{remark} We note an intriguing corollary of our result.
Notice that the endpoints of the operators in the path on the right hand side may have a kernel in which case $D_+$ is not Fredholm \cite{CGPST14a}. In this instance the scaling limit as $\lambda \to 0$ on the LHS computes the Witten index \cite{CGPST14a}. When the operators in the path on the RHS have compact resolvent and the endpoints are unitarily equivalent then the RHS computes spectral flow \cite{ACS} (independently of the value of $\lambda$). Thus, in these circumstances, the homological index is constant in $\lambda$ and integral.
\end{remark}

\section{Preliminaries}

We begin by explaining some simple results that justify some later computations.
The following result is standard and will be stated without a proof. The interested reader may consult the following references for details, \cite[Appendix B]{CoMo:LIF}, \cite[Chapter 3]{Hig:LFN}.

\begin{prop}\label{p:regpro}
Let $g : \rr \to \rr$ be a smooth function such that the derivative $\frac{d^k g}{dt} : \rr \to \rr$ is bounded for all $k \in \nn_0$. Then the product $g \cd A : L^2(\rr,H_2) \to L^2(\rr,H_2)$ is a pseudodifferential operator of order $0$ with respect to $\lap : \sD(\lap) \to L^2(\rr,H_2)$. 
\end{prop}

There is a  summability condition (in the sense of spectral triples) in Assumption \ref{a:psesum} 
that is not standard in the literature, and we therefore give a careful proof of the following proposition.

\begin{prop}\label{p:sumpro}
Let $g : \rr \to \rr$ be a smooth compactly supported function. Then
\[
(\lap + 1)^{-r/2} g \cd A (\lap + 1)^{-s/2} \in \sL^q\big( L^2(\rr,H_2) \big)
\]
for all $r,s \in [0,\infty)$ with $r + s > 0$ and all $q \in [1,\infty) \cap \big( (p_0 + 1)/(s + r) , \infty \big)$.
\end{prop}
\begin{proof}
Let $\De_1 : \sD(\De_1) \to L^2(\rr)$ denote the closure of the Laplacian $-\frac{d^2}{dt} : C_c^\infty(\rr) \to L^2(\rr)$. Let $s \in (0,\infty)$ and let us show that
\begin{equation}\label{eq:trarea}
g (1 + \De_1)^{-s/2} \in \sL^q(L^2(\rr))
\end{equation}
for all $q \in [1,\infty) \cap \big( 1/s, \infty \big)$. By an application of the H\"older inequality this will imply that
\[
(1 + \De_1)^{-r/2} g (1 + \De_1)^{-s/2} \in \sL^q(L^2(\rr))
\]
for all $r,s \in [0,\infty)$ with $r + s > 0$ and all $q \in [1,\infty) \cap \big( 1/(s + r), \infty \big)$.

Suppose first that $s \in (0,1/2]$. In this situation we have that 
\[
g (1 + \De_1)^{-s/2} \in \sL^q(L^2(\rr))
\]
for all $q \in (1/s,\infty)$ by an application of \cite[Theorem 4.1]{Sim:TIA}.

Suppose now that $s \in (1/2,\infty)$ and let $q \in [1,\infty) \cap ( 1/s,\infty )$ be given. Since $\sL^{q_0}(L^2(\rr)) \su \sL^{q_1}(L^2(\rr))$ for $1 \leq q_0 \leq q_1$ we may suppose that $q \in [1,2] \cap (1/s,2]$. By \cite[Theorem 4.5]{Sim:TIA} we can conclude that $g (1 + \De_1)^{-s/2} \in \sL^q(L^2(\rr))$ provided that
\[
\sum_{n \in \zz} \big( \int_{n - 1/2}^{n + 1/2} (1 + t^2)^{-s} \, dt \big)^{q/2} < \infty
\]
But this follows from the estimates
\[
\sum_{n = 2}^\infty \big( \int_{n - 1/2}^{n + 1/2} (1 + t^2)^{-s} \, dt \big)^{q/2}
\leq \sum_{n = 2}^\infty  (1 + (n-1)^2)^{-sq/2}
\leq \sum_{n = 1}^\infty n^{-sq} < \infty
\]
where the last sum converges since $q > 1/s$.

Let now $r,s \in [0,\infty)$ with $r + s > 0$ be given and let $q \in [1,\infty) \cap \big( (p_0 + 1)/(s + r),\infty \big)$. We need to show that
\[
(\lap + 1)^{-r/2} g A (\lap + 1)^{-s/2} \in \sL^q\big( L^2(\rr,H_2) \big)
\]

In what follows, we will suppress tensor products with the identity operator from the notation. Thus, we will write $\De_1$ and $\De_2$ for the closures of
\[
\De_1 \ot 1 : \sD(\De_1) \ot H_2 \to L^2(\rr,H_2) \q \T{and} \q
1 \ot \De_2 : H_2 \ot \sD(\De_2) \to L^2(\rr,H_2)
\]
respectively.
We remark that the operators
\[
G_2 := (1 + \De_2)(\lap + 1)^{-1} \T{ and } G_1 := (1 + \De_1 )(\lap + 1)^{-1} : L^2(\rr,H_2) \to L^2(\rr,H_2)
\]
are bounded and positive. We remark also that these operators commute and that they also both commute with $(\lap + 1)^{-1}$, $(1 + \De_2)^{-1}$ and $(1 + \De_1)^{-1}$. We finally notice the relations
\[
G_2 \cd (1 + \De_2)^{-1} = (\lap + 1)^{-1} = G_1 \cd ( 1 + \De_1)^{-1}
\]

Let now
\[
s_1 := s/(1 + p_0) \q s_2 := (s p_0)/(1 + p_0) \q r_1 := r/(1 + p_0) \q r_2 := (rp_0)/(1 + p_0)
\]
Since $s_1 + s_2 = s$ and $r_1 + r_2 = r$ we obtain that
\[
\begin{split}
(\lap + 1)^{-r/2} g A (\lap + 1)^{-s/2}
& = G_1^{r_1/2} G_2^{r_2/2} \cd (1 + \De_2)^{-r_2/2} A (1 + \De_2)^{-s_2/2} \\
& \q \cd (1 + \De_1 )^{-r_1/2} g (1 + \De_1 )^{-s_1/2} \cd G_2^{s_2/2} G_1^{s_1/2}
\end{split}
\]

Recall next that $\sL^q\big( L^2(\rr) \big) \ot \sL^q( H_2 ) \su \sL^q\big( L^2(\rr,H_2) \big)$. It therefore suffices to show that
\[
\begin{split}
& (1 + \De_2)^{-r_2/2} A (1 + \De_2)^{-s_2/2} \in \sL^q(H_2) \q \T{and} \\
& (1 + \De_1)^{-r_1/2} g (1 + \De_1 )^{-s_1/2} \in \sL^q\big(L^2(\rr)\big)
\end{split}
\]
But this follows from the above since
$
p_0/(s_2 + r_2) = (p_0+1)/(s + r) = 1/(s_1 + r_1)
$
\end{proof}

\section{Existence of the homological index}\label{s:exihom}
Throughout this Section, the assumptions and the notation of Section \ref{s:set} will remain in effect.

{\bla
\emph{The following proposition shows that we have a well-defined \emph{homological index}:
\[
\T{H-Ind}_\la^m(D_+) := \la^m \cd \T{Tr}\big( (\la + D_- D_+)^{-m} - (\la + D_+ D_-)^{-m} \big)
\]
for all $\la > 0$ and all $m \in \nn$ with $m > (p_0 - 1)/2$. The main purpose of this paper is to compute this homological index in terms of an integral of a path of operators acting on the `inner' Hilbert space $H_2$.}}

\begin{prop}\label{p:exihom}
Let $\la > 0$ and $m \in \nn$ with $m > (p_0 - 1)/2$ be given. Then the difference
\[
(\la + D_- D_+)^{-m} - (\la + D_+ D_-)^{-m}
\]
is of trace class. 
\end{prop}
\begin{proof}
We are interested in applying \cite[Theorem 3.1]{CaGrKa:AOS}. We recall that this theorem shows that the difference
$
(\la + D_- D_+)^{-m} - (\la + D_+ D_-)^{-m}
$
is of trace class provided that the following conditions are satisfied:
\begin{enumerate}
\item $\dir_+ : \sD(\dir_+) \to L^2(\rr,H_2)$ is a normal unbounded operator.
\item The bounded operator $h A : L^2(\rr,H_2) \to L^2(\rr,H_2)$ is a pseudodifferential operator of order $0$ with respect to $\lap := \dir_- \dir_+$.  \item The sum of commutators 
\[
[\dir_+,h\cd A ] + [h \cd A,\dir_-] : \sD(\dir_+) \to L^2(\rr,H_2)
\]
extends to a bounded operator $F : L^2(\rr,H_2) \to L^2(\rr,H_2)$.
\item The bounded operator
\[
(\lap + 1)^{-j-1} F (\lap + 1)^{-m+ j}
\]
is of trace class for all $j \in \{0,\ldots,m-1\}$.
\end{enumerate}

We already remarked in Section \ref{s:set} that $\dir_+ : \sD(\dir_+) \to L^2(\rr,H_2)$ is a normal unbounded operator. Furthermore, it is implied by Proposition \ref{p:regpro} that condition $(2)$ is satisfied as well.
We now notice that the sum of commutators
\[
[\dir_+,h\cd A ] + [h \cd A,\dir_-] : \sD(\dir_+) \to L^2(\rr,H_2)
\]
extends to the bounded operator
\[
F := 2 \frac{d h}{dt} \cd A : L^2(\rr,H_2) \to L^2(\rr,H_2)
\]

The fact that condition $(4)$ is satisfied is now a consequence of Proposition \ref{p:sumpro}. Notice here that $\frac{dh}{dt} : \rr \to \rr$ has compact support since $h : \rr \to \rr$ has constant asymptotics, see \eqref{eq:conasy}.
\end{proof}

\section{Rescaling of the flow parameter}\label{s:resflopar}
In this Section we begin the `adiabatic limit' process and
throughout  we will apply the notation and the assumptions stated in Section \ref{s:set}.
For each $\ep > 0$ we define the function 
\[
h_\ep : \rr \to \rr \q h_\ep : t \mapsto h(\ep \cd t)
\]
We may then introduce the alternative closed unbounded operator
\[
D_+(\ep) = \dir_+ + h_\ep \cd A : \sD(\dir_+) \to L^2(\rr,H_2)
\]
which has the adjoint
\[
D_+(\ep)^* := D_-(\ep) := \dir_- + h_\ep \cd A : \sD(\dir_-) \to L^2(\rr,H_2)
\]

Since $h_\ep : \rr \to \rr$ is smooth and has a compactly supported derivative $\frac{d h_\ep}{dt} : \rr \to \rr$ we then have an alternative homological index:
\[
\T{H-Ind}^m_\la\big( D_+(\ep) \big) = \T{H-Ind}_\la^m\big( \dir_+ + h_\ep \cd A \big)
\]
whenever $\la > 0$ and $m \in \nn$ with $m > (p_0 - 1)/2$, see Proposition \ref{p:exihom}.

\emph{The aim of this Section is to prove, for $m \in \nn$ with $m > p_0/2$, the identity
\[
\T{H-Ind}^m_\la\big( D_+(\ep) \big) = \T{H-Ind}^m_\la(D_+).
\]
 This will rely on the topological invariance result for the homological index which was obtained in \cite[Theorem 8.1]{CaKa:TIH}.}
\medskip

In order to apply our invariance result, we define the selfadjoint bounded operator
\[
B_\ep := \ma{cc}{0 & (h_\ep - h) A \\ (h_\ep - h) A & 0} : L^2(\rr,H_2)^2 \to L^2(\rr,H_2)^2
\]
and the selfadjoint unbounded operators
\[
D_t := \ma{cc}{0 & D_- \\ D_+ & 0} + t B_\ep : \sD(\dir_+)^2 \to L^2(\rr,H_2)^2 \q t \in [0,1]
\]
Thus $D_0 = \ma{cc}{0 & D_- \\ D_+ & 0}$ and $D_1 = \ma{cc}{0 & D_-(\ep) \\ D_+(\ep) & 0}$.

We emphasize that the difference $h_\ep - h : \rr \to \rr$ is a smooth \emph{compactly supported} function.
Recall the notation $H := L^2(\rr,H_2)$ and write $H^\infty := \cap_{k = 0}^\infty \sD(\lap^{k/2}) \su H$.

\begin{thm}\label{t:adihomind}
Let $m \in \nn$ with $m > p_0/2$. Then
\[
\T{H-Ind}_\la^m(D_+(\ep)) = \T{H-Ind}_\la^m(D_+)
\]
for all $\ep > 0$ and all $\la > 0$. In particular, we have that
\[
\T{H-Ind}_\la^m(D_+) = \lim_{\ep \searrow 0} \T{H-Ind}_\la^m(D_+(\ep))
\]
for all $\la > 0$.
\end{thm}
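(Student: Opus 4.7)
The plan is to invoke the topological invariance theorem for the homological index, namely \cite[Theorem 8.1]{CaKa:TIH}, applied to the linear path $\{D_t\}_{t \in [0,1]}$ introduced just before the theorem. Since $D_0$ is the usual `doubling' of the pair $(D_+,D_-)$ and $D_1$ is the doubling of $(D_+(\ep),D_-(\ep))$, constancy of $t \mapsto \T{H-Ind}_\la^m(D_t)$ on $[0,1]$ will immediately yield $\T{H-Ind}_\la^m(D_+) = \T{H-Ind}_\la^m(D_+(\ep))$ at each fixed $\ep > 0$. The displayed limit statement is then tautological, as $\ep \mapsto \T{H-Ind}_\la^m(D_+(\ep))$ is constant on $(0,\infty)$ with value $\T{H-Ind}_\la^m(D_+)$.

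The crux is to verify the hypotheses of \cite[Theorem 8.1]{CaKa:TIH} for the affine perturbation $t \mapsto t B_\ep$. The decisive observation is that the difference $h_\ep - h : \rr \to \rr$ is smooth and \emph{compactly supported}: both functions are eventually equal to the same asymptotic constants $h_\pm$ outside an interval determined by $K$ and $\ep$. By Proposition \ref{p:regpro}, $(h_\ep - h) \cd A$ is therefore a pseudodifferential operator of order $0$ with respect to $\lap$, so $B_\ep$ is pseudodifferential with respect to the doubled Laplacian. The summability required by the invariance theorem reduces, after unwrapping the $2 \times 2$ structure, to trace-class statements for expressions of the form
\[
(\lap + 1)^{-r/2} (h_\ep - h) \cd A (\lap + 1)^{-s/2}
\]
together with the analogous expressions involving commutators that appear upon differentiating the path in $t$. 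Both are supplied by Proposition \ref{p:sumpro}, thanks to the compact support of $h_\ep - h$ and of its derivative with respect to the flow variable.

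Granting these checks, \cite[Theorem 8.1]{CaKa:TIH} produces the required constancy of $t \mapsto \T{H-Ind}_\la^m(D_t)$ and the theorem follows. The threshold $m > p_0/2$ is exactly one half-power stronger than the bare existence threshold $m > (p_0 - 1)/2$ of Proposition \ref{p:exihom}: this extra regularity reflects the fact that the invariance argument must control not only the difference of resolvent powers itself, but also the derivative in $t$ of the path of indices, which introduces an additional factor of $(\lap + 1)^{-1/2}$ in the regularisation.

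The main obstacle I anticipate is precisely the bookkeeping of Schatten exponents across the $2 \times 2$ doubling and the commutator manipulations required to translate the abstract hypotheses of \cite[Theorem 8.1]{CaKa:TIH} into the concrete summability statements of Propositions \ref{p:regpro} and \ref{p:sumpro}; in particular one must check that the sharper threshold $m > p_0/2$ is genuinely sufficient and not merely necessary. Modulo this careful accounting, the proof is a direct application of an already established invariance principle.
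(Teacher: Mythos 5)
Your proposal matches the paper's proof: both invoke \cite[Theorem 8.1]{CaKa:TIH} on the affine path $D_t = D_0 + tB_\ep$ and reduce the summability hypotheses, via the compact support of $h_\ep - h$, to Propositions \ref{p:regpro} and \ref{p:sumpro}. The only detail you gloss over (and correctly flag as bookkeeping) is the conversion of the powers $(1 + D_0^2)^{-s}$ appearing in the hypotheses of the invariance theorem into powers $(\lap + 1)^{-s}$, which the paper handles via \cite[Lemma 2.6]{CGPRS:SNS} together with Proposition \ref{p:regpro} before reducing to the diagonal estimates.
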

\begin{proof}
Let $\ep > 0$ be given. According to the conditions of \cite[Theorem 8.1]{CaKa:TIH} we need to show that
\begin{enumerate}
\item The map 
$
t \mapsto T_t := D_t (1 + D_t)^{-1/2}; \  [0,1] \to \sL(H \op H)
$
is continuously differentiable in operator norm.
\item The bounded operator
\[
(1 + D_0^2)^{-j} B_\ep (1 + D_0^2)^{-k-1/2} : H \op H \to H \op H
\]
lies in the Schatten ideal $\sL^{m/(j+k)}( H \op H )$ for all $j,k \in \{0,\ldots,m\}$ with $j + k \in \{1,\ldots,m\}$.
\item There exists a $\de \in (0,1/2)$ such that 
\[
B_\ep (1 + D_0^2)^{-m - 1/2 + \de} \in \sL^1( H \op H )
\]
\end{enumerate}

The first of these conditions is a consequence of \cite[Appendix A, Theorem 8]{CP1}. 
To verify the second and the third condition we notice that we may apply \cite[Lemma 2.6]{CGPRS:SNS} and Proposition \ref{p:regpro} to conclude that the following holds for each $s \in [0,\infty)$:
\begin{enumerate}
\item The bounded operator $(1 + D_0^2)^{-s}: H \op H \to H \op H$ has the dense subspace $H^\infty \op H^\infty \su H \op H$ as an invariant subspace.
\item The unbounded operator
\[
\ma{cc}{(\lap + 1)^s & 0 \\ 0 &  (\lap + 1)^s}(1 + D_0^2)^{-s} : H^\infty \op H^\infty \to H \op H
\]
extends to a bounded operator on $H \op H$.
\end{enumerate}

Let now $j,k \in \{0,\ldots,m\}$ be given with $j + k \in \{1,\ldots,m\}$. In view of the above observation it now suffices to show that
\[
\T{diag}\big(  (\lap + 1)^{-j} \big) \cd B_\ep \cd \T{diag}( (\lap + 1)^{-k-1/2})
\in \sL^{m/(j + k)}(H \op H)
\]
and furthermore that
\[
B_\ep \cd \T{diag}\big( (\lap + 1)^{-m - 1/2 + \de} \big) \in \sL^1(H \op H)
\]
for some $\de \in (0,1/2)$. Here $\T{diag}( (\lap + 1)^{-j})$ is notation for the two by two diagonal matrix with $(\lap + 1)^{-j}$ in both of the diagonal entries. 

The above conditions are however an immediate consequence of Proposition \ref{p:sumpro} since $h - h_\ep : \rr \to \rr$ is a smooth compactly supported function.
\end{proof}

\section{First approximation of the homological index}\label{s:firapphom}
Let us recall that $H := L^2(\rr,H_2)$ and $H^\infty := \cap_{k = 0}^\infty \sD(\lap^{k/2})$. In general, we will from now on freely use the terminology and results on holomorphic families of pseudodifferential operators which are contained in Appendix \ref{s:holpseope}. Furthermore, we will not distinguish in notation between the selfadjoint unbounded operators
\[
D_1 := \T{cl}(i\frac{d}{dt}) : \sD(D_1) \to L^2(\rr) \q \T{and} \q D_2 : \sD(D_2) \to H_2
\]
and their analogues $\T{cl}(D_1 \ot 1)$ and $\T{cl}(1 \ot D_2)$ on $H$ which are defined as the closures of
\[
D_1 \ot 1 : \sD(D_1) \ot H_2 \to H \q \T{and} \q 1 \ot D_2 : L^2(\rr) \ot \sD(D_2) \to H
\]
respectively.

For each $\ep \in (0,1]$, we define the first order pseudodifferential operator, with respect to $\lap : \sD(\lap) \to H$, $T_1(\ep) : \sD(T_1(\ep)) \to L^2(\rr,H_2)$ as the closure of
\begin{equation}\label{eq:pseone}
T_1(\ep)\big|_{H^\infty} := D_2 h_\ep A + h_\ep A D_2 : H^\infty \to H 
\end{equation}
Furthermore  we define the pseudodifferential operators of order $0$,
\begin{equation}\label{eq:psezer}
T_+(\ep) := h_\ep^2 \cd A^2 + \frac{d h_\ep}{dt} \cd A \q 
T_-(\ep) := h_\ep^2 \cd A^2 - \frac{d h_\ep}{dt} \cd A \q
T_0(\ep) := h_\ep^2 \cd A^2
\end{equation}
all of them with respect to $\lap : \sD(\lap) \to L^2(\rr,H_2)$.
Finally, we let
\[
\al_+(\ep) := T_1(\ep) + T_+(\ep) \q
\al_-(\ep) := T_1(\ep) + T_-(\ep) \q
\al_0(\ep) := T_1(\ep) + T_0(\ep)
\]
denote the associated sums of pseudodifferential operators, which have the common domain $\sD(T_1(\ep)) \su L^2(\rr,H_2)$.

For each $\ep \in (0,1]$, we define the selfadjoint bounded operator
\[
F(\ep) := 2 \frac{d h_\ep}{dt} \cd A : L^2(\rr,H_2) \to L^2(\rr,H_2)
\]
The reason for introducing the above pseudodifferential operators is the following two basic identities:
\begin{equation}\label{eq:basideeps}
\begin{split}
& D_-(\ep) D_+(\ep) = \lap + \al_-(\ep) : \sD(\lap) \to H \q \T{and} \\
& D_+(\ep) D_-(\ep) = \lap + \al_+(\ep) : \sD(\lap) \to H
\end{split}
\end{equation}

\noindent \emph{In this Section we will provide the first application of our rescaling result for the homological index (Theorem \ref{t:adihomind}). To be more precise we shall prove the following theorem, which shows that we may ignore the derivative $\frac{d}{dt}(h_\ep)$ when we compute in the adiabatic limit.}

\begin{thm}\label{t:firappind}
Let $m \in \nn_0$ with $m > p_0/2$ be given. Then there exists a constant $C > 0$ such that,
for all $\la \geq C$,
\[
\T{H-Ind}_\la^m(D_+)
= \lim_{\ep \searrow 0} \T{Tr}\Big( m \cd \la^m \cd F(\ep) \cd \big( \lap + \al_0(\ep) + \la \big)^{-m-1} \Big).
\]

\end{thm}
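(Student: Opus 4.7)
By Theorem \ref{t:adihomind} we may replace $D_+$ by $D_+(\ep)$ for an arbitrary $\ep > 0$. The definitions \eqref{eq:psezer} give $\al_\pm(\ep) = \al_0(\ep) \pm \tfrac12 F(\ep)$, so \eqref{eq:basideeps} becomes
\[
\la + D_-(\ep) D_+(\ep) = X(\ep) - \tfrac12 F(\ep), \q \la + D_+(\ep) D_-(\ep) = X(\ep) + \tfrac12 F(\ep),
\]
with $X(\ep) := \lap + \al_0(\ep) + \la$. Since $X(\ep) = \tfrac12 (\la + D_-D_+) + \tfrac12 (\la + D_+D_-)$ is a sum of two positive self-adjoint operators sharing the domain $\sD(\lap)$, we have $X(\ep) \ge \la > 0$ with bounded inverse. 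For $s \in [-\tfrac12, \tfrac12]$ the operator $B(s) := X(\ep) + s F(\ep)$ is likewise a convex combination of the two endpoint operators, hence $B(s) \ge \la$ uniformly in $s$ and $\ep$.

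\textbf{Duhamel expansion.} Differentiating $s \mapsto B(s)^{-m}$ and integrating gives
\[
\bigl(X(\ep) - \tfrac12 F(\ep)\bigr)^{-m} - \bigl(X(\ep) + \tfrac12 F(\ep)\bigr)^{-m} = \int_{-1/2}^{1/2} \sum_{j=1}^m B(s)^{-j} F(\ep) B(s)^{-(m-j+1)} \, ds.
\]
Each summand is of trace class by Proposition \ref{p:sumpro}, and cyclicity collapses all $m$ of the resulting traces to the common value $\T{Tr}[F(\ep) B(s)^{-(m+1)}]$. Multiplying by $\la^m$,
\[
\T{H-Ind}_\la^m(D_+(\ep)) = m \la^m \int_{-1/2}^{1/2} \T{Tr}\bigl[F(\ep) B(s)^{-(m+1)}\bigr] \, ds.
\]
Because the integration interval has length one, it suffices to show that $\T{Tr}[F(\ep)(B(s)^{-(m+1)} - X(\ep)^{-(m+1)})]$ tends to zero as $\ep \searrow 0$, uniformly in $s \in [-\tfrac12, \tfrac12]$.

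\textbf{Error estimate.} Iterating the resolvent identity $B(s)^{-1} - X(\ep)^{-1} = -s B(s)^{-1} F(\ep) X(\ep)^{-1}$ yields
\[
B(s)^{-(m+1)} - X(\ep)^{-(m+1)} = -s \sum_{k=0}^m B(s)^{-(m-k+1)} F(\ep) X(\ep)^{-(k+1)},
\]
exhibiting each error term as quadratic in $F(\ep)$. Write $F(\ep) = 2 \ep \cd h'(\ep \cd) \cd A$. The estimate $\|g (1+\De_1)^{-s/2}\|_{\sL^q} \le C\|g\|_{L^q}$ (valid for $sq > 1$, from the proof of Proposition \ref{p:sumpro}), the scaling $\|h'(\ep \cd)\|_{L^q} = \ep^{-1/q}\|h'\|_{L^q}$, and the tensor splitting in the same proof combine to give
\[
\|F(\ep)(1 + \lap)^{-\alpha}\|_{\sL^q} \le C \ep^{1 - 1/q} \q \T{whenever } q > \max\{1, (p_0+1)/(2\alpha)\}.
\]
Applying H\"older's inequality with conjugate exponents $q_1, q_2 > 1$ satisfying $q_1 > (p_0+1)/(2(m-k+1))$ and $q_2 > (p_0+1)/(2(k+1))$ then bounds the trace norm of each summand by $C \ep^{2 - 1/q_1 - 1/q_2} = C \ep$. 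The hypothesis $m > p_0/2$ is precisely the condition that makes admissible pairs $(q_1,q_2)$ available simultaneously for every $k \in \{0,\ldots,m\}$, and the assumption $\la \ge C$ is used only to keep the constants above uniform in $\ep \in (0,1]$.

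\textbf{Main obstacle.} The chief difficulty is the Schatten estimate of the last display. Although $\|F(\ep)\|_{\T{op}} = O(\ep)$, the support of $h'(\ep \cd)$ has length $O(1/\ep)$, so $F(\ep)$ admits no uniform Schatten bound on its own. The decay $\ep^{1-1/q}$ emerges only from the interplay between the prefactor $\ep$ and the $L^q$ scaling of $h'(\ep \cd)$, and converting this into $O(\ep)$ trace-norm bounds on the quadratic error requires the delicate H\"older allocation above between the two regularising factors $B(s)^{-(m-k+1)}$ and $X(\ep)^{-(k+1)}$.
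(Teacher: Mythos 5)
Your proposal is essentially correct, but it follows a genuinely different route from the paper. You replace the endpoint expansion by a continuous Duhamel parametrisation, writing $(\la + D_-D_+)^{-m} - (\la + D_+D_-)^{-m} = \int_{-1/2}^{1/2}\sum_j B(s)^{-j}F(\ep)B(s)^{-(m-j+1)}\,ds$ with $B(s) := \lap + \al_0(\ep) + sF(\ep) + \la$, and then collapsing under the trace. The paper instead uses the discrete telescoping identity $X^{-m}-Y^{-m}=\sum_j X^{-j}F(\ep)Y^{-(m-j+1)}$ with $X=\lap+\al_-(\ep)+\la$, $Y=\lap+\al_+(\ep)+\la$, cyclicity, and then a three-proposition argument: uniform resolvent bounds (Proposition \ref{p:uniresest}), vanishing of the weighted operator-norm difference $(\lap+1)\big((\lap+\al_\pm(\ep)+\la)^{-1}-(\lap+\al_0(\ep)+\la)^{-1}\big)\to 0$ (Proposition \ref{p:difresvan}, driven by Lemma \ref{l:limderint}), and the uniform trace-norm bound $\|F(\ep)(\lap+1)^{-m-1}\|_1=O(1)$ (Proposition \ref{p:trauniest}). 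In the paper's argument the error is \emph{linear} in the small perturbation $\frac{dh_\ep}{dt}\cd A$ and is closed with one $O(1)$ trace-class factor times a factor that vanishes in operator norm. Your error is \emph{quadratic} in $F(\ep)$ and is closed by a H\"older split where each factor decays like $\ep^{1-1/q}$, giving the sharper quantitative rate $O(\ep)$. Both approaches ultimately exploit the same smallness of $\frac{dh_\ep}{dt}$, but your route sidesteps Proposition \ref{p:difresvan} at the price of more careful Schatten-exponent bookkeeping.

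Two technical points are under-justified and should be tightened. First, the cited bound $\|g(1+\De_1)^{-s/2}\|_{\sL^q}\le C\|g\|_{L^q}$ with $sq>1$ is \cite[Theorem 4.1]{Sim:TIA} and is valid only for $q\ge 2$; in Proposition \ref{p:sumpro} and Proposition \ref{p:trauniest} the regime $q\in[1,2)$ is handled by \cite[Theorem 4.5]{Sim:TIA}, which requires the $\ell^q(L^2)$ norm rather than the $L^q$ norm. For $g=\frac{dh_\ep}{dt}$ (bounded by $\ep\sup|h'|$, supported on an interval of length $\lesssim K/\ep$) both norms scale as $\ep^{1-1/q}$, so the conclusion survives, but since your H\"older split necessarily forces one of $q_1,q_2$ below $2$ once $p_0>1$, this case must be addressed explicitly. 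Second, passing from $\|F(\ep)(\lap+1)^{-a}\|_{\sL^q}$ to $\|F(\ep)B(s)^{-a}\|_{\sL^q}$ (and similarly for $X(\ep)$) requires a uniform operator-norm bound on $(\lap+1)^a B(s)^{-a}$, uniform in $\ep\in(0,1]$ \emph{and} $s\in[-\tfrac12,\tfrac12]$; the paper proves this only for the three endpoints $s\in\{-\tfrac12,0,\tfrac12\}$ in Proposition \ref{p:uniresest}, so you would need to note that the same proof applies verbatim to the convex combination. Finally, your statement that $m>p_0/2$ is ``precisely'' the condition for admissible exponents $(q_1,q_2)$ to exist for all $k$ is slightly overstated: the exponent-counting alone only needs $p_0<2m+3$; the stronger hypothesis $m>p_0/2$ is of course needed elsewhere (Theorem \ref{t:adihomind} and trace-class membership in Proposition \ref{p:sumpro}).
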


We start by providing a couple of estimates which will turn out to be useful later on. The reader should here pay particular attention to Lemma \ref{l:limderint}.

We let $\si : \T{OP}^0(\lap) \to \T{OP}^0(\lap)$ denote the algebra automorphism defined by
\[
\si : T \mapsto (\lap + 1) T (\lap + 1)^{-1}
\]

\begin{lemma}\label{l:supderint}
Let $k,l \in \nn_0$ be given. Then
\[
\sup_{\ep \in (0,1]} \big\| \si^k(\frac{d^l h_\ep}{dt})\big\| < \infty
\]
\end{lemma}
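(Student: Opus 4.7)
The plan is to reduce the estimate to an elementary calculation on the real line by exploiting that a multiplication operator $M_g$ depending only on $t$ commutes with $1 \ot D_2$ (hence with $\De_2$), so that the only relevant commutator is with $D_1^2$, where $D_1 = \T{cl}(i\frac{d}{dt})$.

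First I would establish the combinatorial identity
\[
\si^k(T) \ = \ \sum_{j=0}^k \binom{k}{j}\, \T{ad}^j(T)\, (\lap+1)^{-j},
\]
where $\T{ad}(T) := [\lap,T]$. This follows by an easy induction on $k$ from the basic relation $\si(T) = T + \T{ad}(T)(\lap+1)^{-1}$ together with Pascal's rule. Applied with $T = M_g$ for $g := \frac{d^l h_\ep}{dt}$, and using $\T{ad}(M_g) = [D_1^2, M_g] = -M_{g''} + 2i\, M_{g'} D_1$ together with $\T{ad}(D_1) = 0$, an induction yields
\[
\T{ad}^j(M_g) \ = \ \sum_{b=0}^{j} c_{j,b}\, M_{g^{(2j-b)}}\, D_1^b
\]
for some universal constants $c_{j,b}$ (each application of $\T{ad}$ raises the total of derivative count on $g$ plus the power of $D_1$ by two, while the power of $D_1$ grows by at most one per step).

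Second, I would bound each summand. Since $D_1$ and $\De_2$ are commuting selfadjoint operators with joint spectrum in $\rr \times [0,\infty)$, and since the pointwise inequality $|x|^b \leq (x^2 + \mu + 1)^j$ holds on that set for all $b \leq 2j$, joint functional calculus gives $\|D_1^b (\lap+1)^{-j}\| \leq 1$. Combined with $\|M_{g^{(2j-b)}}\| = \|g^{(2j-b)}\|_\infty$, this yields
\[
\|\si^k(M_g)\| \ \leq \ C_k \cd \max_{0 \leq i \leq 2k} \|g^{(i)}\|_\infty
\]
for some constant $C_k$ depending only on $k$.

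Finally, I would specialise: since $g(t) = \ep^l h^{(l)}(\ep t)$, the $i$-th derivative is $g^{(i)}(t) = \ep^{l+i} h^{(l+i)}(\ep t)$, so for all $\ep \in (0,1]$
\[
\|g^{(i)}\|_\infty \ = \ \ep^{l+i}\, \|h^{(l+i)}\|_\infty \ \leq \ \|h^{(l+i)}\|_\infty,
\]
and the right-hand side is finite because $h$ is smooth with constant asymptotics so that $h^{(j)}$ has compact support for $j \geq 1$ (and $h$ itself is bounded). Taking the supremum over $\ep \in (0,1]$ then produces the desired uniform bound. The only non-routine ingredients are the binomial expansion of $\si^k$ and the norm bound $\|D_1^b(\lap+1)^{-j}\| \leq 1$; both are entirely elementary, so I do not anticipate any serious obstacles.
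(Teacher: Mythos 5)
Your proposal is correct and gives a complete proof. It is organized differently from the paper's, though the underlying mechanism (repeated commutation with $\lap$ produces derivatives of $g$ multiplied by bounded operators, each of which is uniformly small in $\ep$) is the same in spirit.

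The paper argues by a direct induction on $k$, using the one-step recursion
\[
\si^{k+1}\bigl(\tfrac{d^l h_\ep}{dt}\bigr) - \si^{k}\bigl(\tfrac{d^l h_\ep}{dt}\bigr)
= -\si^{k}\bigl(\tfrac{d^{l+2} h_\ep}{dt}\bigr)(\lap+1)^{-1} - 2\,\si^{k}\bigl(\tfrac{d^{l+1} h_\ep}{dt}\bigr)\tfrac{d}{dt}(\lap+1)^{-1},
\]
together with the elementary fact that $\frac{d^l h_\ep}{dt}(t_0) = \ep^l h^{(l)}(\ep t_0)$ to anchor the $k=0$ case. You instead first unroll the iteration into the closed form $\si^k(T)=\sum_{j=0}^k\binom{k}{j}\,\T{ad}^j(T)(\lap+1)^{-j}$, which is legitimate because right multiplication by $(\lap+1)^{-1}$ and $\T{ad}=[\lap,\cdot\,]$ commute and $\si = \T{Id}+\T{ad}\cdot(\lap+1)^{-1}$; you then compute $\T{ad}^j$ explicitly on a multiplication operator (yielding terms $M_{g^{(2j-b)}}D_1^b$, $0\le b\le j$) and absorb the $D_1^b$ factors via the uniform bound $\|D_1^b(\lap+1)^{-j}\|\le 1$ for $b\le 2j$, which follows from $D_1^2 \le \lap+1$ and $\lap+1\ge 1$. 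This is slightly more work up front but gives a transparent, explicit bound $\|\si^k(M_g)\|\le C_k\max_{0\le i\le 2k}\|g^{(i)}\|_\infty$, from which the uniform estimate over $\ep\in(0,1]$ follows immediately as in the paper. Both routes are correct; yours has the advantage of producing a closed-form estimate that makes visible exactly which derivatives of $g$ enter, while the paper's recursion is shorter to state and proves Lemma~\ref{l:limderint} in one stroke by reusing the same two displayed identities.
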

\begin{proof}
The proof runs by induction on $k \in \nn_0$.
For $k = 0$ we note that
\begin{equation}\label{eq:derintfct}
\frac{d^l h_\ep}{dt}(t_0) = \ep^l \cd \frac{d^l h}{dt}(\ep \cd t_0)
\end{equation}
for all $t_0 \in \rr$ and all $l \in \nn_0$. Since $\frac{d^l h}{dt}$ is smooth and bounded this proves the statement in this case.

Suppose thus that the statement of the lemma holds for some $k \in \nn_0$. A basic computation then shows that
\begin{equation}\label{eq:indintfct}
\begin{split}
& \si^{k + 1}\big( \frac{d^l h_\ep}{dt} \big) - \si^k\big( \frac{d^l h_\ep}{dt} \big) \\
& \q = - \si^k\big( \frac{d^{l + 2} h_\ep}{dt} \big)(\lap + 1)^{-1} 
- 2 \si^k \big( \frac{d^{l + 1} h_\ep}{dt} \big) \frac{d}{dt} \cd (\lap + 1)^{-1}
\end{split}
\end{equation}
for all $l \in \nn_0$. This computation proves the induction step.
\end{proof}

\begin{lemma}\label{l:limderint}
Let $k \in \nn_0$ and let $l \in \nn$. Then
\[
\lim_{\ep \searrow 0} \big\| \si^k(\frac{d^l h_\ep}{dt}) \big\| = 0
\]
\end{lemma}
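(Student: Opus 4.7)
My plan is to mimic the inductive proof of Lemma \ref{l:supderint}, using the stronger base-case bound that is available when $l \geq 1$.

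For the base case $k = 0$, the identity \eqref{eq:derintfct} gives
\[
\frac{d^l h_\ep}{dt}(t_0) = \ep^l \cd \frac{d^l h}{dt}(\ep \cd t_0),
\]
so as a multiplication operator on $H$ its norm is bounded by $\ep^l \cd \|\frac{d^l h}{dt}\|_\infty$. Since $l \geq 1$ and $\frac{d^l h}{dt}$ is a bounded function (as $h$ is smooth with constant asymptotics), this tends to $0$ as $\ep \searrow 0$. This is precisely the place where the hypothesis $l \geq 1$ (rather than $l \geq 0$) is used.

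For the inductive step, assume the statement holds at level $k$ for every $l \in \nn$. The identity \eqref{eq:indintfct} reads
\[
\si^{k+1}\big( \tfrac{d^l h_\ep}{dt}\big)
= \si^k\big( \tfrac{d^l h_\ep}{dt}\big)
- \si^k\big( \tfrac{d^{l+2} h_\ep}{dt}\big) (\lap + 1)^{-1}
- 2 \si^k\big( \tfrac{d^{l+1} h_\ep}{dt}\big) \tfrac{d}{dt}(\lap + 1)^{-1}.
\]
The bounded operators $(\lap + 1)^{-1}$ and $\frac{d}{dt}(\lap + 1)^{-1}$ have norms independent of $\ep$. Since $l, l+1, l+2 \in \nn$, the inductive hypothesis applies to all three summands, and each tends to $0$ in operator norm as $\ep \searrow 0$. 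This completes the induction.

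There is essentially no obstacle beyond bookkeeping: the work has already been done in Lemma \ref{l:supderint}, and the only new input is that differentiating $h_\ep$ at least once produces the factor $\ep^l$ which gives convergence to zero rather than mere boundedness. The hypothesis $l \geq 1$ is preserved by the inductive step because all derivatives appearing on the right of \eqref{eq:indintfct} are of order at least $l \geq 1$.
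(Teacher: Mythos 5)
Your proof is correct and follows the paper's own argument exactly: induction on $k$, with the base case coming from the scaling identity \eqref{eq:derintfct} (where $l \geq 1$ gives the factor $\ep^l$) and the inductive step from \eqref{eq:indintfct}, noting that all derivatives appearing on the right are of order at least $l \geq 1$. The paper's proof is simply a more terse version of the same reasoning.
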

\begin{proof}
The proof is again by induction on $k \in \nn_0$.
The induction start follows by the identity in \eqref{eq:derintfct} since $l \geq 1$.
The induction step then follows by the identities in \eqref{eq:indintfct}.
\end{proof}

\begin{lemma}\label{l:supderalp}
Let $k \in \nn_0$. Then
\[
\sup_{\ep \in (0,1]} \big\| \si^k\big( [\lap,\al_+(\ep)](\lap + 1)^{-1}\big)\big\| < \infty
\]
The same result holds when $\al_+(\ep)$ is replaced by $\al_-(\ep)$ or by $\al_0(\ep)$.
\end{lemma}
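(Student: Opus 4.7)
The plan is to reduce the claim to uniform bounds on iterated $\lap$-commutators of $\al_\pm(\ep)$ and then to verify those bounds using the explicit structure of $\al_\pm(\ep)$ together with Lemma \ref{l:supderint}. First, since $(\lap+1)^{-1}$ commutes with $\lap$, a straightforward induction on $k$ yields the identity
\[
\si^k(T) = \sum_{j = 0}^k \binom{k}{j} \T{ad}_\lap^j(T) \cd (\lap + 1)^{-j}
\]
for every bounded operator $T$ preserving $H^\infty$, where $\T{ad}_\lap(S) := [\lap, S]$. Applying this with $T := [\lap, \al_\pm(\ep)](\lap+1)^{-1}$ and pulling one factor of $\T{ad}_\lap$ through $(\lap+1)^{-1}$ gives
\[
\si^k\big([\lap, \al_\pm(\ep)](\lap+1)^{-1}\big) = \sum_{j = 0}^k \binom{k}{j} \T{ad}_\lap^{j+1}(\al_\pm(\ep)) \cd (\lap + 1)^{-j-1}.
\]
So it suffices to show that $\sup_{\ep \in (0,1]} \| \T{ad}_\lap^m(\al_\pm(\ep)) (\lap+1)^{-m}\| < \infty$ for each $m \in \{1, \ldots, k+1\}$.

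Second, I would expand $\al_+(\ep) = T_1(\ep) + T_+(\ep)$ (and similarly for $\al_-$ and $\al_0$) into its defining pieces and distribute $\T{ad}_\lap^m$ by the Leibniz rule, using the formal decomposition $\lap = -(d/dt)^2 \ot 1 + 1 \ot \De_2$. Every resulting monomial is a product in which each application of $[\lap, \cd]$ has either (a) differentiated one of the cut-off factors $h_\ep$, $h_\ep^2$, or $dh_\ep/dt$, producing a higher $t$-derivative that is uniformly bounded in $\ep \in (0,1]$ by Lemma \ref{l:supderint} applied with $k = 0$, or (b) acted on $A$ via $[\De_2, \cd]$, which in the Connes-Moscovici calculus produces iterated commutators of the form $\de_2^j(A)$, all of which extend to bounded operators by Assumption \ref{a:psesum}. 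Each such monomial lies in $\T{OP}^{m+1}(\lap)$, and after multiplication by $(\lap+1)^{-m}$ its norm is controlled uniformly in $\ep$. Summing finitely many such contributions yields the desired uniform bound for $\al_+(\ep)$.

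Third, the corresponding statements for $\al_-(\ep)$ and $\al_0(\ep)$ follow immediately, since these differ from $\al_+(\ep)$ by a scalar multiple of the order-zero term $(dh_\ep/dt) \cd A$, whose $\lap$-commutators are controlled by exactly the same combination of Lemma \ref{l:supderint} and Assumption \ref{a:psesum}. The main obstacle will be the combinatorial bookkeeping in the Leibniz expansion of $\T{ad}_\lap^m(\al_\pm(\ep))$: one has to confirm that every Leibniz term picks up either an extra $h_\ep$-derivative or an extra $\de_2$-commutator of $A$ per application of $\T{ad}_\lap$, so that the total $\lap$-order of the monomial never exceeds $m+1$ and the absorbing factor $(\lap+1)^{-m}$ suffices to render each term bounded with norm independent of $\ep$.
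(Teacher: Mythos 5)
Your overall strategy — expand $\si^k$ combinatorially, distribute iterated $\lap$-commutators via Leibniz, and control the $\ep$-dependence through Lemma \ref{l:supderint} — is in the same spirit as the paper's proof, but it packages the argument differently, and the difference turns out to matter. The paper's proof does not expand $\si^k$ at all. Instead it decomposes $[\lap,\al_+(\ep)](\lap+1)^{-1}$ once via the Leibniz rule, isolates the $\ep$-dependent pieces as the four expressions $h_\ep$, $\frac{dh_\ep}{dt}$, $[\lap,h_\ep](\lap+1)^{-1/2}$ and $[\lap,\frac{dh_\ep}{dt}](\lap+1)^{-1/2}$, and then uses that $\si$ is an algebra automorphism to reduce everything to the statement of Lemma \ref{l:supderint} \emph{for all} $k \in \nn_0$. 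That is, the paper really does need the full strength of Lemma \ref{l:supderint}, not just the $k=0$ case, and applies it directly.

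Your binomial identity $\si^k(T) = \sum_{j=0}^k \binom{k}{j}\T{ad}_\lap^j(T)(\lap+1)^{-j}$ is correct, and pulling the commutator through does give $\si^k([\lap,\al_\pm(\ep)](\lap+1)^{-1}) = \sum_j \binom{k}{j}\T{ad}_\lap^{j+1}(\al_\pm(\ep))(\lap+1)^{-j-1}$. But the step you flag yourself — that "the absorbing factor $(\lap+1)^{-m}$ suffices to render each term bounded with norm independent of $\ep$" — is precisely where the work lies, and it is not justified by Lemma \ref{l:supderint} at $k=0$ alone. When you bound a Leibniz monomial $\T{ad}_\lap^{i_1}(x_1)\cdots\T{ad}_\lap^{i_r}(x_r)(\lap+1)^{-m}$, you must push resolvent powers past the various factors to match orders, and in general this generates $\si^j$-conjugations of the individual pieces (with $j>0$). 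In the present setup this can be avoided by explicitly exploiting that the $h_\ep$-differential factors commute with the $A$- and $D_2$-factors (they act on separate tensor legs), so all the $\T{ad}_\lap^i(h_\ep)$-factors can be moved to the front and absorbed against $(1+\De_1)^{-i/2}$, while the $\ep$-independent $\De_2$-factors absorb the remaining resolvent power. That commuting step is the missing content in your proof. Two further imprecisions: (i) your part (b) says $[\De_2,\cdot]$ produces $\de_2^j(A)$, but Assumption \ref{a:psesum} controls $\de_2 = [\De_2^{1/2},\cdot]$, so you need $[\De_2,A] = \De_2^{1/2}\de_2(A) + \de_2(A)\De_2^{1/2}$, which raises the order by one; (ii) you should note that $[\lap, D_2]=0$ so $\T{ad}_\lap$ never hits the bare $D_2$ in $T_1(\ep)$. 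In short, the approach is sound in outline but the uniformity in $\ep$ is asserted rather than proved; to close the gap you should either carry out the commuting rearrangement explicitly, or do as the paper does and invoke the multiplicativity of $\si$ together with the full Lemma \ref{l:supderint}.
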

\begin{proof}
We will focus on the case of $\al_+(\ep)$ since the proof is similar in the two remaining cases.

We start by noticing that
\[
\begin{split}
[\lap, \al_+(\ep)](\lap + 1)^{-1} 
& = \Big( [\lap, h_\ep A] D_2 
+ D_2 [\lap, h_\ep A] \\
& \qq + [\lap, \frac{d h_\ep}{dt} \cd A]
+ [\lap,h_\ep^2 A^2] \Big) (\lap + 1)^{-1}
\end{split}
\]
Since $A \in \T{OP}^0(\lap)$ is independent of $\ep \in (0,1]$ it therefore suffices to show that
\[
\begin{split}
& \sup_{\ep \in (0,1]} \big\| \si^k([\lap,h_\ep] (\lap + 1)^{-1/2}) \big\| < \infty
\, \, , \, \,\,
\sup_{\ep \in (0,1]} \big\| \si^k(h_\ep) \big\| < \infty 
\, \, , \, \,\, \\
& \sup_{\ep \in (0,1]} \big\| \si^k(\frac{d h_\ep}{dt}) \big\| < \infty
\, \, , \, \,\,
\sup_{\ep \in (0,1]} \big\| \si^k\big([ \lap, \frac{d h_\ep}{dt}] (\lap + 1)^{-1/2}\big) \big\| 
< \infty
\end{split}
\]
for each $k \in \nn_0$. But this follows easily from Lemma \ref{l:supderint}.
\end{proof}

With these basic estimates in hand we start estimating resolvents of pseudodifferential operators.

\begin{prop}\label{p:uniresest}
There exists a constant $C > 0$ such that
\[
\sup_{\ep \in (0,1]} \big\| \si^k \big( (\lap + 1) (\lap + \al_+(\ep) + \la)^{-1} \big) \big\| < \infty
\]
for all $k \in \nn_0$ and all $\la \geq C$.

A similar estimate holds when $\al_+(\ep)$ is replaced by $\al_-(\ep)$ or by $\al_0(\ep)$.
\end{prop}
\begin{proof}
We will again focus exclusively on the case of $\al_+(\ep)$ since the proof will carry through with only minor changes in the remaining cases.

Remark first that
\[
\begin{split}
\sup_{\ep \in (0,1]} \big\| \al_+(\ep) (\lap + 1)^{-1/2}\big\|
& \leq \sup_{\ep \in (0,1]} \Big(
\| h_\ep A \cd D_2 (\lap + 1)^{-1/2} \| + \| h_\ep D_2 A (\lap + 1)^{-1/2} \| \Big) \\
& \q + \sup_{\ep \in (0,1]} \Big( 
\| h_\ep^2 A^2 \| + \| \frac{d h_\ep}{dt} \cd A \| 
\Big) \\ 
& < \infty
\end{split}
\]
We may thus choose a constant $C \geq 1$ such that
\[
\sqrt{C}/2 > \sup_{\ep \in (0,1]} \big\| \al_+(\ep) (\lap + 1)^{-1/2}\big\|
\]

Let now $\la \geq C$ be fixed. As in the proof of Proposition \ref{p:resestpow} we then obtain that
\[
(\lap + 1) (\lap + \al_+(\ep) + \la)^{-1} = (\lap + 1)(\lap + \la)^{-1} \cd
(1 + \al_+(\ep)(\lap + \la)^{-1})^{-1}
\]
for all $\ep \in (0,1]$ and all $\la \geq C$. It is therefore sufficient to show that
\begin{equation}\label{eq:sufcon}
\sup_{\ep \in (0,1]} \big\| \si^k \big( (1 + \al_+(\ep)(\lap + \la)^{-1})^{-1} \big) \big\| < \infty
\end{equation}
for all $k \in \nn_0$. This will follow by induction on $k \in \nn_0$.

For $k = 0$ we may compute as follows,
\[
\begin{split}
& \big\| (1 + \al_+(\ep)(\lap + \la)^{-1})^{-1} \big\|
\leq \sum_{j = 0}^\infty \| \al_+(\ep) (\lap + \la)^{-1} \|^j \\
& \q \leq \sum_{j = 0}^\infty \| \al_+(\ep) (\lap + 1)^{-1/2} \|^j \cd \la^{-j/2}
\leq \sum_{j = 0}^\infty (C / \la)^{j/2} \cd (1/2)^j \leq 2
\end{split}
\]
regardless of $\ep \in (0,1]$. This computation thus proves the induction start.

Suppose now that the estimate in \eqref{eq:sufcon} holds for some $k \in \nn_0$. We then compute as follows,
\[
\begin{split}
& \si^{k + 1}\big( (1 + \al_+(\ep)(\lap + \la)^{-1})^{-1} \big) -
\si^k \big( (1 + \al_+(\ep)(\lap + \la)^{-1})^{-1} \big) \\
& \q = \si^k\Big( \big[\lap, \big(1 + \al_+(\ep)(\lap + \la)^{-1}\big)^{-1}\big] (\lap + 1)^{-1}\Big) \\
& \q = - \si^k\Big( \big(1 + \al_+(\ep)(\lap + \la)^{-1}\big)^{-1} \Big)
\cd \si^k\big( [\lap,\al_+(\ep)] (\lap + \la)^{-1}\big) \\
& \qqq \cd \si^k\Big( \big(1 + \al_+(\ep)(\lap + \la)^{-1}\big)^{-1} \Big) \cd (\lap + 1)^{-1}
\end{split}
\]
for all $\ep \in (0,1]$. This computation proves the induction step by an appeal to Lemma \ref{l:supderalp}.
\end{proof}

In the next result we investigate what happens to certain differences of resolvents when computing in the adiabatic limit where the rescaling parameter $\ep \in (0,1]$ tends to zero.

\begin{prop}\label{p:difresvan}
There exists a constant $C > 0$ such that
\[
\lim_{\ep \searrow 0 } \big\| \si^k\big( (\lap + 1)(\lap + \al_+(\ep) + \la)^{-1}
- (\lap + 1)(\lap + \al_0(\ep) + \la)^{-1} \big)
 \big\| = 0
\]
whenever $k \in \nn_0$ and $\la \geq C$. The same result holds with $\al_-(\ep)$ instead of $\al_+(\ep)$.
\end{prop}
\begin{proof}
As usual we will focus on the case of $\al_+(\ep)$ leaving the other case to the reader.
Let us choose the constant $C > 0$ such that Proposition \ref{p:uniresest} holds for both $\al_+(\ep)$ and $\al_0(\ep)$.

Let $\la \geq C$ be fixed. The resolvent identity then yields that
\[
\begin{split}
& (\lap + 1)\big( (\lap + \al_+(\ep) + \la)^{-1} - (\lap + \al_0(\ep) + \la)^{-1}\big) \\
& \q = - (\lap + 1)(\lap + \al_+(\ep) + \la)^{-1} \cd ( T_+(\ep) - T_0(\ep) ) \cd (\lap + \al_0(\ep) + \la)^{-1} \\
& \q = - (\lap + 1)(\lap + \al_+(\ep) + \la)^{-1} \cd \frac{d h_\ep}{dt} \cd A (\lap + \al_0(\ep) + \la)^{-1}
\end{split}
\]
An application of Proposition \ref{p:uniresest} then shows that we can restrict ourselves to proving that
\[
\lim_{\ep \searrow 0} \big\| \si^k\big( \frac{d h_\ep}{dt} \big)\big\| = 0
\]
for all $k \in \nn_0$. But this is a consequence of Lemma \ref{l:limderint}.
\end{proof}

The last result, which we need in order to supply a proof of Theorem \ref{t:firappind}, provides a trace norm estimate on the product of a resolvent and a compactly supported operator valued map.

\begin{prop}\label{p:trauniest}
Let $m  \in \nn$ with $m > p_0/2$ be given. Then
\[
\sup_{\ep \in (0,1] } \big\| F(\ep) \cd (\lap + 1)^{-m-1} \big\|_1 < \infty
\]
\end{prop}
\begin{proof}
As in the proof of Proposition \ref{p:sumpro} we compute that
\[
F(\ep) \cd (\lap + 1)^{-m-1} = 2 A (1 + \De_2)^{-\frac{(m+1)p_0}{p_0+1}} \cd \frac{d h_\ep}{dt} (1 + \De_1)^{-\frac{m+1}{p_0+1}}
\cd G_1^{\frac{m+1}{p_0+1}} G_2^{\frac{(m+1)p_0}{p_0+1}}
\]
for all $\ep \in (0,1]$, where $G_1 = (1 + \De_1)(\lap + 1)^{-1}$ and $G_2 = (1 + \De_2)(\lap + 1)^{-1}$.

It therefore suffices to prove that
\[
\sup_{\ep \in (0,1]}\big\| \frac{dh_\ep}{dt} \cd (1 + \De_1)^{-q}\big\|_1 < \infty
\]
for a fixed real number $q > 1/2$, where $\| \cd \|_1 : \sL^1(L^2(\rr)) \to [0,\infty)$ denotes the trace norm associated with the Hilbert space $L^2(\rr)$.

Let $\ep \in (0,1]$ be fixed and recall from \eqref{eq:conasy} that there exists a $K \in \nn$ such that
\[
\T{supp}(\frac{dh}{dt}) \su [-K,K]
\]
where $\T{supp}( \cd )$ takes the support of a function. It follows that the derivative of the rescaled function $h_\ep : \rr \to \rr$ satisfies
\[
\T{supp}(\frac{d h_\ep}{dt}) \su [-K/\ep,K/\ep]
\]
for all $\ep \in (0,1]$.

It is therefore a consequence of \cite[Theorem 4.5]{Sim:TIA} that there exists a constant $C > 0$, which is independent of $\ep \in (0,1]$, such that
\[
\big\| \frac{d h_\ep}{dt} \cd (1 + \De_1)^{-q} \big\|_1 
\leq C \cd \sup_{t_0 \in \rr}\big|  \frac{dh_\ep}{dt}(t_0) \big| \cd 2 (K/\ep + 1)
\]
To continue, we note that
\[
\sup_{t_0 \in \rr} \big| \frac{dh_\ep}{dt}(t_0) \big| = \ep \cd \sup_{t_0 \in \rr} \big| \frac{dh}{dt}(t_0)\big|
\]
We therefore obtain that
\[
\big\| \frac{d h_\ep}{dt} \cd (1 + \De_1)^{-q} \big\|_1 
\leq C \cd \sup_{t_0 \in \rr}\big|  \frac{dh}{dt}(t_0) \big| \cd 2 (K + \ep)
\]
This estimate proves the lemma.
\end{proof}

We are now ready to prove the main result of this Section. It provides a good first approximation to the homological index and shows how our invariance result in the rescaling parameter $\ep > 0$ plays an important role. We restate the result here for the convenience of the reader:

\begin{thm}
Let $m \in \nn$ with $m > p_0/2$ be given. There exists a constant $C > 0$ such that
\[
\T{H-Ind}_\la^m(D_+)
= \lim_{\ep \searrow 0} \T{Tr}\Big( m \cd \la^m \cd F(\ep) \cd \big( \lap + \al_0(\ep) + \la \big)^{-m-1} \Big)
\]
for all $\la \geq C$.
\end{thm}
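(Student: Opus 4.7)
The plan combines the adiabatic invariance of the homological index, a telescoping identity for differences of resolvent powers, and the resolvent identity, followed by a trace-norm estimate of the resulting error terms.

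By Theorem \ref{t:adihomind} and the identities \eqref{eq:basideeps}, for every $\ep \in (0,1]$
\[
\T{H-Ind}_\la^m(D_+) = \la^m \cd \T{Tr}\big(R_-(\ep)^{\,m} - R_+(\ep)^{\,m}\big),
\]
where $R_\pm(\ep) := (\lap + \al_\pm(\ep) + \la)^{-1}$; I also set $R_0(\ep) := (\lap + \al_0(\ep) + \la)^{-1}$. The algebraic telescoping identity $A^{-m} - B^{-m} = \sum_{j=1}^m A^{-j}(B-A) B^{-(m+1-j)}$ combined with $\al_+(\ep) - \al_-(\ep) = F(\ep)$ gives
\[
R_-(\ep)^{\,m} - R_+(\ep)^{\,m} = \sum_{j=1}^m R_-(\ep)^{\,j}\, F(\ep)\, R_+(\ep)^{\,m+1-j}.
\]
Applying the resolvent identity $R_\pm(\ep) - R_0(\ep) = \mp R_\pm(\ep) (F(\ep)/2) R_0(\ep)$ (which follows from $\al_\pm - \al_0 = \pm F/2$) once to each occurrence of $R_\pm$ decomposes the right-hand side into a principal sum $\sum_{j=1}^m R_0(\ep)^{\,j} F(\ep) R_0(\ep)^{\,m+1-j}$ plus $2^{m+1} - 1$ error terms; the error term in which $k \geq 1$ of the $R_\pm$-factors have been expanded contains exactly $k+1$ factors of $F(\ep)$ and $m+1+k$ resolvent factors.

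The principal sum has trace equal to $m \cd \T{Tr}\big(F(\ep) R_0(\ep)^{\,m+1}\big)$ by cyclicity of the trace, which applies because $F(\ep) R_0(\ep)^{\,m+1}$ is uniformly trace class: writing it as $F(\ep) (\lap+1)^{-(m+1)} \cd (\lap+1)^{m+1} R_0(\ep)^{\,m+1}$, the first factor has uniformly bounded trace norm by Proposition \ref{p:trauniest}, and iterating Proposition \ref{p:uniresest} delivers a uniform operator-norm bound on the second factor provided $\la$ exceeds the constant furnished there (this fixes the constant $C$ in the statement).

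The final step is to show that each error term has trace norm $O(\ep^k) = O(\ep)$ as $\ep \searrow 0$. Using $T(\lap+1)^{-l} = (\lap+1)^{-l} \si^l(T)$ repeatedly, together with the identity $R^l = (\lap+1)^{-l} \si^{l-1}(S) \cdots \si(S) S$ (where $S := (\lap+1) R$, which follows from $S(\lap+1)^{-1} = (\lap+1)^{-1} \si(S)$), one can push all factors of $(\lap+1)^{-1}$ to the far left, putting the error term in the form
\[
(\lap+1)^{-(m+1+k)} \cd \si^{b_0}(F(\ep)) \cd W_1 \cd \si^{b_1}(F(\ep)) \cd W_2 \cdots \si^{b_k}(F(\ep)) \cd W_{k+1},
\]
in which each $W_i$ is a product of $\si^a(S)$-factors with operator norm uniformly bounded in $\ep$ by Proposition \ref{p:uniresest}, and $0 \leq b_i \leq m+1+k$. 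Using commutativity of powers of $(\lap+1)$, the leading piece $(\lap+1)^{-(m+1+k)} \si^{b_0}(F(\ep))$ can be rewritten in the form $(\lap+1)^{-r/2} F(\ep) (\lap+1)^{-s/2}$ with $r + s = 2(m+1+k)$, and is therefore uniformly trace class by Proposition \ref{p:sumpro} (since $m > p_0/2$ forces $r + s > p_0 + 1$). Each of the remaining $k \geq 1$ factors $\si^{b_i}(F(\ep))$ has operator norm tending to zero by Lemma \ref{l:limderint}, because $F(\ep) = 2 (dh_\ep/dt) A$ contains a first derivative of $h_\ep$. The main technical obstacle is this $\si$-bookkeeping: confirming that every error term admits a decomposition with one uniformly trace-class subproduct and an operator-norm-bounded remainder that retains at least one $\si^{b_i}(F(\ep))$-factor of norm $O(\ep)$, thereby yielding the required vanishing in the limit.
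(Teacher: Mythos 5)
Your proposal starts the same way the paper does: invoke Theorem~\ref{t:adihomind} and the identities~\eqref{eq:basideeps}, then apply the telescoping formula to reduce to $\sum_{j=1}^m\T{Tr}\big(R_-(\ep)^{j}F(\ep)R_+(\ep)^{m+1-j}\big)$. From there you take a genuinely different route. You substitute the resolvent identity $R_\pm = R_0 \mp R_\pm(F(\ep)/2)R_0$ into \emph{every} resolvent factor at once, producing a principal sum and $2^{m+1}-1$ error terms, each carrying extra factors of $F(\ep)$, and you argue that every error term dies in trace norm because it retains at least one $\si^{b_i}(F(\ep))$ factor whose operator norm vanishes as $\ep \searrow 0$. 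The paper avoids this combinatorial explosion entirely: for each fixed $j$ it shows directly that $\|F(\ep)R_+(\ep)^{m-j+1}R_-(\ep)^{j} - F(\ep)R_0(\ep)^{m+1}\|_1 \to 0$, delegating all estimates to Propositions~\ref{p:uniresest} (uniform bounds on $\si^k((\lap+1)R_a)$), \ref{p:difresvan} (vanishing of $\si^k((\lap+1)(R_\pm - R_0))$), and~\ref{p:trauniest} (uniform trace-class bound on $F(\ep)(\lap+1)^{-m-1}$). Because at most one $F(\ep)$ factor appears at any time, the $\si$-bookkeeping you flag as ``the main technical obstacle'' simply never arises in the paper's organization.

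Two concrete gaps remain in your sketch. First, you invoke Proposition~\ref{p:sumpro} to justify that $(\lap+1)^{-r/2}F(\ep)(\lap+1)^{-s/2}$ is \emph{uniformly} trace class. That proposition only asserts membership in $\sL^1$ for each fixed $g = 2\,dh_\ep/dt$ and says nothing about how the trace norm depends on $\ep$; as $\ep \searrow 0$ the support of $dh_\ep/dt$ spreads out to $[-K/\ep,K/\ep]$, and the needed cancellation against the factor $\ep$ in $\sup|dh_\ep/dt|$ is exactly the content of the proof of Proposition~\ref{p:trauniest} (which treats only the case $a = m+1$, $b = 0$). To run your argument you would need a two-sided generalization, $\sup_\ep\|(\lap+1)^{-a}F(\ep)(\lap+1)^{-b}\|_1 < \infty$ for $a+b > m+1$, which is plausible by the same Laplacian-splitting but is not something you can simply cite. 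Second, you explicitly leave the $\si$-bookkeeping unfinished: the claim that \emph{every} error term decomposes into a uniformly trace-class subproduct times a norm-bounded remainder that retains a vanishing $\si^{b_i}(F(\ep))$-factor. This is believable (the bounds on the $W_i$ do follow from Proposition~\ref{p:uniresest} applied to all three of $\al_\pm$, $\al_0$, and the vanishing follows from Lemma~\ref{l:limderint} together with $\si$ being multiplicative), but it needs to be carried through. Your approach can be made rigorous, but it is substantially more demanding than the paper's, and as written, the proof is not closed.
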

\begin{proof}
Let us choose the constant $C > 0$ such that Proposition \ref{p:uniresest} holds for $\al_+(\ep)$, $\al_-(\ep)$ and $\al_0(\ep)$ and furthermore such that Proposition \ref{p:difresvan} holds for $\al_+(\ep)$ and $\al_-(\ep)$.

Let $\la \geq C$.
By the definition of the homological index, by Theorem \ref{t:adihomind}, and by the basic identities in \eqref{eq:basideeps} we have that
\[
\begin{split}
& \T{H-Ind}_\la^m(D_+) \\
& \q = \la^m \cd \lim_{\ep \searrow 0} \T{Tr} \big( (\la + D_-(\ep) D_+(\ep))^{-m} - (\la + D_+(\ep) D_-(\ep))^{-m} \big) \\
& \q = \la^m \cd \lim_{\ep \searrow 0} \Big( \sum_{j = 1}^m \T{Tr}\big( (\lap + \al_-(\ep) + \la)^{-j}  F(\ep)  (\lap + \al_+(\ep) + \la)^{-m+j-1} \big) \Big) \\
& \q = \la^m \cd \lim_{\ep \searrow 0} \Big( \sum_{j = 1}^m 
\T{Tr}\big( F(\ep) \cd (\lap + \al_+(\ep) + \la)^{-m+j-1} \cd (\lap + \al_-(\ep) + \la)^{-j} \big) \Big)
\end{split}
\]

Let thus $j \in \{1,\ldots,m\}$ be fixed. It then suffices to show that
\[
\begin{split}
& 0 = \lim_{\ep \searrow 0}\big\| F(\ep) \cd (\lap + \al_+(\ep) + \la)^{-m+j-1} \cd (\lap + \al_-(\ep) + \la)^{-j} \\
& \qqq - F(\ep) \cd (\lap + \al_0(\ep) + \la)^{-m-1} \big\|_1
\end{split}
\]
But this is now a consequence of Proposition \ref{p:uniresest}, Proposition \ref{p:difresvan}, and Proposition \ref{p:trauniest}. 
\end{proof}

\section{Resolvent expansions of the homological index}\label{s:resexphom}
The starting point for this Section is the formula for the homological index:
\begin{equation}\label{eq:firapp}
\T{H-Ind}_\la^m(D_+) = m \cd \la^m \cd \lim_{\ep \searrow 0} \T{Tr}\big(
F(\ep) \cd \big( \lap + \al_0(\ep) + \la \big)^{-m-1} \big)
\end{equation}
which we obtained in Section \ref{s:firapphom}. For a given $\ep \in (0,1]$, it is now our aim to find an explicit power-series expansion of the integrand
\[
F(\ep) \cd \big( \lap + \al_0(\ep) + \la \big)^{-m-1}
\]
The preliminary form of this expansion will mainly rely on the results in Appendix \ref{s:holpseope} and Appendix \ref{s:tranorhol}. In the next subsection we shall then give the more explicit form of the individual terms in our expansion.

We recall from \eqref{eq:pseone} and \eqref{eq:psezer} that
\[
\al_0(\ep) = T_1(\ep) + T_0(\ep) : \sD( T_1(\ep)) \to H
\]
where $T_0(\ep) := h_\ep^2 A^2$ and $T_1(\ep) : \sD(T_1(\ep)) \to L^2(\rr,H_2)$ is the closure of the anti-commutator
\[
D_2 h_\ep A + h_\ep A D_2 : H^\infty \to H
\]

In order to compute the integrand in \eqref{eq:firapp} we now introduce an extra parameter. Thus, for each $z \in \cc$ and each $\ep \in (0,1]$ we define
\[
\al_0(\ep,z) := z \cd T_1(\ep) + z^2 \cd T_0(\ep) : \sD(T_1(\ep)) \to L^2(\rr,H_2)
\]
and it follows immediately that $\al_0(\ep, \cd) : \cc \to \T{OP}^1(\lap)$ defines a holomorphic family in the sense of Definition \ref{d:holfampse} (the compact Hausdorff space $X$ is here just a point). Furthermore, since $m > p_0/2$ we know that
\[
F(\ep) \cd (\lap + 1)^{-m-1} \in \sL^1( L^2(\rr,H_2)) 
\]
Finally, we clearly have that
\[
\sup_{\ep \in (0,1] \, , \, \, z \in \B B_2(0)} \| \al_0(\ep,z) (\lap + 1)^{-1/2} \| < \infty
\]

We may thus apply Proposition \ref{p:tranorpow} to obtain the following result:

\begin{prop}\label{p:resexphom}
Let $m \in \nn$ with $m > p_0/2$ be given and let $\la \geq 1$ be a constant with
\[
\sqrt{\la} > \sup_{\ep \in (0,1] \, , \, \, z \in \B B_2(0)} \| \al_0(\ep,z) (\lap + 1)^{-1/2} \|
\]
Then, for each $\ep \in (0,1]$, we have that
\[
F(\ep) \cd \big(\lap + \al_0(\ep) + \la \big)^{-m-1} 
= \sum_{l = 0}^\infty \frac{1}{l!} F(\ep) \cd \frac{d^l}{dz}\big( \lap + \al_0(\ep, \cd ) + \la \big)^{-m-1} \big|_{z = 0}
\]
where the sum converges absolutely in trace norm.
\end{prop}

In the next subsection we will derive a more tangible expression for the derivatives
\[
\frac{d^l}{dz}\big( \lap + \al_0(\ep, \cd )  + \la \big)^{-m-1} \big|_{z = 0}
\]
which appear in our resolvent expansion. We will derive this expression in a more abstract context since the more general result may be useful elsewhere.

\subsection{Higher derivatives of resolvents}\label{s:higderres}
Throughout this Section we let $H$ be a separable Hilbert space and $\De : \sD(\De) \to H$ be a positive unbounded operator. On top of this data, we will consider two pseudodifferential operators $T_1 \in \T{OP}^1(\De)$ and  $T_0 \in \T{OP}^0(\De).$ Finally, we let $U \su \cc$ be a bounded open neighborhood of $0 \in \cc$.

We define the holomorphic map
\begin{equation}\label{eq:alphol}
\al : U \to \T{OP}^1(\De) \q \al(z) := z T_1 + z^2 T_0 : \sD(T_1) \to H
\end{equation}
and we fix a $\la > 0$ such that the resolvent
\[
R_\la := (\De + \al + \la )^{-1} : U \to \T{OP}^{-2}(\De)
\]
is well-defined and holomorphic, see Lemma \ref{l:invhol}.

Let us also define the derivatives
\[
\frac{d \al}{dz}\big|_z := T_1 + 2 z T_0 \q \M{and} \q \frac{d^2 \al}{dz} \big|_z := 2 T_0
\]
for all $z \in U$. All the higher derivatives are trivial.

\noindent \emph{In this Section we are interested in computing the higher derivatives of an arbitrary power of the resolvent, thus the holomorphic map
\[
\frac{d^l}{dz} R_\la^{m + 1} : U \to \sL(H)
\]
for each $m \in \nn_0$ and each $l \in \nn$.}

For each $j \in \nn$ we let $\cc[\nn^j]$ denote the vector space with a standard basis vector $\de_K$ for each $K \in \nn^j$. For $j = 0$ we define $\cc[\nn^0] := \cc \de_\emptyset := \cc$.

We start with a convenient definition:

\begin{dfn}\label{d:condefone}
For each $j \in \nn$ and each $K \in \nn^j$ we define the holomorphic operator valued map
\[
\begin{split}
\inn{R_\la^{m+1}; \de_K} & := \sum_{M \in \nn^{j+1}, \, |M| = m+ 1 + j} (-1)^j \cd R_\la^{m_1} \cd \frac{d^{k_1} \al}{dz} R_\la^{m_2} \clc \frac{d^{k_j} \al}{dz} R_\la^{m_{j+1}} \\
& \q : U \to \sL(H)
\end{split}
\]
For $j = 0$, define $\inn{R_\la^{m+1}; \de_\emptyset} := R_\la^{m+1}$.
By linearity we obtain a holomorphic operator valued map
$
\inn{R_\la^{m+1}, \Te} : U \to \sL(H)
$
for each $j \in \nn_0$ and each $\Te \in \cc[\nn^j]$.
\end{dfn}

Let us now compute the first derivative of the map defined above. We introduce the operations
\begin{equation}\label{eq:comideder}
\begin{split}
& s : \cc[\nn^j] \to \cc[\nn^{j+1}] \q s : \de_K \mapsto \sum_{i = 1}^{j + 1} \de_{(k_1,\ldots,1,k_i,\ldots,k_j)} \q \T{and} \\
& e : \cc[\nn^j] \to \cc[\nn^j] \q e : \de_K \mapsto \sum_{i = 1}^j \de_{(k_1,\ldots,k_i + 1,\ldots,k_j)}
\end{split}
\end{equation}
In particular we note that $s(\de_{\emptyset}) := \de_1$ and $e(\de_{\emptyset}) := 0$.

\begin{lemma}\label{l:derfunfor}
Let $j \in \nn_0$ and $\Te \in \cc[\nn^j]$ be given. Then
\[
\frac{d}{dz} \inn{R_\la^{m+1};\Te} = \inn{R_\la^{m+1};(s + e)(\Te) }
\]
\end{lemma}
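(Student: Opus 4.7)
The proof plan is to apply the Leibniz rule to the product definition of $\inn{R_\la^{m+1};\de_K}$ and match the two types of resulting terms with the operations $e$ and $s$ on $\cc[\nn^j]$. By linearity it suffices to treat the case $\Te = \de_K$ for a fixed $K \in \nn^j$ (with the convention for $j = 0$ handled separately but trivially).

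The key computational input is the resolvent derivative identity
\[
\frac{d}{dz} R_\la = - R_\la \cd \frac{d\al}{dz} \cd R_\la,
\]
which is legitimate by the holomorphy of $R_\la : U \to \T{OP}^{-2}(\De)$ (Lemma \ref{l:invhol}). Iterated, this yields
\[
\frac{d}{dz} R_\la^{m_i} = - \sum_{a + b = m_i + 1, \, a,b \geq 1} R_\la^a \cd \frac{d\al}{dz} \cd R_\la^b,
\]
while for the other factors one simply has $\frac{d}{dz}\frac{d^{k_i}\al}{dz} = \frac{d^{k_i + 1}\al}{dz}$, since all derivatives of $\al$ beyond the second vanish (but the formula still holds in general).

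Applying the product rule to
\[
R_\la^{m_1} \cd \frac{d^{k_1}\al}{dz} \cd R_\la^{m_2} \clc \frac{d^{k_j}\al}{dz} \cd R_\la^{m_{j+1}}
\]
therefore produces two families of terms. The contributions from differentiating the $j$ factors $\frac{d^{k_i}\al}{dz}$ keep the length of the multi-index equal to $j$ and the total exponent equal to $m+1+j$, raising exactly one entry $k_i$ by one: after summing over $M \in \nn^{j+1}$ and over $i$, this reproduces $\inn{R_\la^{m+1};e(\de_K)}$ with the correct sign $(-1)^j$. The contributions from differentiating the $j + 1$ resolvent blocks $R_\la^{m_i}$ split one exponent $m_i$ into $a + b = m_i + 1$ and insert an extra factor $\frac{d\al}{dz}$ in position $i$; this yields a length-$(j+1)$ multi-index with a freshly inserted $1$ at position $i$ and total exponent $m+1+(j+1)$, with an additional sign $-1$ coming from the resolvent derivative, so the total sign is $(-1)^{j+1}$. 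After relabelling, these terms assemble exactly into $\inn{R_\la^{m+1}; s(\de_K)}$, matching the sign convention in Definition \ref{d:condefone}.

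The main (and really only) subtlety is bookkeeping: one must verify that the multi-index summation ranges $\{M \in \nn^{j+1} : |M| = m+1+j\}$ and $\{M \in \nn^{j+2} : |M| = m+2+j\}$ are covered exactly once by the Leibniz-rule decomposition, and that the position of each inserted or incremented index agrees with the definition of $s$ and $e$ in \eqref{eq:comideder}. Once this combinatorial check is done and signs are tracked, the lemma follows. The edge case $j = 0$ (where $\Te = c\,\de_\emptyset$) is handled directly by $\frac{d}{dz} R_\la^{m+1} = - \sum_{a+b = m+2} R_\la^a \frac{d\al}{dz} R_\la^b = \inn{R_\la^{m+1};\de_1} = \inn{R_\la^{m+1};s(\de_\emptyset)}$, which is consistent with $e(\de_\emptyset) = 0$.
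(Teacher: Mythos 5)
Your proof is correct and follows essentially the same route as the paper's: treat $j=0$ directly via $\frac{d}{dz}R_\la = -R_\la\frac{d\al}{dz}R_\la$, then for general $j$ apply the Leibniz rule and identify the terms from differentiating the $j$ factors $\frac{d^{k_i}\al}{dz}$ with $e(\de_K)$ and those from differentiating the $j+1$ resolvent blocks with $s(\de_K)$. The paper's proof is terser but the combinatorial bookkeeping you spell out is exactly what is implicit there.
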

\begin{proof}
Suppose first that $j = 0$. Then
\[
\begin{split}
\frac{d}{dz} R_\la^{m+1}
& = \sum_{k = 0}^m R_\la^k \cd \frac{d R_\la}{dz} \cd R_\la^{m - k} \\
& = - \sum_{k = 0}^m R_\la^{k+1} \cd \frac{d \al}{dz} \cd R_\la^{m-k+1}
= \inn{R_\la^{m+1};\de_1}
\end{split}
\]

For general $j \in \nn$ and $K \in \cc[\nn^j]$, the Leibniz rule then yields the desired formula
\[
\begin{split}
\frac{d}{dz}\inn{R_\la^{m+1};\de_K} & = \sum_{i = 1}^{j+1} \inn{R_\la^{m+1}; \de_{(k_1,\ldots,1,k_i,\ldots,k_j)} } \\
& \qq + \sum_{i = 1}^j \inn{R_\la^{m+1};\de_{(k_1,\ldots,k_i + 1,\ldots,k_j)}}
\end{split}.
\]
\end{proof}

The desired explicit formula for the higher derivatives of powers of resolvents now follows immediately by Lemma \ref{l:derfunfor} and an induction argument.

\begin{prop}\label{p:higderres}
Let $l, m \in \nn_0$ be given. Then $\frac{d^l}{dz} R_\la^{m+1} = \inn{R_\la^{m+1}; (s + e)^l(\de_\emptyset)}$.
\end{prop}

\section{Isolation of the interpolating function}\label{s:isointfct}
In this Section we will continue our investigation of the derivatives appearing in the resolvent expansion of Proposition \ref{p:resexphom}. We are thus interested in the bounded operators of the form
\begin{equation}\label{eq:deralpzer}
\frac{d^l}{dz} \big( \lap + \al_0(\ep,\cd) + \la \big)^{-m-1} \big|_{z = 0} \q l,m \in \nn_0
\end{equation}
where we recall that the holomorphic map
\[
\al_\ep := \al_0(\ep,\cd) : \cc \to \T{OP}^1(\lap)
\]
is defined by putting $\al_\ep(z)$ equal to the closure of
\[
z \cd (D_2 h_\ep A + A h_\ep D_2) + z^2 \cd h_\ep^2 A^2 : H^\infty \to H
\]
for all $z \in \cc$ and all $\ep \in (0,1]$. More precisely we will compare the derivative in \eqref{eq:deralpzer} to the derivative
\[
h_\ep^l \cd \frac{d^l}{dz}\big( \lap + \be + \la \big)^{-m-1} \big|_{z = 0}
\]
where the holomorphic map 
\[
\be : \cc \to \T{OP}^1(\lap)
\]
is defined by letting $\be(z)$ equal the closure of
\[
z \cd (D_2 A + A D_2 ) + z^2 \cd A^2 : H^\infty \to H
\]
for all $z \in \cc$. This comparison will take place in the adiabatic limit where $\ep$ tends to zero from above and will rely on the explicit formula for derivatives of powers of resolvents obtained in Proposition \ref{p:higderres}. The result of this Section can thus be interpreted as a noncommutative analogue of the chain rule in calculus. Indeed, instead of a real parameter $\mu \in \rr$ we have rescaled the map $\be$ by the real valued function $h_\ep$ and we are then investigating the higher derivatives.

Let us once and for all fix a constant $\la_0 \geq 1$ such that the resolvents
\[
\begin{split}
& R(\ep) : \B B_2(0) \to \T{OP}^{-2}(\lap) \q R(\ep) : z \mapsto (\lap + \al_\ep(z) + \la_0)^{-1} \q \M{and} \\
& S : \B B_2(0) \to \T{OP}^{-2}(\lap) \q S : z \mapsto (\lap + \be(z) + \la_0)^{-1}
\end{split}
\]
are well-defined and holomorphic for all $\ep \in (0,1]$. We remark that even though $R(\ep)$ and $S$ depends on $\la_0$ we have surpressed this dependence from the notation.

The main goal of this Section is then to prove the following theorem:

\begin{thm}\label{t:adicharul}
Let $l \in \nn_0$ and $m \in \nn_0$ be given. We then have the convergence result:
\[
\lim_{\ep \searrow 0}
\big\| (\lap + 1)^{m+1} \big( \frac{d^l}{dz} R^{m + 1}(\ep) \big|_{z = 0} - h_\ep^l \cd \frac{d^l}{dz} S^{m + 1} \big|_{z = 0} \big) \big\| = 0
\]
\end{thm}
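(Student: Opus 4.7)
The plan is to expand both operators using the explicit formula of Proposition \ref{p:higderres} and then reduce the claim to a term-by-term estimate. The case $l = 0$ is trivial, since $\al_\ep(0) = \be(0) = 0$ gives $R(\ep)(0)^{m+1} = S(0)^{m+1} = (\lap+\la_0)^{-m-1}$ and the difference vanishes identically.

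For $l \geq 1$, Proposition \ref{p:higderres} writes both $l$-th derivatives at $z = 0$ as finite linear combinations $\langle \cdot ^{m+1}|_{z=0}; (s+e)^l \de_\emptyset\rangle = \sum_{K} \langle \cdot^{m+1}|_{z=0}; \de_K\rangle$ indexed by multi-indices $K \in \nn^j$ with $\sum_i k_i = l$, so it suffices to prove the convergence separately for each basis vector $\de_K$. The crucial algebraic input is that $h_\ep$ acts as multiplication in the $\rr$-variable and therefore commutes with both $D_2$ and $A$, whence $\tfrac{d \al_\ep}{dz}|_0 = h_\ep(D_2 A + A D_2) = h_\ep \tfrac{d\be}{dz}|_0$ and $\tfrac{d^2 \al_\ep}{dz}|_0 = 2 h_\ep^2 A^2 = h_\ep^2 \tfrac{d^2\be}{dz}|_0$. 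Each summand of $\langle R(\ep)^{m+1}|_0; \de_K\rangle$ is therefore a product
\[
(-1)^j (\lap + \la_0)^{-m_1} h_\ep^{k_1} Y_1 (\lap + \la_0)^{-m_2} h_\ep^{k_2} Y_2 \cdots (\lap + \la_0)^{-m_{j+1}}, \qquad Y_i := \tfrac{d^{k_i} \be}{dz}|_0,
\]
containing exactly $\sum k_i = l$ scattered factors of $h_\ep$, while the corresponding summand of $h_\ep^l \cd \langle S^{m+1}|_0; \de_K\rangle$ is the same product with all the $h_\ep$'s gathered on the far left.

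I would then commute the $h_\ep$ factors leftward past the resolvents. Since $h_\ep$ commutes with $\De_2$ (and with every $Y_i$), the only nontrivial commutators are with $(\lap+\la_0)^{-1}$, controlled by the elementary identity $[\lap, h_\ep] = -h_\ep'' - 2 h_\ep' \tfrac{d}{dt}$, hence
\[
[h_\ep, (\lap+\la_0)^{-1}] = (\lap+\la_0)^{-1} \bigl( h_\ep'' + 2 h_\ep' \tfrac{d}{dt}\bigr) (\lap+\la_0)^{-1}.
\]
Iterating turns each summand of $\langle R(\ep)^{m+1}|_0; \de_K\rangle$ into the corresponding summand of $h_\ep^l \cd \langle S^{m+1}|_0; \de_K\rangle$ plus a finite sum of error terms, each carrying at least one factor $h_\ep^{(r)}$ with $r \in \{1,2\}$.

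The last step is to show that every such error, premultiplied by $(\lap+1)^{m+1}$, has operator norm tending to zero as $\ep \searrow 0$. Each error is a product of $m+1+j$ resolvent factors (total order $-2(m+1+j)$), the $Y_i$'s (each of order $\leq 1$), finitely many $\tfrac{d}{dt}$ factors generated by commutators (each of order $1$), and multiplication operators $h_\ep^{(r')}$ with $r' \geq 0$. The total differential order contributed by $(\lap+1)^{m+1}$ together with the $Y_i$'s and the extra $\tfrac{d}{dt}$'s is at most $(2m+2) + j + j = 2m + 2 + 2j$, strictly less than $2(m+1+j)$ once there is at least one extra $\tfrac{d}{dt}$, and trivially handled otherwise; using that all bounded functions of $\lap$ commute, this order counting allows one to rearrange and bound every such product uniformly in $\ep$, the auxiliary $h_\ep^{(r')}$ factors staying uniformly bounded by Lemma \ref{l:supderint}. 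The distinguished factor $h_\ep^{(r)}$, $r \geq 1$, then forces the whole error to vanish by Lemma \ref{l:limderint}. The main obstacle is the combinatorial bookkeeping of all commutators and products; the cleanest way to manage it is to induct on the number of $h_\ep$'s still to be moved leftward, producing at most one new $h_\ep^{(r)}$ per commutation step so that only a single isolated factor needs to be shown to vanish at each stage.
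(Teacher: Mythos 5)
Your proposal is essentially the paper's own proof: expand via Proposition \ref{p:higderres}, use $\frac{d^{k}\al_\ep}{dz}(0) = h_\ep^{k}\frac{d^{k}\be}{dz}(0)$ (Lemma \ref{l:derintfro}), push the $h_\ep$'s to the left generating commutator remainders, bound everything uniformly in $\ep$ by Lemma \ref{l:supderint}, and kill the isolated $h_\ep^{(r)}$, $r\ge 1$, factor via Lemma \ref{l:limderint}. The paper keeps the commutators bundled as $[\lap, h_\ep^{k_p+\cdots+k_j}]$ inside the bracket $\inn{\cdot,\ldots,\cdot}$ of Definition \ref{d:condeftwo} (see Lemma \ref{l:intfctfro}) and controls norms through the conjugation operators $\si^i$ (Lemma \ref{l:funest}), rather than expanding $[\lap,h_\ep]$ in terms of $h_\ep', h_\ep''$ and $\tfrac{d}{dt}$; this is the tighter way to do the bookkeeping your last paragraph sketches. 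Two small inaccuracies worth flagging: the sign is $[\lap,h_\ep] = -h_\ep'' - 2h_\ep'\tfrac{d}{dt}$, so $[h_\ep,(\lap+\la_0)^{-1}] = -(\lap+\la_0)^{-1}\bigl(h_\ep'' + 2h_\ep'\tfrac{d}{dt}\bigr)(\lap+\la_0)^{-1}$ (harmless); and the order count is off, since each commutation replaces one resolvent (order $-2$) by $(\lap+\la_0)^{-1}[\lap,h_\ep](\lap+\la_0)^{-1}$ (order $-3$), so the number of resolvent factors is not the fixed $m+1+j$ you quote, and $2m+2+2j$ is equal to, not strictly less than, $2(m+1+j)$. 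The total order of each error term still drops by $1$ per commutation, so your conclusion is correct even though the arithmetic isn't.
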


In words the content of the above theorem is that it is possible to isolate the interpolating function $h_\ep$ in the adiabatic limit. More precisely, we observe that in the expression for the derivative $\frac{d^l}{dz}R(\ep)\big|_{z = 0}$, the interpolating function $h_\ep$ appears in a complicated combination with resolvents of $\lap$ and the operator $A$. On the contrary, in the expression $h_\ep^l \cd \frac{d^l}{dz} S^{m + 1} \big|_{z = 0}$, the interpolating function $h_\ep$ only appears at the very front. The result of Theorem \ref{t:adicharul} therefore provides a key step in our computation of the homological index.
\medskip

Let us begin the proof of the above theorem by making a basic observation:

\begin{lemma}\label{l:derintfro}
\[
\frac{d\al_\ep}{dz}(0) = h_\ep \cd \frac{d\be}{dz}(0) \q \M{and} \q
\frac{d^2 \al_\ep}{dz}(0) = h_\ep^2 \cd \frac{d^2\be}{dz}(0)
\]
\end{lemma}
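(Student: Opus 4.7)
The statement is an essentially computational identity, so the plan is to work on the common core $H^\infty$ where everything makes sense as an honest operator equality, and then extend by closure. The only substantive point is the commutation of the multiplication operator $h_\ep$ with the operator $D_2$, which I address first.

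First I would observe that, under the tensor product identification $H = L^2(\rr)\,\widehat{\otimes}\,H_2$, the multiplication operator $h_\ep : L^2(\rr,H_2) \to L^2(\rr,H_2)$ acts nontrivially only on the $L^2(\rr)$-factor, whereas $D_2$ (as extended to $H$ via the closure of $1 \ot D_2$) acts nontrivially only on the $H_2$-factor. Consequently, on the core $H^\infty = \cap_{k \geq 0}\sD(\lap^{k/2})$, the operators $h_\ep$ and $D_2$ commute: $h_\ep D_2 \xi = D_2 h_\ep \xi$ for all $\xi \in H^\infty$. (Here I use that $h_\ep$ preserves $H^\infty$, which follows because multiplication by a smooth bounded function with bounded derivatives is a pseudodifferential operator of order $0$ with respect to $\lap$ by Proposition~\ref{p:regpro}.)

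Next I differentiate the defining expressions termwise on $H^\infty$. From $\al_\ep(z)\xi = z\,(D_2 h_\ep A + A h_\ep D_2)\xi + z^2\, h_\ep^2 A^2 \xi$ on $H^\infty$, polynomial differentiation in $z$ gives
\[
\frac{d\al_\ep}{dz}(0)\,\xi = (D_2 h_\ep A + A h_\ep D_2)\,\xi,
\qquad
\frac{d^2\al_\ep}{dz^2}(0)\,\xi = 2\, h_\ep^2 A^2\,\xi.
\]
Similarly, on $H^\infty$ one has
\[
\frac{d\be}{dz}(0)\,\xi = (D_2 A + A D_2)\,\xi,
\qquad
\frac{d^2\be}{dz^2}(0)\,\xi = 2\, A^2\,\xi.
\]
Since $A(H^\infty) \subset H^\infty$ (Assumption~\ref{a:psesum}(1)) and $h_\ep$ commutes with $D_2$ on $H^\infty$, I can rewrite
\[
D_2 h_\ep A \xi + A h_\ep D_2 \xi = h_\ep D_2 A \xi + h_\ep A D_2 \xi = h_\ep\,(D_2 A + A D_2)\,\xi
\]
for every $\xi \in H^\infty$, giving $\tfrac{d\al_\ep}{dz}(0) = h_\ep\cdot \tfrac{d\be}{dz}(0)$ as operators on $H^\infty$. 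The second-derivative identity $\tfrac{d^2\al_\ep}{dz^2}(0) = 2 h_\ep^2 A^2 = h_\ep^2 \cdot \tfrac{d^2\be}{dz^2}(0)$ is immediate.

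Finally I would note that both sides of each equality are defined as closures of their restrictions to $H^\infty$ (as elements of $\T{OP}^1(\lap)$ and $\T{OP}^0(\lap)$ respectively), so equality on the core $H^\infty$ upgrades to equality of the closed operators by uniqueness of the closure. There is no real obstacle here; the only subtlety worth flagging is the commutation $[h_\ep, D_2]=0$ on $H^\infty$, which is what underlies the factorisation.
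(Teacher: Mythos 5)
Your proof is correct, and it supplies exactly the elementary verification that the paper omits (the lemma is stated without proof as a ``basic observation''). The approach --- work on the core $H^\infty$, use the tensor-factor structure to commute $h_\ep$ past the $H_2$-side operators, differentiate the polynomial coefficients, then pass to closures --- is the one the paper implicitly relies on. One small point worth making explicit: your displayed rewriting $D_2 h_\ep A \xi + A h_\ep D_2 \xi = h_\ep D_2 A \xi + h_\ep A D_2 \xi$ uses not only $[h_\ep, D_2] = 0$ (which you flag) but also $[h_\ep, A] = 0$ in the second summand; this holds for the same reason, since $A$ is $1 \ot A$ and $h_\ep$ is multiplication on the $L^2(\rr)$ factor, but it should be stated since the same commutation is also what reconciles the slight notational discrepancy between the definition of $T_1(\ep)$ in \eqref{eq:pseone} and the expression for $\al_\ep(z)$ used in Section~\ref{s:isointfct}.
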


We continue by introducing some convenient notation:

\begin{dfn}\label{d:condeftwo}
Let $m \in \nn_0$. For each $j \in \nn$ and $y_1,\ldots,y_j \in \T{OP}^2(\lap)$ we define the pseudodifferential operator of order $-2m - 2$, 
\[
\inn{y_1,\ldots,y_j} := (-1)^j \sum_{M \in \nn^{j+1}, \, |M| = m + j + 1} (\lap + \la_0)^{-m_1} y_1 
\clc (\lap + \la_0)^{-m_j} y_j \cd (\lap + \la_0)^{-m_{j+1}}
\]
For $j = 0$, we put $\inn{y_1,\ldots,y_j} := (\lap + \la_0)^{-m-1}$.
\end{dfn}

The relation between the above notation and the notation introduced in Definition \ref{d:condefone} is given by the following observation:
\[
\begin{split}
& \binn{ \frac{d^{k_1}\be}{dz}(0), \ldots, \frac{d^{k_j} \be}{dz}(0) } = \inn{S^{m + 1} ; \de_K}(0) \q \T{and} \\
& \binn{ \frac{d^{k_1} \al_\ep}{dz}(0), \ldots, \frac{d^{k_j} \al_\ep}{dz}(0) } = \inn{R^{m + 1}(\ep); \de_K}(0)
\end{split}
\]
for all $j \in \nn_0$ and $K = (k_1,\ldots,k_j) \in \nn^j$. We now compare the pseudodifferential operators
\[
\inn{R^{m + 1}(\ep) ; \de_K}(0) \q \M{and} \q
h_\ep^{|K|} \cd \inn{S^{m + 1}; \de_K }(0)
\]
where $|K| := k_1 \plp k_j$:

\begin{lemma}\label{l:intfctfro}
Let $m \in \nn_0$, $j \in \nn$, $K \in \nn^j$ and $\ep \in (0,1]$ be given. Then
\[
\begin{split}
& \inn{ R^{m + 1}(\ep) ; \de_K }(0) - h_\ep^{|K|} \cd \inn{ S^{m+1}; \de_K }(0) \\
& \q = \sum_{p = 1}^j \binn{\frac{d^{k_1} \al_\ep}{dz}(0), \ldots, \frac{d^{k_{p-1}} \al_\ep}{dz}(0),
[\lap, h_\ep^{k_p \plp k_j}], \frac{d^{k_p} \be}{dz}(0), \ldots,\frac{d^{k_j} \be}{dz}(0) }
\end{split}
\]
\end{lemma}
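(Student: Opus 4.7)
By Lemma~\ref{l:derintfro} we have $\frac{d^{k_i}\al_\ep}{dz}(0)=h_\ep^{k_i}y_i$, where $y_i:=\frac{d^{k_i}\be}{dz}(0)$, so $\inn{R^{m+1}(\ep);\de_K}(0)=\binn{h_\ep^{k_1}y_1,\ldots,h_\ep^{k_j}y_j}$ and the task is to collect every occurrence of $h_\ep$ outside the bracket. Two algebraic facts drive the argument. First, since $h_\ep$ is multiplication by a function on $\rr$ while each $y_i$ is built from $D_2$ and $A$ and hence acts only on the $H_2$-factor of $H=L^2(\rr,H_2)$, we have $[h_\ep,y_i]=0$ for every $i$. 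Second, iterating the single-resolvent identity $[h_\ep^n,(\lap+\la_0)^{-1}]=(\lap+\la_0)^{-1}[\lap,h_\ep^n](\lap+\la_0)^{-1}$ gives
\[
[h_\ep^n,(\lap+\la_0)^{-m}]=\sum_{\substack{a+b=m+1\\ a,b\ge 1}}(\lap+\la_0)^{-a}[\lap,h_\ep^n](\lap+\la_0)^{-b}.
\]

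Set $n_p:=k_p+\cdots+k_j$ and introduce the telescoping family
\[
V_0:=h_\ep^{|K|}\binn{y_1,\ldots,y_j},\quad V_p:=\binn{h_\ep^{k_1}y_1,\ldots,h_\ep^{k_{p-1}}y_{p-1},h_\ep^{n_p}y_p,y_{p+1},\ldots,y_j}\ (1\le p\le j).
\]
Since $n_1=|K|$ and $n_j=k_j$, the endpoints read $V_j=\binn{h_\ep^{k_1}y_1,\ldots,h_\ep^{k_j}y_j}=\inn{R^{m+1}(\ep);\de_K}(0)$ and $V_0=h_\ep^{|K|}\inn{S^{m+1};\de_K}(0)$, so the left-hand side of the lemma equals $V_j-V_0=\sum_{p=1}^{j}(V_p-V_{p-1})$.

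Each increment $V_p-V_{p-1}$ isolates one commutator. Writing $n_{p-1}=k_{p-1}+n_p$, the brackets $V_{p-1}$ and $V_p$ differ only in whether the factor $h_\ep^{n_p}$ sits just to the left of $y_{p-1}$ (in $V_{p-1}$; outside the whole expression when $p=1$) or just to the left of $y_p$ (in $V_p$). These positions are separated by the single resolvent $(\lap+\la_0)^{-m_p}$ (with $m_p\ge 1$), and because $h_\ep^{n_p}$ commutes with $y_{p-1}$ and with every $y_i$, the difference reduces to replacing $(\lap+\la_0)^{-m_p}$ by $[h_\ep^{n_p},(\lap+\la_0)^{-m_p}]$ inside the summand. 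Substituting the displayed commutator identity splits $m_p\mapsto(a,b)$ with $a+b=m_p+1$, $a,b\ge 1$, and inserts $[\lap,h_\ep^{n_p}]$ between them; after relabeling the index set from $\{|M|=m+j+1\}$ to $\{|M'|=m+j+2\}$ one obtains
\[
V_p-V_{p-1}=\binn{h_\ep^{k_1}y_1,\ldots,h_\ep^{k_{p-1}}y_{p-1},[\lap,h_\ep^{n_p}],y_p,\ldots,y_j}.
\]

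Summing over $p=1,\ldots,j$ yields the right-hand side of the lemma. The only obstacle in the argument is combinatorial bookkeeping: one must check that the sign $(-1)^{j+1}$ carried by a $(j+1)$-entry bracket balances correctly against the $(-1)^j$ of the original $j$-entry bracket together with the sign produced by the commutator identity, and that the reindexing $(m_1,\ldots,m_p,\ldots,m_{j+1})\leftrightarrow(m_1,\ldots,a,b,\ldots,m_{j+1})$ is a genuine bijection. No analytic input is required beyond the fact, implicit in Definition~\ref{d:condeftwo}, that every factor lies in a suitable pseudodifferential class so all products and sums make sense in $\sL(H)$.
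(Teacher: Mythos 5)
Your proof is correct and essentially identical to the paper's: the paper proves the intermediate identity \eqref{eq:indfordow} by downward induction on $l$, which is precisely your telescoping decomposition $V_j - V_0 = \sum_p (V_p - V_{p-1})$ with $V_{l-1}$ being the first term in \eqref{eq:indfordow} at step $l$, and the induction step is your increment computation. The two ingredients — $h_\ep$ commuting with $\be(z)$ (your $[h_\ep, y_i]=0$) and the iterated resolvent commutator identity — are exactly those used in the paper, and the sign/reindexing bookkeeping you flag does indeed work out.
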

\begin{proof}
We will prove that
\begin{equation}\label{eq:indfordow}
\begin{split}
& \binn{ \frac{d^{k_1}\al_\ep}{dz}(0), \ldots, \frac{d^{k_j} \al_\ep}{dz}(0) } \\
& \q = \binn{\frac{d^{k_1} \al_\ep}{dz}(0), \ldots, \frac{d^{k_{l-2}} \al_\ep}{dz}(0), 
h_\ep^{k_{l-1} \plp k_j} \frac{d^{k_{l-1}} \be}{dz}(0), \frac{d^{k_l} \be}{dz}(0), \ldots,\frac{d^{k_j} \be}{dz}(0) } \\
& \qq + \sum_{p = l}^j \binn{\frac{d^{k_1} \al_\ep}{dz}(0), \ldots, \frac{d^{k_{p-1}} \al_\ep}{dz}(0),
[\lap, h_\ep^{k_p \plp k_j}], \frac{d^{k_p} \be}{dz}(0), \ldots,\frac{d^{k_j} \be}{dz}(0) }
\end{split}
\end{equation}
for all $l \in \{1,\ldots,j+1\}$.

The proof runs by induction on $l \in \{1,\ldots,j+1\}$, starting with $l = j+1$ and moving downwards.

For $l = j+1$, the formula reads
\[
\binn{ \frac{d^{k_1}\al_\ep}{dz}(0), \ldots, \frac{d^{k_j} \al_\ep}{dz}(0) }
= \binn{ \frac{d^{k_1}\al_\ep}{dz}(0), \ldots, h_\ep^{k_j} \frac{d^{k_j} \be}{dz}(0) } \ ,
\]
but this identity is a consequence of Lemma \ref{l:derintfro}.

Suppose now that the formula in \eqref{eq:indfordow} holds for some $l \in \{2,\ldots,j+1\}$. It is then enough to show that
\begin{equation}\label{eq:indsteres}
\begin{split}
& \binn{\frac{d^{k_1} \al_\ep}{dz}(0), \ldots, \frac{d^{k_{l-2}} \al_\ep}{dz}(0), 
h_\ep^{k_{l-1} \plp k_j} \frac{d^{k_{l-1}} \be}{dz}(0), \frac{d^{k_l} \be}{dz}(0), \ldots,\frac{d^{k_j} \be}{dz}(0) } \\
& \q = \binn{\frac{d^{k_1} \al_\ep}{dz}(0), \ldots, \frac{d^{k_{l-3}} \al_\ep}{dz}(0), 
h_\ep^{k_{l-2} \plp k_j} \frac{d^{k_{l-2}} \be}{dz}(0), \frac{d^{k_{l-1}} \be}{dz}(0), \ldots,\frac{d^{k_j} \be}{dz}(0) } \\
& \qq + \binn{\frac{d^{k_1} \al_\ep}{dz}(0), \ldots, \frac{d^{k_{l-2}} \al_\ep}{dz}(0), [\lap,h_\ep^{k_{l-1} \plp k_j} ],
\frac{d^{k_{l-1}} \be}{dz}(0), \ldots,\frac{d^{k_j} \be}{dz}(0) }
\end{split}
\end{equation}

To this end, we first notice that
\[
h_\ep^{k_{l-2} \plp k_j} \frac{d^{k_{l-2}} \be}{dz}(0) (\xi) =
\frac{d^{k_{l-2}} \al_\ep}{dz}(0) h_\ep^{k_{l-1} \plp k_j} (\xi)
\]
for all $\xi \in H^\infty$. Indeed, this follows since $\be(z) h_\ep(\xi) = h_\ep \be(z) (\xi)$ for all $z \in \cc$.

We then let $m_{l-1} \in \nn$ be given and compute as follows:
\[
\begin{split}
& (\lap + \la_0)^{-m_{l-1}} h_\ep^{k_{l-1} \plp k_j}
= h_\ep^{k_{l-1} \plp k_j} (\lap + \la_0)^{-m_{l-1}} + \big[ (\lap + \la_0)^{-m_{l-1}}, h_\ep^{k_{l-1}\plp k_j}\big]  \\
& \q = h_\ep^{k_{l-1} \plp k_j} (\lap + \la_0)^{-m_{l-1}} \\
& \qqq - \sum_{n = 1}^{m_{l-1}} (\lap + \la_0)^{-n} [\lap,h_\ep^{k_{l-1} \plp k_j}] (\lap + \la_0)^{-(m_{l-1} + 1 - n)}
\end{split}
\]

These two computations clearly imply the identity in \eqref{eq:indsteres}, and the lemma is therefore proved.
\end{proof}

It is now our intention to estimate the size of the right-hand side of the identity in Lemma \ref{l:intfctfro}. We therefore analyze the general size of the pseudodifferential operators appearing in Definition \ref{d:condeftwo}.
Recall the introduction of the automorphism $\sigma : \T{OP}^0(\lap) \to \T{OP}^0(\lap)$ after Theorem \ref{t:firappind}.

\begin{lemma}\label{l:funest}
Let $m \in \nn_0$, $j \in \nn$, and let $y_1,\ldots,y_j \in \T{OP}^2(\lap)$ be given. Then we have the norm estimate
\[
\begin{split}
& \| (\lap + 1)^{m+1} \inn{y_1,\ldots,y_j} \| \\ 
& \q \leq {j + m \choose j}  \cd \sup_{I \in \nn^j\, , \, 1 \leq i_j \leq \ldots \leq i_1 \leq m + 1} \Big( 
\| \si^{i_1}( (\lap + \la_0)^{-1} y_1 )  \| \\ 
& \qqq \qqq \qqq \clc \| \si^{i_j}( (\lap + \la_0)^{-1} y_j)  \| \Big)
\end{split}
\]
\end{lemma}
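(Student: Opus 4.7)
The plan is to first observe that the sum defining $\inn{y_1,\ldots,y_j}$ has exactly ${m+j \choose j}$ terms: by stars-and-bars, setting $n_k := m_k - 1 \geq 0$ turns $|M| = m + j + 1$ into a composition of $m$ into $j+1$ non-negative parts. The binomial coefficient in the statement therefore matches the number of summands, so it suffices to bound each individual summand by $\sup_I \prod_k \|\si^{i_k}(R^{-1} y_k)\|$, where I abbreviate $R := \lap + \la_0$.

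For a fixed $M = (m_1,\ldots,m_{j+1})$ with $m_k \geq 1$ and $|M| = m+j+1$, the crucial choice will be
\[
i_k := m + 1 - (m_1 + \cdots + m_k) + k, \q k = 1,\ldots,j,
\]
together with the convention $i_0 := m+1$. The identity $i_{k-1} - i_k = m_k - 1 \geq 0$ immediately yields $i_1 \geq \cdots \geq i_j$, and a direct check gives $i_1 = m+2 - m_1 \leq m+1$ and $i_j = m_{j+1} \geq 1$, so $(i_1,\ldots,i_j)$ lies in the index set of the supremum.

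Next I will iteratively rewrite the summand: since $R$ commutes with $\lap + 1$, for each $k$,
\[
(\lap + 1)^{i_{k-1}} R^{-m_k} = (\lap + 1)^{m_k - 1} R^{-(m_k-1)} \cd (\lap + 1)^{i_k} R^{-1},
\]
and the rightmost factor absorbs $y_k$ via $(\lap+1)^{i_k} R^{-1} y_k = \si^{i_k}(R^{-1} y_k)(\lap+1)^{i_k}$. Iterating for $k = 1, \ldots, j$, and using $i_j = m_{j+1}$ for the final resolvent, will yield the telescoped identity
\[
(\lap+1)^{m+1} R^{-m_1} y_1 \cdots y_j R^{-m_{j+1}} = \prod_{k=1}^j \Bigl[ (\lap+1)^{m_k-1} R^{-(m_k-1)} \si^{i_k}(R^{-1} y_k) \Bigr] (\lap+1)^{m_{j+1}} R^{-m_{j+1}}.
\]

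Finally I will invoke the elementary spectral bound $\|(\lap + 1)^l R^{-l}\| \leq 1$ for all $l \in \nn_0$, valid because $\la_0 \geq 1$ and $\lap$ is positive. This kills every non-$\si$ factor in the display above, leaving $\prod_k \|\si^{i_k}(R^{-1} y_k)\|$ as an upper bound for the norm of each summand; summing over $M$ and dominating each product by the supremum over admissible $(i_1,\ldots,i_j)$ then gives the claim. The hard part is discovering the right sequence $(i_k)$: it must simultaneously satisfy the monotonicity constraints of the supremum and ensure that every leftover factor $(\lap+1)^{m_k-1} R^{-(m_k-1)}$ and $(\lap+1)^{m_{j+1}} R^{-m_{j+1}}$ has norm at most one. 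Once this sequence is in hand, the remainder is pure commutator bookkeeping.
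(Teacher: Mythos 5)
Your proof is correct and takes essentially the same route as the paper's. You commute powers of $\lap+1$ through the chain of resolvents and operators $y_k$ so as to write each summand as an alternating product of factors $(\lap+1)^{l}(\lap+\la_0)^{-l}$ (each of norm $\le 1$) and factors $\si^{i_k}((\lap+\la_0)^{-1}y_k)$; the paper does exactly this, bundling $(\lap+1)(\lap+\la_0)^{-1}$ into a single bounded operator $F$ and reading off the same exponents $i_k = m+k+1-(m_1+\cdots+m_k)$. Your explicit verification that $(i_1,\ldots,i_j)$ satisfies $1 \le i_j \le \cdots \le i_1 \le m+1$ and your stars-and-bars count of the ${m+j \choose j}$ summands are both precisely the ingredients the paper uses.
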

\begin{proof}
Define the bounded operator $F := (\lap + 1)(\lap + \la_0)^{-1}$ and notice that $\| F \| \leq 1$ since $\la_0 \geq 1$ by definition.

Let $\xi \in H^\infty$. We may compute as follows:
\[
\begin{split}
& (\lap + 1)^{m+1} \inn{y_1,\ldots,y_j}(\xi) \\
& \q = \sum_{M \in \nn^{j+1}, \, |M| = m + j + 1} (-1)^j \cd (\lap + 1)^{m+1} (\lap + \la_0)^{-m_1} y_1  \\
& \qqq \qqq \qqq \clc (\lap + \la_0)^{-m_j} y_j \cd (\lap + \la_0)^{-m_{j+1}}(\xi) \\
& \q = \sum_{M \in \nn^{j+1}, \, |M| = m + j + 1} (-1)^j \cd F^{m_1 -1} \si^{m+2 - m_1}( (\lap + \la_0)^{-1} y_1)  \\
& \qqq \qqq \q \clc F^{m_j - 1} \si^{m + j + 1 - m_1 - \ldots - m_j}( (\lap + \la_0)^{-1} y_j)  \\ 
& \qqq \qqq \qqq \cd F^{m_{j+1}} (\xi)
\end{split}
\]

We can thus estimate the operator norm in the following way:
\[
\begin{split}
& \|(\lap + 1)^{m+1} \inn{y_1,\ldots,y_j}\| \\ 
& \q \leq 
\sum_{M \in \nn^{j+1}, \, |M| = m + j + 1} \| \si^{m+2 - m_1}( (\lap + \la_0)^{-1} y_1)  \| \\
& \qqq \qqq  \q \clc \| \si^{m + j + 1 - m_1 - \ldots - m_j}( (\lap + \la_0)^{-1} y_j) \| \\
& \q \leq \sum_{M \in \nn^{j+1}, \, |M| = m + j + 1} \sup_{I \in \nn^j , \, 1 \leq i_j \leq \ldots \leq i_1 \leq m + 1} 
\Big( \| \si^{i_1}( (\lap + \la_0)^{-1} y_1)  \| \\
& \qqq \qqq \qqq \q \clc \| \si^{i_j}( (\lap + \la_0)^{-1} y_j)  \| \Big) \\
& \q = {j + m \choose j} \sup_{I \in \nn^j , \, 1 \leq i_j \leq \ldots \leq i_1 \leq m + 1}
\| \si^{i_1}( (\lap + \la_0)^{-1} y_1)  \|
\clc \| \si^{i_j}( (\lap + \la_0)^{-1} y_j)  \|
\end{split}
\]
This proves the lemma.
\end{proof}

For each $\ep \in (0,1]$, we define
\[
\Te(\ep) := \sup_{i \in \{1,\ldots,m+1\} } \big\| \si^i\big( (\lap + 1)^{-1} [\lap,h_\ep] \big) \big\|
\]

\begin{lemma}\label{l:estforter}
Let $m \in \nn_0$. There exists a constant $C > 0$, which is independent of $\la_0 \geq 1$, such that the estimate
\[
\begin{split}
& \big\| (\lap + 1)^{m+1} \big( \inn{R^{m+1}(\ep) ; \de_K }(0) - h_\ep^{|K|} \cd \inn{S^{m+1} ; \de_K}(0) \big) \big\| \\
& \q \leq \la_0^{-|K|/2} \cd C^{|K|} \cd \Te(\ep)
\end{split}
\]
holds for all $j \in \nn$, all $K \in \{1,2\}^j$ and all $\ep \in (0,1]$.
\end{lemma}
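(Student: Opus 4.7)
The plan is to combine Lemma \ref{l:intfctfro} with Lemma \ref{l:funest}. By Lemma \ref{l:intfctfro} the difference decomposes as a sum over $p \in \{1,\ldots,j\}$ of terms of the form $\inn{y_1,\ldots,y_{j+1}}$, in which exactly one entry is the commutator $[\lap,h_\ep^{k_p+\cdots+k_j}]$ and the remaining $j$ entries are first- or second-derivatives at $0$ of $\al_\ep$ or $\be$. Lemma \ref{l:funest} then reduces the task to bounding $\|\si^{i_r}((\lap+\la_0)^{-1}y_r)\|$ for each factor $y_r$, uniformly in $\ep \in (0,1]$ and in $i_r \in \{1,\ldots,m+1\}$.

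For each of the $j$ derivative factors, $y = \frac{d^{k_q}\al_\ep}{dz}(0)$ or $y = \frac{d^{k_q}\be}{dz}(0)$ is a pseudodifferential operator of order $2 - k_q$, so I would split
\[
(\lap+\la_0)^{-1} y \ = \ \bigl( (\lap+\la_0)^{-1}(\lap+1)^{(2-k_q)/2} \bigr) \cd \bigl( (\lap+1)^{-(2-k_q)/2} y \bigr).
\]
The first bracket is a function of $\lap$, hence $\si$-invariant, and has spectral norm at most $\la_0^{-k_q/2}$ (since $(x+\la_0)^{-1}(x+1)^{(2-k_q)/2} \leq (x+\la_0)^{-k_q/2}$ for $\la_0 \geq 1$ and $x \geq 0$). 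The second bracket has all its $\si$-iterates bounded uniformly in $\ep \in (0,1]$ and $i_r \leq m+1$ by Assumption \ref{a:psesum}, Proposition \ref{p:regpro}, and Lemma \ref{l:supderint}. Multiplying over the $j$ derivative factors yields exactly $\la_0^{-|K|/2}$ times an $\ep$-independent constant whose $|K|$-th root is bounded by some $C > 0$.

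For the commutator factor $y = [\lap, h_\ep^k]$ with $k = k_p + \cdots + k_j \leq 2j$, I would first use $\|(\lap+\la_0)^{-1}(\lap+1)\| \leq 1$ to reduce to estimating $\|\si^{i_r}((\lap+1)^{-1}[\lap,h_\ep^k])\|$, and then expand via the Leibniz identity $[\lap,h_\ep^k] = \sum_{l=0}^{k-1} h_\ep^l [\lap,h_\ep] h_\ep^{k-1-l}$. For each summand, the algebra identity $(\lap+1)^{-1}\si^{i_r}(h_\ep^l) = \si^{i_r-1}(h_\ep^l)(\lap+1)^{-1}$ yields
\[
\si^{i_r}\bigl((\lap+1)^{-1} h_\ep^l [\lap,h_\ep] h_\ep^{k-1-l}\bigr) = \si^{i_r-1}(h_\ep^l) \cd \si^{i_r}\bigl((\lap+1)^{-1}[\lap,h_\ep]\bigr) \cd \si^{i_r}(h_\ep^{k-1-l}),
\]
whose middle factor is bounded by $\Te(\ep)$ by definition, while the outer $h_\ep$-factors are uniformly bounded in $\ep$ and $i_r$ by Lemma \ref{l:supderint}, with their exponents $\leq 2|K|$ contributing at most $C^{|K|}$.

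Multiplying the $j+1$ factor bounds, summing over $l \in \{0,\ldots,k-1\}$ and over $p \in \{1,\ldots,j\}$, and including the binomial factor ${j+1+m \choose j+1}$ from Lemma \ref{l:funest}, one gathers polynomial-in-$j$ prefactors. Since $j \leq |K|$, these are absorbed into an enlarged $C^{|K|}$, giving the claimed bound $\la_0^{-|K|/2} \cd C^{|K|} \cd \Te(\ep)$. I expect the main obstacle to be the algebraic rearrangement exposing $(\lap+1)^{-1}[\lap,h_\ep]$ from inside the product $h_\ep^l [\lap,h_\ep] h_\ep^{k-1-l}$ — it must be executed so that no extra $\la_0$ leaks in and all $j$-dependence stays within $C^{|K|}$.
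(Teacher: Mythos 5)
Your proposal is correct and follows essentially the same route as the paper's proof: reduce via Lemma \ref{l:intfctfro}, estimate each $\inn{\cdot}$-term via Lemma \ref{l:funest}, extract $\la_0^{-k_q/2}$ from each derivative factor using the spectral bound $(x+1)^{(2-k_q)/2}(x+\la_0)^{-1} \leq \la_0^{-k_q/2}$ for $\la_0 \geq 1$, and handle the single commutator factor by the Leibniz expansion $[\lap,h_\ep^k]=\sum_l h_\ep^l[\lap,h_\ep]h_\ep^{k-1-l}$ together with the $\si$-shift identity and the uniform bounds of Lemma \ref{l:supderint}. The bookkeeping ($j\leq |K|$, binomial $\leq 2^{j+1+m}$, exponents $\leq |K|-1$) matches the estimate \eqref{eq:estforter} in the paper, and the absorption of polynomial-in-$|K|$ prefactors into $C^{|K|}$ is valid.
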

\begin{proof}
By Lemma \ref{l:supderint} we may choose a constant $C_0 \geq 1$ such that
\[
\| \si^i(h_\ep) \| \leq C_0 \, \, , \, \, \,
\big\| (\lap + 1)^{-1/2} \si^i(\frac{d\be}{dz}(0))\big\| \leq C_0 \, \, , \, \, \,
\big\| \si^i(\frac{d^2\be}{dz}(0))\big\| \leq C_0
\]
for all $i \in \{0,1,\ldots,m+1\}$ and all $\ep \in (0,1]$. Let then $j \in \nn$, $K \in \{1,2\}^j$, and $\ep \in (0,1]$ be given.
By Lemma \ref{l:intfctfro}, it is enough to estimate the quantity
\[
\begin{split}
& \sum_{p = 1}^j \Big\| (\lap + 1)^{m+1} \cd 
\binn{\frac{d^{k_1} \al_\ep}{dz}(0), \ldots, \frac{d^{k_{p-1}} \al_\ep}{dz}(0), 
[\lap,h_\ep^{k_p \plp k_j}], 
\frac{d^{k_p} \be}{dz}(0),\clc \frac{d^{k_j} \be}{dz}(0)  } \Big\|
\end{split}
\]

Next,
\[
\begin{split}
& \big\| \si^i\big( (\lap + 1)^{-1} [\lap, h_\ep^{k_p \plp k_j}]\big) \big\| \\
& \q \leq \sum_{r = 0}^{k_p \plp k_j - 1} \| \si^{i-1}(h_\ep) \|^r \cd \big\| \si^i\big( (\lap + 1)^{-1} [\lap,h_\ep] \big) \big\| \cd \| \si^i(h_\ep)\|^{k_p \plp k_j - 1 - r} \\
& \q \leq \Te(\ep) \cd C_0^{k_p \plp k_j - 1} \cd |K|
\end{split}
\]
for all $i \in \{1,\ldots,m+1\}$. Furthermore, since $\frac{d^k \be}{dz}$ is a pseudodifferential operator of order $2 - k$, we have that
\[
\begin{split}
\big\| \si^i \big(  (\lap + \la_0)^{-1} \frac{d^k \al_\ep}{dz}(0) \big) \big\|
= \big\| \si^i \big( (\lap + \la_0)^{-1} \frac{d^k \be}{dz}(0) h_\ep^k \big) \big\|
& \leq \la_0^{-k/2} \cd C_0^{1 + k} \q \T{and} \\
\big\| \si^i \big( (\lap + \la_0)^{-1} \frac{d^k \be}{dz}(0) \big) \big\| & \leq \la_0^{-k/2} \cd C_0
\end{split}
\]
for all $k \in \{1,2\}$, all $i \in \{1,\ldots,m+1 \}$. An application of Lemma \ref{l:funest} therefore yields that
\begin{equation}\label{eq:estforter}
\begin{split}
& \sum_{p = 1}^j \Big\| (\lap + 1)^{m+1} \cd 
\binn{\frac{d^{k_1} \al_\ep}{dz}(0), \ldots, \frac{d^{k_{p-1}} \al_\ep}{dz}(0), 
[\lap,h_\ep^{k_p \plp k_j}], \frac{d^{k_p} \be}{dz}(0),\clc \frac{d^{k_j} \be}{dz}(0)  } \Big\| \\
& \q \leq \sum_{p = 1}^j {j + 1 + m \choose j + 1} \cd \la_0^{-|K|/2} \cd C_0^{j + |K| - 1} \cd \Te(\ep) \cd |K| \\
& \q \leq |K|^2 \cd 2^{|K| + 1 + m} \cd \la_0^{-|K|/2} \cd C_0^{2 |K|} \cd \Te(\ep)
\end{split}
\end{equation}

The estimates in \eqref{eq:estforter} prove the present lemma.
\end{proof}

\begin{remark}
With more care it is possible to obtain a better estimate of the left hand side of Lemma \ref{l:estforter} in so far that $\la_0^{-|K|/2}$ can be replaced by $\la_0^{-(|K| + 1)/2}$. This can be done at the expense of replacing $(\lap + 1)^{-1}$ in the expression for $\Te(\ep)$ by $(\lap + 1)^{-1/2}$. We do not present this estimate here, since it is irrelevant for the developments of the present text. 
\end{remark}

\begin{prop}\label{p:higderest}
Let $m \in \nn_0$. There exists a constant $C > 0$, which is independent of $\la_0 \geq 1$, such that the estimate
\[
\Big\| (\lap + 1)^{m+1} \frac{d^l}{dz} \big( R^{m + 1}(\ep) - h_\ep^l S^{m + 1} \big)(0)\Big\|
\leq \Te(\ep) \cd l! \cd C^l \cd \la_0^{-l/2}
\]
holds for all $l \in \nn_0$ and all $\ep \in (0,1]$.
\end{prop}
\begin{proof}
The inequality is true for $l = 0$ since $\be(0) = \al_\ep(0) = 0$ for all $\ep \in (0,1]$.

By Proposition \ref{p:higderres} we have that
\[
\begin{split}
& (\lap + 1)^{m+1} \frac{d^l}{dz} \big( R^{m+1}(\ep) - h_\ep^l S^{m+1}  \big)(0) \\
& \q = (\lap + 1)^{m+1} \big( \inn{R^{m+1}(\ep);(s + e)^l(\de_{\emptyset})}(0) - h_\ep^l \cd \inn{S^{m+1}; (s + e)^l(\de_{\emptyset})}(0)  \big)
\end{split}
\]
for all $l \in \nn$ and all $\ep \in (0,1]$. Recall here that the maps $s$ and $e$ are defined in \eqref{eq:comideder}.

For each $l \in \nn$ we now notice that $(s + e)^l(\de_\emptyset)$ can be written as a finite sum of basis vectors $\de_K$, where each subset $K$ has length $|K| = l$. Furthermore, there are clearly less than $2^l \cd l!$ elements in this sum. Here each $\de_K$ is chosen to have the coefficient $1$ even though there may of course be many repetitions in the sum we are looking at.  

By Lemma \ref{l:estforter} and the above observations, we may thus choose a constant $C_0 > 0$, which is independent of $\la_0 \geq 1$, such that
\[
\begin{split}
\big\| (\lap + 1)^{m+1} \frac{d^l}{dz} \big( R^{m+1}(\ep) - h_\ep^l S^{m + 1}\big)(0) \big\|
\leq \Te(\ep) \cd \la_0^{-l/2} \cd C_0^l \cd l! \cd 2^l
\end{split}
\]
for all $l \in \nn$ and all $\ep \in (0,1]$.

This estimate proves the present proposition.
\end{proof}

We emphasize that the constant $C > 0$ depends on the fixed nonnegative integer $m \in \nn_0$. It is however independent of both $\la_0 \geq 1$, $l \in \nn_0$, and $\ep \in (0,1]$.

The result of Lemma \ref{l:limderint} implies that
\[
\Te(\ep) = \sup_{i \in \{1,\ldots,m+1\} } \big\| \si^i\big( (\lap + 1)^{-1} [\lap,h_\ep] \big) \big\|
\]
becomes zero in the limit where $\ep$ tends to zero from above. The main Theorem \ref{t:adicharul} announced in the beginning of this Section is therefore a consequence of the above Proposition \ref{p:higderest}.

\section{Second approximation of the homological index}\label{s:secapphom}
In this Section we will recollect what we have obtained thus far. Let us therefore recall that $A : H_2 \to H_2$ is a pseudodifferential operator of order $0$ with respect to $\De_2 := D_2^2 : \sD(\De_2) \to H_2$ and that $h : \rr \to \rr$ is a smooth bounded function such that $\frac{dh}{dt} : \rr \to \rr$ has compact support. Recall furthermore that $A$ satisfies the relative Schatten class conditions described in Assumption \ref{a:psesum} with respect to $\De_2$ and a fixed number $p_0 \in (0,\infty)$. We let $D_1 : \sD(D_1) \to L^2(\rr)$ denote the closure of the unbounded operator $i \frac{d}{dt} : C_c^\infty(\rr) \to L^2(\rr)$. {\bla As usual we will also apply the notation $D_1$ and $D_2$ for the closures of
\[
D_1 \ot 1 : \sD(D_1) \ot H_2 \to L^2(\rr,H_2) \q \T{and} \q 1 \ot D_2 : L^2(\rr) \ot \sD(D_2) \to L^2(\rr,H_2)
\]}

Let $m \in \nn$ with $m > p_0/2$ be given. The main theorem of this Section expresses the homological index in degree $m$ of the closed unbounded operator
\[
D_+ := -i D_1 + D_2 + h A : \sD(D_1) \cap \sD(D_2) \to L^2(\rr,H_2)
\]
as a power-series expansion. Each term in this expansion is a product where the first term only contains a multiplication operator in the flow parameter and the second term only contains operators on the `inner' Hilbert space $H_2$ and differentiation with respect to the flow parameter (there are no multiplication operators in the flow parameter in the second term). This will make it possible  to compute the trace in the flow direction which will take the form of an integral over the real line. This computation will be carried out in the next Section.

\begin{thm}\label{t:secapphom}
Let $m \in \nn$ with $m > p_0/2$ be given. Then there exists a constant $C > 0$ such that
\[
\T{H-Ind}_\la^m(D_+) = m \cd \la^m \cd \lim_{\ep \searrow 0} \sum_{l = 0}^\infty \frac{1}{l!} \cd \T{Tr}\big( F(\ep) \cd h_\ep^l \cd
\frac{d^l}{dz}( \lap + \be + \la)^{-m-1} \big|_{z=0} \big) 
\]
for all $\la \geq C$ and where the sum converges absolutely for all $\ep \in (0,1]$.
\end{thm}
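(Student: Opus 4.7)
The plan is to combine the first approximation in Theorem~\ref{t:firappind} with the resolvent expansion in Proposition~\ref{p:resexphom}, and then use the ``adiabatic chain rule'' estimate of Proposition~\ref{p:higderest} to replace the rescaled derivatives $\frac{d^l}{dz} R^{m+1}(\ep)\big|_{z=0}$ by the desired $h_\ep^l \cd \frac{d^l}{dz} S^{m+1}\big|_{z=0}$, term-by-term, in the limit $\ep \searrow 0$. I fix $m \in \nn$ with $m > p_0/2$ and enlarge the constant $C > 0$ furnished by Theorem~\ref{t:firappind} and Proposition~\ref{p:resexphom} so that in addition $C_1/\sqrt{\la} < 1$ for all $\la \geq C$, where $C_1 > 0$ is the constant appearing in Proposition~\ref{p:higderest} with the choice $\la_0 = \la$. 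Theorem~\ref{t:firappind} then gives the starting identity
\[
\T{H-Ind}_\la^m(D_+) = m \cd \la^m \cd \lim_{\ep \searrow 0} \T{Tr}\big( F(\ep) \cd (\lap + \al_0(\ep) + \la)^{-m-1}\big),
\]
and Proposition~\ref{p:resexphom} expands the operator inside the trace as a series converging absolutely in trace norm. Trace-norm convergence lets the trace commute with the sum, yielding an absolutely convergent series representation for the right-hand side in terms of $\frac{d^l}{dz} R^{m+1}(\ep)\big|_{z=0}$.

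Combining Proposition~\ref{p:higderest} (with $\la_0 = \la$) with the uniform trace-norm bound $\|F(\ep)(\lap + 1)^{-m-1}\|_1 \leq K$ from Proposition~\ref{p:trauniest} produces the key estimate
\[
\frac{1}{l!} \big\| F(\ep) \big( \frac{d^l}{dz} R^{m+1}(\ep)\big|_{z=0} - h_\ep^l \cd \frac{d^l}{dz} S^{m+1}\big|_{z=0}\big) \big\|_1 \leq K \cd \Te(\ep) \cd (C_1/\sqrt{\la})^l
\]
for all $\ep \in (0,1]$ and all $l \in \nn_0$. Summing the geometric series in $l$ gives a uniform-in-$l$ bound $K \cd \Te(\ep)/(1 - C_1/\sqrt{\la})$, which by Lemma~\ref{l:limderint} tends to zero as $\ep \searrow 0$. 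Together with Proposition~\ref{p:resexphom} and the triangle inequality, this estimate also shows that for every fixed $\ep \in (0,1]$ the series $\sum_{l = 0}^\infty \tfrac{1}{l!} F(\ep) \cd h_\ep^l \cd \tfrac{d^l}{dz} S^{m+1}\big|_{z=0}$ converges absolutely in trace norm, so the trace commutes with that sum as well. Subtracting the two series termwise under the trace and using the summed bound above, the absolute value of the total difference is at most $K \cd \Te(\ep)/(1 - C_1/\sqrt{\la})$, which vanishes in the limit; the two limits $\ep \searrow 0$ therefore coincide and the theorem follows.

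The main obstacle is the interchange of $\lim_{\ep \searrow 0}$ with the infinite sum over $l$, which cannot be handled termwise by Theorem~\ref{t:adicharul} alone, since that result gives no uniformity in $l$. Proposition~\ref{p:higderest} is precisely what provides such uniformity, in the form of an $l!$-cancelling, $\la$-dependent geometric bound; taking $\la$ large enough absorbs the constant $C_1^l$ and produces a summable majorant independent of $\ep$. This is the step that forces the constant $C > 0$ in the theorem to be chosen larger than what Theorem~\ref{t:firappind} and Proposition~\ref{p:resexphom} alone would require.
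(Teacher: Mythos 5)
Your proposal is correct and follows essentially the same route as the paper's proof: start from Theorem~\ref{t:firappind}, expand via Proposition~\ref{p:resexphom}, and then use the $l!$-cancelling geometric bound from Proposition~\ref{p:higderest} (with $\la_0 = \la$) together with the uniform trace-norm bound of Proposition~\ref{p:trauniest} and Lemma~\ref{l:limderint} to pass to the desired series in the adiabatic limit. The paper simply normalizes the geometric ratio to $1/2$ by its choice of $C$ rather than carrying $C_1/\sqrt{\la}$ explicitly, but the argument is the same.
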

\begin{proof}
By Theorem \ref{t:firappind} there exists a constant $C_0 > 0$ such that
\[
\T{H-Ind}_\la^m(D_+) = m \cd \la^m \cd \lim_{\ep \searrow 0} 
\T{Tr}\big( F(\ep) \cd (\lap + \al_0(\ep) + \la)^{-m-1} \big)
\]
for all $\la \geq C_0$.

By Proposition \ref{p:resexphom} there exists a constant $C_1 > 0$ such that
\[
\begin{split}
& m \cd \la^m \cd \T{Tr}\big( F(\ep) \cd ( \lap + \al_0(\ep) + \la )^{-m-1} \big) \\
& \q = m \cd \la^m \cd \sum_{l = 0}^\infty \frac{1}{l!} \cd \T{Tr}\big(
F(\ep) \cd \frac{d^l}{dz} (\lap + \al_0(\ep,\cd) + \la )^{-m-1} \big|_{z=0} 
\big)
\end{split}
\]
where the sum converges absolutely for all $\ep \in (0,1]$ and all $\la \geq C_1$.

By Proposition \ref{p:higderest} there exists a constant $C_2 > 0$ such that
\begin{equation}\label{eq:higderestI}
\begin{split}
& \big\| (\lap + 1)^{m+1} \cd \frac{d^l}{dz}\big(  (\lap + \al_0(\ep,\cd) + \la)^{-m-1} 
- h_\ep^l \cd (\lap + \be + \la)^{-m-1} \big)\big|_{z =0} \big\| \\
& \q \leq \Te(\ep) \cd l! \cd (1/2)^l
\end{split}
\end{equation}
for all $\la \geq C_2$, all $l \in \nn_0$ and all $\ep \in (0,1]$, where
\[
\Te(\ep) = \sup_{i \in \{1,\ldots,m+1\} } \| \si^i( (\lap + 1)^{-1}[\lap,h_\ep]) \| 
\]

To ease the notation, define the numbers
\[
\begin{split}
a_l(\la,\ep) & := \T{Tr}\big(
F(\ep) \cd \frac{d^l}{dz}(\lap + \al_0(\ep,\cd) + \la)^{-m-1}\big|_{z = 0} \big) \q \T{and} \\
b_l(\la,\ep) & := \T{Tr}\big( F(\ep) \cd h_\ep^l \cd \frac{d^l}{dz} (\lap + \be + \la)^{-m-1}\big|_{z = 0} \big)
\end{split}
\]
for all $\la \geq C_2$ and all $\ep \in (0,1]$.

It then follows by the estimate in \eqref{eq:higderestI} that the sum $m \cd \la^m \sum_{l = 0}^\infty \frac{1}{l!} \cd (a_l(\la,\ep) - b_l(\la,\ep))$ converges absolutely, and furthermore that
\[
m \cd \la^m \cd \sum_{l = 0}^\infty \frac{1}{l!} \cd | a_l(\la,\ep) - b_l(\la,\ep)|
\leq m \cd \la^m \cd 2 \cd \| F(\ep)(\lap + 1)^{-m-1} \|_1 \cd \Te(\ep)
\]
for all $\la \geq C_2$ and all $\ep \in (0,1]$.

From Lemma \ref{l:limderint} and Proposition \ref{p:trauniest} we know that
\[
\lim_{\ep \searrow 0} \big( m \cd \la^m \cd 2 \cd \| F(\ep)(\lap + 1)^{-m-1} \|_1 \cd \Te(\ep)  \big) = 0
\]
Therefore, we may conclude that
\[
m \cd \la^m \cd \lim_{\ep \searrow 0}  \Big( \sum_{l = 0}^\infty \frac{1}{l!} \big|a_l(\la,\ep) - b_l(\la,\ep)\big| \Big) = 0
\]
for all $\la \geq C_2$.

Put now $C := \T{max}\{ C_0,C_1,C_2 \}$ and let $\la \geq C$ be given. It then clearly follows by the above computations that
\[
\begin{split}
\T{H-Ind}_\la^m(D_+) = m \cd \la^m \cd \lim_{\ep \searrow 0} \Big( \sum_{l = 0}^\infty \frac{1}{l!} \cd a_l(\la,\ep) \Big)
= m \cd \la^m \cd \lim_{\ep \searrow 0} \Big( \sum_{l = 0}^\infty \frac{1}{l!} \cd b_l(\la,\ep) \Big)
\end{split}
\]
This proves the present theorem.
\end{proof}

\section{Integration over the flow parameter}
As mentioned in the beginning of Section \ref{s:secapphom}, the purpose of this Section is to integrate the expression for the homological index, which we obtained in Theorem \ref{t:secapphom}, over the flow parameter. This will provide us with a new formula for the homological index which only involves operators on the `inner' Hilbert $H_2$. A particular feature of this formula is also that it does no longer involve the rescaling parameter $\ep \in (0,1]$. 

Recall our notation $A_+ := h_+ \cd A = h(\infty) \cd A$ and $A_- := h_- \cd A = h(-\infty) \cd A$.

\begin{thm}\label{t:intflopar}
Let $m \in \nn$ with $m > p_0/2$ be given. There exists a constant $C > 0$ such that
\[
\begin{split}
 \T{H-Ind}_\la^m(D_+)  
& = \la^m \cd C_{m + 1/2} 
\cd \sum_{l = 0}^\infty \frac{1}{(l + 1)!} \cd 
\T{Tr}\Big( 
\frac{d^l}{dr} \Big(
A_+ \cd \big(\la +  (D_2 + r \cd A_+)^2\big)^{-m-1/2} \\
& \qq  - A_- \cd \big(\la +  (D_2 + r \cd A_-)^2\big)^{-m-1/2} \Big) \Big|_{r = 0} \Big)
\end{split}
\]
where the sum converges absolutely for all $\la \geq C$ and the constant is given by $C_{m + 1/2} := \frac{m}{\pi} \cd \int_{-\infty}^\infty (1 + \eta^2)^{-m-1} \, d\eta$.
\end{thm}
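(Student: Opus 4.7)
The plan is to take the expression for the homological index provided by Theorem \ref{t:secapphom},
\[
\T{H-Ind}_\la^m(D_+) = m \la^m \lim_{\ep\searrow 0} \sum_{l=0}^\infty \frac{1}{l!}\T{Tr}\Big(F(\ep) \cd h_\ep^l \cd \frac{d^l}{dz}(\lap + \be + \la)^{-m-1}\Big|_{z=0}\Big),
\]
and to integrate the flow variable out of each summand. Two algebraic observations drive the computation. First, $F(\ep) h_\ep^l = 2\frac{d h_\ep}{dt} h_\ep^l A = \frac{2}{l+1}\frac{d(h_\ep^{l+1})}{dt} A$, which converts the prefactor into an exact $t$-derivative and produces the combinatorial factor $1/(l+1)$ demanded by the claim. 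Second, $\be(z) = z(D_2 A + A D_2) + z^2 A^2 = (D_2 + zA)^2 - D_2^2$, so $\lap + \be + \la$ agrees with the closure of the constant-coefficient operator $D_1^2 \ot 1 + 1 \ot \bigl((D_2 + zA)^2 + \la\bigr)$; everything after the first tensor factor acts as a Fourier multiplier in the flow variable.

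Because of this product structure, the flow trace can be performed by a Fourier/kernel computation: for $g\in L^1(\rr)$ and an operator-valued symbol $p : \rr\to\sL(H_2)$ such that the relevant operators are of trace class,
\[
\T{Tr}_{H}\bigl(g(t)\cd A\cd p(D_1)\bigr) = \frac{1}{2\pi}\Bigl(\int_\rr g(t)\,dt\Bigr)\cd \int_\rr \T{Tr}_{H_2}\bigl(A\,p(\eta)\bigr)\,d\eta.
\]
Applied with $g(t) = \frac{d(h_\ep^{l+1})}{dt}$, the $t$-integral collapses to the $\ep$-independent constant $h_+^{l+1} - h_-^{l+1}$, so the $\ep \searrow 0$ limit becomes trivial. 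Splitting this difference into its $\pm$ parts and rescaling $r = h_\pm z$ in the two resulting $\eta$-integrals turns $A$ and $zA$ into $A_\pm$ and $rA_\pm$; the factors $h_\pm^{l+1}$ and the Jacobians combine to leave exactly $A_\pm\frac{d^l}{dr}(\eta^2 + (D_2 + rA_\pm)^2 + \la)^{-m-1}\big|_{r=0}$. Finally, the substitution $\eta = \sqrt{X + \la}\cd u$ evaluates
\[
\frac{m}{\pi}\int_{-\infty}^\infty(\eta^2 + X + \la)^{-m-1}\,d\eta = C_{m+1/2}(X + \la)^{-m-1/2}
\]
for any positive selfadjoint $X$, which raises the exponent from $-m-1$ to $-m-1/2$; applying this via functional calculus with $X = (D_2 + rA_\pm)^2$ and pulling the $r$-derivative outside the constant-in-$r$ prefactor $A_\pm$ matches the statement term by term.

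The main obstacle is the rigorous justification of the flow-trace formula and the interchange of the $\eta$-integral with the trace on $H_2$ and the $r$-derivative. For each $l \in \nn_0$, one has to verify that $A\cd\frac{d^l}{dz}(\lap+\be+\la)^{-m-1}\big|_{z=0}$ genuinely has the product form $A\cd p(D_1)$ required, with an $H_2$-valued symbol whose pointwise trace is integrable in $\eta$; the relative Schatten bounds of Assumption \ref{a:psesum}, together with the control of powers of $A$ coming from the resolvent expansion of Proposition \ref{p:higderres} and the uniform estimates in Proposition \ref{p:higderest}, should provide the trace-norm bounds needed to apply Fubini. Absolute convergence of the final series, and the termwise legitimacy of the rescaling, will follow from the $\la_0^{-l/2} C^l l!$ decay already used in the proof of Theorem \ref{t:secapphom}, which survives the change of variables $r = h_\pm z$ without degradation.
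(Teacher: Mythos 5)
Your proposal is correct and its skeleton matches the paper's proof: start from Theorem \ref{t:secapphom}, rewrite $F(\ep)\,h_\ep^l = \frac{2}{l+1}\frac{d(h_\ep^{l+1})}{dt}\,A$ to pick up the factor $\frac{1}{l+1}$ and make the $t$-integral boundary-valued (hence independent of $\ep$), factor the trace over the flow direction, use the rescaling $h_\pm^l\,\tfrac{d^l}{dz}\big|_0 = \tfrac{d^l}{dr}(\,\cdot\,(D_2+rA_\pm)^2\,\cdot\,)\big|_0$ to absorb $h_\pm^l$ into $A_\pm$, and evaluate the $\eta$-integral by the substitution $\eta \mapsto (X+\la)^{1/2}u$. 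All of these steps, including the chain-rule rescaling (the paper's Lemma \ref{l:charul}) and the $\T{Tr}\otimes 1$ factorization on which they hinge, are exactly the paper's.

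The one genuine divergence is in \emph{how} the flow trace is evaluated. You compute it directly as the trace of $g(t)\cd A\cd p(D_1)$ via the Schwartz-kernel formula for a Fourier multiplier with operator-valued symbol $p(\eta) = \frac{d^l}{dz}(\eta^2 + (D_2+zA)^2 + \la)^{-m-1}\big|_{z=0}$. The paper instead Laplace-transforms the resolvent power to a superposition of heat semigroups (Lemma \ref{l:laptra}), factorizes $\exp(-s(\lap+\be(r)+\la)) = \exp(-s(\la+(D_2+rA)^2))\otimes\exp(-s\De_1)$, and reduces the flow trace to the trace of $g(t)\exp(-s\De_1)$ on $L^2(\rr)$, where the kernel is an explicit Gaussian (Lemma \ref{l:intflopar}). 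These two computations are numerically identical — the Gaussian integral over $s$ and $\xi$ reproduces your $\eta$-integral — but the Laplace route is easier to make airtight: the trace-norm absolute convergence of the $s$-integral is a one-line spectral-theorem estimate, the factorization $\T{Tr}\otimes 1$ is applied to a manifestly trace-class heat operator (Appendix C, Proposition \ref{p:intver}), and the kernel-on-the-diagonal step is invoked only for the explicit Gaussian rather than for an abstract operator-valued symbol. Your last paragraph correctly identifies this as the point requiring rigor; it is precisely the work that the Laplace transform is doing for the paper. If you filled that gap — either by reproducing the Laplace argument, or by showing directly that $p(\cdot)$ is trace-norm continuous, rapidly decaying, and that the kernel-trace identity applies — your proof would be complete.
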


The starting point for proving Theorem \ref{t:intflopar} is the main Theorem \ref{t:secapphom} which we proved in Section \ref{s:secapphom}. This latter theorem provides a power series expansion of the homological index where each entry in the sum has the form
\[
\frac{1}{l!} \cd \T{Tr}\Big( F(\ep) \cd h_\ep^l \cd \frac{d^l}{dr} (\lap + \be + \la)^{-m-1} \big|_{r =0} \Big)
\]
for $l \in \nn_0$ and $\ep \in (0,1]$. Note that we have replaced the complex parameter $z$ by a real parameter $r$. It is therefore now our main interest to compute the operator trace appearing in the above expression. This computation will rely on the factorization of the operator trace associated to a tensor product of Hilbert spaces. This factorization result is described in Appendix \ref{ss:pretra}. For the convenience of the reader we recall that
\[
F(\ep) = 2 \cd \frac{d h_\ep}{dt} \cd A : L^2(\rr,H_2) \to L^2(\rr,H_2) \q \T{for all } \ep \in (0,1]
\]
and that $\be(r)$ is defined as the closure of
\[
r \cd (D_2 A + A D_2) + r^2 \cd A^2 : L^2(\rr,H_2)^\infty \to L^2(\rr,H_2)
\]
for all $r \in \rr$. We remark that $\be(r)$ is a selfadjoint unbounded operator and furthermore that
\[
\be(r) + \lap = \De_1 + (D_2 + r A)^2 : \sD(\lap) \to L^2(\rr,H_2)
\]
is a positive unbounded operator for all $r \in \rr$. Recall here that $\De_1 : \sD(\De_1) \to L^2(\rr,H_2)$ is defined as the closure of the Laplacian
\[
- \frac{d^2}{dt} \ot 1 : C_c^\infty(\rr) \ot H_2 \to L^2(\rr,H_2) 
\] 

Let us also introduce the unbounded selfadjoint operators $\be_+(r)$ and $\be_-(r)$ defined as the closures of
\[
\begin{split}
& r \cd (D_2 A_+ + A_+ D_2 ) + r^2 A_+^2 : H_2^\infty \to H_2 \q \T{and} \\
& r \cd (D_2 A_- + A_- D_2 ) + r^2 A_-^2 : H_2^\infty \to H_2
\end{split}
\]
respectively, for all $r \in \rr$, where we recall that $A_+ = h_+ A$ and $A_- = h_- A$.

Notice that the dense subspaces $H_2^\infty \su H_2$ and $L^2(\rr,H_2)^\infty \su L^2(\rr,H_2)$ are defined by
\[
H_2^\infty := \cap_{k = 0}^\infty \sD(\De_2^{k/2}) \q \T{and} \q L^2(\rr,H_2)^\infty := \cap_{k = 0}^\infty \sD(\lap^{k/2})
\]
respectively.

We will in this Section only consider a fixed $m \in \nn$ with $m > p_0/2$.

\begin{lemma}\label{l:laptra}
Let $l \in \nn_0$, $r \in \rr$, $\la > 0$, and $\ep \in (0,1]$ be given. Then
\[
F(\ep) \cd h_\ep^l \cd (\lap + \be(r) + \la)^{-m-1}
= \frac{1}{m!} \cd \int_0^\infty F(\ep) \cd h_\ep^l \cd s^m \cd \exp\big(-s (\lap + \be(r) + \la)\big) \, ds
\]
where the integral converges absolutely in trace norm.
\end{lemma}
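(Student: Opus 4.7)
My plan is to view the identity as an operator-valued Laplace transform and prove it in two stages: first as an operator-norm identity via functional calculus, then upgrade to trace-norm absolute convergence.

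The starting point is the elementary scalar identity
\[
x^{-m-1} = \frac{1}{m!}\int_0^\infty s^m e^{-sx}\,ds
\]
which converges uniformly for $x \in [\la,\infty)$ because the tail $\int_N^\infty s^m e^{-sx}\,ds \leq \int_N^\infty s^m e^{-s\la}\,ds$ is uniformly small in $x$ on that half-line. Since $\lap + \be(r) + \la = \De_1 + (D_2+rA)^2 + \la$ is positive self-adjoint with spectrum contained in $[\la,\infty)$, the spectral theorem yields
\[
(\lap+\be(r)+\la)^{-m-1} = \frac{1}{m!}\int_0^\infty s^m e^{-s(\lap+\be(r)+\la)}\,ds
\]
in operator norm. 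Left-multiplying by the bounded operator $F(\ep) h_\ep^l$ preserves operator-norm convergence and gives the asserted identity at the operator level.

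To upgrade to absolute convergence in trace norm, I would pick an integer $M$ with $p_0/2 < M < m$ (possible because $m > p_0/2$ is an integer strict inequality) and factor the integrand as
\[
F(\ep) h_\ep^l s^m e^{-s(\lap+\be(r)+\la)}
= \bigl[F(\ep) h_\ep^l (\lap+\be(r)+\la)^{-M-1}\bigr]\cd \bigl[s^m (\lap+\be(r)+\la)^{M+1} e^{-s(\lap+\be(r)+\la)}\bigr].
\]
For the first factor I would write it as $F(\ep) h_\ep^l(\lap+1)^{-M-1}\cd(\lap+1)^{M+1}(\lap+\be(r)+\la)^{-M-1}$; the first piece is trace class with uniform bound by Proposition \ref{p:trauniest} (applied with $M$ in place of $m$), and the second piece should be bounded in operator norm by an estimate in the spirit of Proposition \ref{p:uniresest}. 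The second factor's operator norm is controlled via functional calculus by $s^m \sup_{x\geq\la} x^{M+1}e^{-sx}$; elementary analysis shows this is $O(s^{m-M-1})$ as $s\to 0^+$ and decays exponentially as $s\to\infty$, so it is integrable on $(0,\infty)$ since $m > M$. The finiteness of $\int_0^\infty \|\cdot\|_1\,ds$ then yields both that the integrand is trace class for a.e.\ $s$ and that the integral converges absolutely in trace norm; combined with the operator-norm identity already established, this proves the lemma.

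The main obstacle will be verifying that $(\lap+1)^{M+1}(\lap+\be(r)+\la)^{-M-1}$ is bounded for $M \geq 1$. The one-step version $(\lap+1)^{1/2}(\lap+\be(r)+\la)^{-1/2}$ is bounded by the form inequality $\lap + 1 \leq C_r(\lap+\be(r)+\la)$, itself a consequence of $\De_2 \leq 2(D_2+rA)^2 + 2r^2\|A\|^2$. However, Löwner ordering does not transfer directly to higher powers because $\lap$ and $\be(r)$ do not commute. I expect to handle this by iterated commutator estimates in the Connes--Moscovici pseudodifferential calculus, in the style of Proposition \ref{p:uniresest}, moving factors of $(\lap+1)^{1/2}$ through the resolvent one at a time and controlling the commutator corrections involving $[\lap,\be(r)](\lap+1)^{-1}$, whose uniform boundedness was already established in Section \ref{s:firapphom}.
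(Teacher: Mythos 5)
Your overall strategy—establish the identity in operator norm via the spectral theorem, then upgrade to trace-norm absolute convergence by factoring the integrand into a fixed trace-class ``shaved resolvent power'' times an operator-norm-integrable remainder—is exactly the paper's. However, the specific choice of auxiliary exponent introduces a genuine gap. You require an integer $M$ with $p_0/2 < M < m$, and the remark that this is ``possible because $m > p_0/2$ is an integer strict inequality'' does not establish existence: if $m$ is the smallest integer exceeding $p_0/2$, the open interval $(p_0/2, m)$ contains no integer. For instance, with $p_0 = 1.5$ and $m = 1$ one would need an integer in $(0.75, 1)$; with $p_0 = 3.1$ and $m = 2$ one would need an integer in $(1.55, 2)$. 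In these cases your factorization is unavailable.

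The paper sidesteps this by shaving a \emph{fractional} $\de \in (0,1/2)$ off the exponent $m+1$ and showing $F(\ep) \cd h_\ep^l \cd (\lap + \be(r) + \la)^{-m-1+\de} \in \sL^1$ directly via Proposition \ref{p:sumpro} (which admits arbitrary real exponents), rather than via Proposition \ref{p:trauniest} (stated only for integers). For trace class one then only needs $2(m+1-\de) > p_0 + 1$, i.e.\ $m > (p_0-1)/2 + \de$, which holds for every $\de < 1/2$ since $m > p_0/2 = (p_0-1)/2 + 1/2$; so unlike the integer window, this window is always nonempty. The remainder $s^m (\lap+\be(r)+\la)^{m+1-\de} e^{-s(\lap+\be(r)+\la)}$ has operator norm $O(s^{\de-1})$ as $s \to 0^+$ and decays exponentially as $s \to \infty$, hence is integrable precisely because $\de > 0$. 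Your proposed mechanism for controlling $(\lap+1)^{M+1}(\lap+\be(r)+\la)^{-M-1}$—pulling powers of $(\lap+1)^{1/2}$ through the resolvent via the pseudodifferential calculus, essentially iterating Proposition \ref{p:resestpow}—is the right idea for integer powers; for the fractional exponent needed here the analogous control is supplied by Proposition \ref{p:traholrea} together with Lemma \ref{l:invhol}, which is what the paper's citation of ``the results of Appendix \ref{s:holpseope}'' is referring to.
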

\begin{proof}
We start by rewriting the power of the resolvent $(\lap + \be(r) + \la)^{-m-1}$ using the Laplace transform, thus
\[
(\lap + \be(r) + \la)^{-m-1} = \frac{1}{m!} \cd \int_0^\infty s^m \cd \exp\big( -s (\lap + \be(r) + \la)\big) \, ds
\]

Notice now that the operator
\[
F(\ep) \cd h_\ep^l \cd (\lap + \be(r) + \la)^{-m-1 + \de} : L^2(\rr,H_2) \to L^2(\rr,H_2)
\]
is of trace class for all $\de \in (0,1/2)$, see Proposition \ref{p:sumpro} and the results of Appendix \ref{s:holpseope} (recall here that $m > p_0/2$). It therefore suffices to show that the integral
\[
\int_0^\infty s^m \cd (\lap + \be(r) + \la)^{m+1 - \de} \cd \exp\big(-s (\lap + \be(r) + \la) \big) \, ds
\]
converges absolutely in operator norm for some $\de \in (0,1/2)$. But this follows by an application of the spectral theorem for selfadjoint unbounded operators.
\end{proof}

Recall that $L^2(\rr,H_2)$ may be regarded as $L^2(\rr) \hot H_2$. We may thus apply the bounded operator $\T{Tr} \ot 1 : \sL^1(L^2(\rr) \hot H_2) \to \sL^1(H_2)$ to any trace class operator $T : L^2(\rr,H_2) \to L^2(\rr,H_2)$, see Appendix \ref{ss:pretra}. We are particularly interested in the trace class operator which we analyzed in Lemma \ref{l:laptra}.

\begin{lemma}\label{l:intflopar}
Let $l \in \nn_0$, $r \in \rr$, $\la > 0$, and $\ep \in (0,1]$ be given. Then
\[
\begin{split}
& (\T{Tr} \ot 1)\big( F(\ep) \cd h_\ep^l \cd (\lap + \be(r) + \la)^{-m-1} \big) \\
& \q  = \frac{(h_+^{l+1} - h_-^{l+1})\cd A}{\pi \cd (l + 1)} \cd \int_{-\infty}^\infty (\la + (D_2 + rA)^2 + \xi^2)^{-m-1} \, d\xi
\end{split}
\]
\end{lemma}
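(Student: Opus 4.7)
\emph{Proof proposal.} My plan is to use Lemma~\ref{l:laptra} to rewrite the power of the resolvent as a Laplace transform of exponentials, then exploit the tensor product decomposition $L^2(\rr,H_2) \cong L^2(\rr) \hot H_2$ together with the identity $\lap + \be(r) + \la = \De_1 + (D_2 + rA)^2 + \la$ in order to factor both the integrand and the semigroup, and finally evaluate the partial trace in the $L^2(\rr)$ variable using the explicit heat kernel of $\De_1$.

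First I would invoke Lemma~\ref{l:laptra} to replace $(\lap + \be(r) + \la)^{-m-1}$ by the absolutely trace-norm-convergent integral $\frac{1}{m!}\int_0^\infty s^m e^{-s(\lap + \be(r) + \la)}\, ds$. Since $\De_1$ and $(D_2 + rA)^2$ act on different tensor factors of $L^2(\rr) \hot H_2$, they commute strongly as selfadjoint operators, so that $e^{-s(\lap + \be(r) + \la)} = e^{-s\De_1} \hot e^{-s((D_2 + rA)^2 + \la)}$. The prefactor splits as well: $F(\ep) \cd h_\ep^l = M_{2 h_\ep' h_\ep^l} \hot A$, where $M_g$ denotes multiplication by $g$ on $L^2(\rr)$. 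Applying the partial trace $\T{Tr} \ot 1$ of Appendix~\ref{ss:pretra} and moving it inside the $s$-integral (justified by the trace-norm convergence above and continuity of the partial trace on $\sL^1$) yields
\[
(\T{Tr}\ot 1)\bigl( F(\ep) h_\ep^l e^{-s(\lap + \be(r) + \la)} \bigr)
= \T{Tr}_{L^2(\rr)}\bigl( M_{2 h_\ep' h_\ep^l} e^{-s\De_1} \bigr) \cd A\, e^{-s((D_2 + rA)^2 + \la)}.
\]
Using the standard heat kernel $(4\pi s)^{-1/2} e^{-(t-t')^2/(4s)}$ of $e^{-s\De_1}$, the scalar trace equals $\frac{1}{\sqrt{\pi s}} \int_{-\infty}^\infty h_\ep'(t) h_\ep(t)^l\, dt$.

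Next I would evaluate the $t$-integral by the chain rule $h_\ep' h_\ep^l = \frac{1}{l+1}(h_\ep^{l+1})'$ together with the asymptotic values $h(\pm\infty) = h_\pm$ (which are attained by $h_\ep$ outside a compact set), producing $(h_+^{l+1} - h_-^{l+1})/(l+1)$. Collecting terms reduces the partial trace to
\[
\frac{h_+^{l+1} - h_-^{l+1}}{(l+1)\sqrt{\pi}\, m!}\, A \cd \int_0^\infty s^{m-1/2} e^{-s((D_2 + rA)^2 + \la)}\, ds,
\]
and functional calculus on the positive selfadjoint operator $(D_2 + rA)^2 + \la \geq \la > 0$ evaluates the inner integral to $\Gamma(m + 1/2)\bigl((D_2 + rA)^2 + \la\bigr)^{-m-1/2}$. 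A final change-of-variables applied in the functional calculus, based on the identity $\Gamma(m+1/2) B^{-m-1/2} = \frac{m!}{\sqrt{\pi}}\int_{-\infty}^\infty (B + \xi^2)^{-m-1}\, d\xi$ valid for $B > 0$, cancels the $\sqrt{\pi}\, m!$ factor and converts the expression into the claimed $\xi$-integral with $\pi(l+1)$ in the denominator.

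The main obstacle will be the rigorous justification of the tensor factorization of the semigroup and the interchange of partial trace and Laplace integral. The interchange is handled by the trace-norm absolute convergence furnished by Lemma~\ref{l:laptra} and the contractivity of $\T{Tr}\ot 1$ on $\sL^1$. The semigroup factorization relies on $\De_1$ and $(D_2 + rA)^2$ arising as closures of operators of the form $T\ot 1$ and $1\ot S$ on a tensor product of Hilbert spaces, which makes them strongly commuting selfadjoint operators whose joint functional calculus gives the required product formula $e^{-s(\De_1 + (D_2 + rA)^2 + \la)} = e^{-s\De_1}\hot e^{-s((D_2 + rA)^2 + \la)}$.
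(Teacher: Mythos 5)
Your proposal is correct and follows essentially the same route as the paper: rewrite the power of the resolvent via Lemma~\ref{l:laptra}, factor the semigroup as $e^{-s\lap}e^{-s\be(r)} = e^{-s\De_1}\hot e^{-s(D_2+rA)^2}$, compute the scalar trace of the multiplication operator against the heat semigroup on $L^2(\rr)$, and then interchange integrals. The only cosmetic difference is that you evaluate the Gaussian $\int e^{-s\xi^2}\,d\xi$ explicitly (via the diagonal of the heat kernel), obtain the $-m-1/2$ power by the $\Gamma$-function, and then re-introduce the $\xi$-integral through a Beta-function identity, whereas the paper leaves $\int e^{-s\xi^2}\,d\xi$ un-evaluated (via the Plancherel trace formula) and simply swaps the $s$- and $\xi$-integrations at the end to produce the $-m-1$ power directly.
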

\begin{proof}
By Lemma \ref{l:laptra} and Proposition \ref{p:intver} we have that
\[
\begin{split}
& (\T{Tr} \ot 1)\big( F(\ep) \cd h_\ep^l \cd (\lap + \be(r) + \la)^{-m-1} \big) \\
& \q = \frac{1}{m!} \cd \int_0^\infty s^m \cd (\T{Tr} \ot 1)\Big( F(\ep) \cd h_\ep^l \cd \exp\big(-s(\lap + \be(r) + \la) \big) \Big) \, ds
\end{split}
\]

Let now $s \in (0,\infty)$ be given. Then
\begin{equation}\label{eq:intflopar}
\begin{split}
& (\T{Tr} \ot 1)\big( F(\ep) \cd h_\ep^l \cd \exp(-s(\lap + \be(r) + \la) ) \big) \\
& \q = 2 A \cd \exp\big(-s(\la + (D_2 + r A)^2) \big) \cd \T{Tr}\Big( \frac{dh_\ep}{dt} \cd h_\ep^l \cd \exp(-s \De_1)\Big)
\end{split}
\end{equation}
where we have applied the identity
\[
\exp\big(-s(\lap + \be(r) + \la) \big) = \exp\big( -s(\la + (D_2 + rA)^2)\big) \cd \exp(-s \De_1)
\]

But the trace on the right hand side of \eqref{eq:intflopar} is given by
\[
\begin{split}
\T{Tr}\Big( \frac{dh_\ep}{dt} \cd h_\ep^l \cd \exp(-s \De_1)\Big)
& = \frac{1}{2 \pi} \cd \int_{-\infty}^\infty \frac{dh_\ep}{dt} \cd h_\ep^l \, dt
\cd \int_{-\infty}^\infty \exp(-s \xi^2) d\xi \\
& = \frac{h_+^{l + 1} - h_-^{l+1}}{2 \pi \cd (l + 1)} \cd \int_{-\infty}^\infty \exp(-s \xi^2) \, d\xi
\end{split}
\]

Recollecting the above identities, we obtain that
\[
\begin{split}
& (\T{Tr} \ot 1)\big( F(\ep) \cd h_\ep^l \cd (\lap + \be(r) + \la)^{-m-1} \big) \\
& \q = \frac{1}{m!} \cd \int_0^\infty \Big( s^m \cd 2 A \cd \exp\big(-s(\la + (D_2 + r A)^2) \big) \\ 
& \qqq \cd
\frac{h_+^{l + 1} - h_-^{l+1}}{2 \pi \cd (l + 1)} \cd \int_{-\infty}^\infty \exp(-s \xi^2) \, d\xi \Big) ds \\
& \q = \frac{h_+^{l + 1} - h_-^{l+1}}{\pi \cd (l + 1) \cd m!} \cd A \cd
\int_0^\infty \int_{-\infty}^\infty s^m \cd \exp\big(-s(\la + (D_2 + r A)^2 + \xi^2) \big) \, d\xi ds \\
& \q = \frac{h_+^{l + 1} - h_-^{l+1}}{\pi \cd (l + 1)} \cd A \cd \int_{-\infty}^\infty (\la + (D_2 + rA)^2 + \xi^2)^{-m-1} \, d\xi
\end{split}
\]
where we notice that the last identity follows by changing the order of integration. This operation is allowed since the integrals involved converge absolutely in operator norm. 

The lemma is now proved.
\end{proof}

We notice that the quantity in Lemma \ref{l:intflopar} is independent of the given $\ep \in (0,1]$.
The next result reduces the computation of the homological index to a computation on the `inner' Hilbert space $H_2$.
 
\begin{prop}\label{p:intflopar}
Let $l \in \nn_0$, $\la > 0$, and $\ep \in (0,1]$ be given. Then
\[
\begin{split}
& (\T{Tr} \ot 1)\Big( F(\ep) \cd h_\ep^l \cd \frac{d^l}{dr} (\lap + \be + \la)^{-m-1} \big|_{r = 0}\Big) \\
& \q =
\frac{1}{\pi \cd (l+1)} \cd \int_{-\infty}^{\infty} (1 + \eta^2)^{-m-1} \, d\eta \\
& \qq \cd \frac{d^l}{dr}\Big( A_+ (\la + (D_2 + r A_+)^2)^{-m-1/2} - A_- (\la + (D_2 + r A_-)^2)^{-m-1/2}\Big) \Big|_{r = 0} 
\end{split}
\]
\end{prop}
\begin{proof}
By Proposition \ref{p:tranorhol} we obtain that $r \mapsto F(\ep) \cd \big(\lap + \be(r) + \la \big)^{-m-1}$ is smooth in trace norm on an open neighborhood of zero. Thus, since $\T{Tr} \ot 1 : \sL^1(L^2(\rr) \hot H_2) \to \sL^1(H_2)$ is a bounded operator (by Proposition \ref{p:intver}) we have that
\[
\begin{split}
& (\T{Tr} \ot 1)\Big( F(\ep) \cd h_\ep^l \cd \frac{d^l}{dr} (\lap + \be + \la)^{-m-1} \big|_{r = 0}\Big) \\
& \q = \frac{d^l}{dr} \Big(  (\T{Tr} \ot 1)\big( 
F(\ep) \cd h_\ep^l \cd (\lap + \be + \la)^{-m-1}
\big) \Big) \Big|_{r = 0} \\
& \q = \frac{(h_+^{l+1} - h_-^{l+1}) \cd A}{\pi \cd (l + 1)} \cd \frac{d^l}{dr} \Big( \int_{-\infty}^\infty
  \big(\la + (D_2 + rA)^2 + \xi^2 \big)^{-m-1} \, d\xi \Big) \Big|_{r = 0}
\end{split}
\]
where the second identity follows by Lemma \ref{l:intflopar}.

To prove the proposition it therefore suffices to verify that
\[
\begin{split}
& h_{\pm}^l \cd \frac{d^l}{dr} \Big( \int_{-\infty}^\infty
  \big(\la + (D_2 + rA)^2 + \xi^2 \big)^{-m-1} \, d\xi \Big) \Big|_{r = 0} \\
& \q = \int_{-\infty}^\infty (1 + \eta^2)^{-m-1} \, d\eta \cd \frac{d^l}{dr}   \big(\la + (D_2 + rA_{\pm})^2 \big)^{-m-1/2} \Big|_{r =0}
\end{split}
\]
But this is the content of the next lemma.
\end{proof}

\begin{lemma}\label{l:charul}
Let $l \in \nn_0$, $\la > 0$ and $\mu \in \rr$ be given. Then we have the identity
\[
\begin{split}
& \mu^l \cd \frac{d^l}{dr} \Big( \int_{-\infty}^\infty
  \big(\la + (D_2 + rA)^2 + \xi^2 \big)^{-m-1} \, d\xi \Big) \Big|_{r = 0} \\
& \q = \int_{-\infty}^\infty (1 + \eta^2)^{-m-1} \, d\eta \cd \frac{d^l}{dr}   \big(\la + (D_2 + \mu rA)^2 \big)^{-m-1/2} \Big|_{r =0}
\end{split}
\]
\end{lemma}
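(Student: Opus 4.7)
My plan is to reduce the identity to a routine application of the chain rule, once the inner $\xi$-integral is carried out in closed form via functional calculus.

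\textbf{Step 1 (evaluating the integral).} For every $r \in \rr$ the unbounded operator
\[
B(r) := \la + (D_2 + rA)^2 : \sD(\De_2) \to H_2
\]
is positive selfadjoint with $B(r) \geq \la \cd I > 0$, so the functional calculus is available for all complex powers. The scalar identity
\[
\int_{-\infty}^\infty (s + \xi^2)^{-m-1} \, d\xi = C \cd s^{-m-1/2}, \qquad s > 0,
\]
with $C := \int_{-\infty}^\infty (1+\eta^2)^{-m-1}\, d\eta$, is obtained by the substitution $\xi = \sqrt{s}\eta$. Combining this with the spectral resolution $B(r) = \int s \, dE_{B(r)}(s)$ and Fubini (justified because $\|(B(r)+\xi^2)^{-m-1}\| \leq (\la+\xi^2)^{-m-1}$ is integrable in $\xi$, giving absolute convergence in operator norm), I obtain the operator-valued identity
\[
\int_{-\infty}^\infty \bigl(\la + (D_2 + rA)^2 + \xi^2\bigr)^{-m-1} \, d\xi = C \cd \bigl(\la + (D_2+rA)^2\bigr)^{-m-1/2},
\]
valid for every $r \in \rr$.

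\textbf{Step 2 (reduction to a chain rule).} Substituting the identity of Step 1 inside the $l$-th derivative at $r = 0$ on the left-hand side of the lemma, the assertion becomes
\[
\mu^l \cd \frac{d^l}{dr}\bigl(\la + (D_2 + rA)^2\bigr)^{-m-1/2}\Big|_{r = 0} = \frac{d^l}{dr}\bigl(\la + (D_2 + \mu rA)^2\bigr)^{-m-1/2}\Big|_{r = 0}.
\]
Setting $f(r) := (\la + (D_2+rA)^2)^{-m-1/2}$, the right-hand side is exactly $\frac{d^l}{dr} f(\mu r)\big|_{r=0}$, and the chain rule $\frac{d^l}{dr}f(\mu r) = \mu^l f^{(l)}(\mu r)$ gives the required equality upon evaluation at $r = 0$.

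\textbf{Step 3 (smoothness justification).} The only nontrivial point is that $f$ is $l$-times differentiable in operator norm near $r = 0$; this is precisely the type of statement proved in Appendix \ref{s:holpseope} (via Lemma \ref{l:invhol} and the resolvent-type manipulations used in Proposition \ref{p:higderres}), since $r \mapsto \la + (D_2+rA)^2 = \lap\text{-analogue applied to }\De_2$ is a polynomial family of nonnegative selfadjoint operators with spectrum bounded below by $\la$, and any fractional power can be represented via the Cauchy integral of the resolvent. The main potential obstacle is verifying that the operator identity in Step 1 is a genuine identity rather than merely a sesquilinear-form identity; this is secured by the norm-absolute convergence of the Bochner integral and the boundedness of both sides as operators on $H_2$. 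With smoothness in place, Steps 1 and 2 combine to conclude the proof.
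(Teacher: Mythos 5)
Your proof follows the same route as the paper: first evaluate the $\xi$-integral in closed form to get $C \cd (\la + (D_2 + rA)^2)^{-m-1/2}$, then reduce the claim to the chain rule for the operator-valued map $f(r) = (\la + (D_2 + rA)^2)^{-m-1/2}$, with smoothness supplied by the appendix results on holomorphic families. The one small discrepancy is in Step 3: the paper cites Lemma \ref{l:invhol} together with Proposition \ref{p:traholrea} (which is the result specifically handling smoothness of the \emph{fractional} power $-m-1/2$), whereas you cite Proposition \ref{p:higderres}, which concerns integer powers of resolvents and is not quite the right tool for the non-integer exponent; Proposition \ref{p:traholrea} is the reference you want.
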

\begin{proof}
By a change of variables we obtain that
\[
\int_{-\infty}^\infty
  \big(\la + (D_2 + rA)^2 + \xi^2 \big)^{-m-1} \, d\xi
= \int_{-\infty}^\infty (1 + \eta^2)^{-m-1} \, d\eta \cd \big(\la + (D_2 + rA)^2 \big)^{-m-1/2}
\]
Furthermore, by Lemma \ref{l:invhol} and Proposition \ref{p:traholrea} we conclude that the map
\[
f : \rr \to \sL(H_2) \q f : r \mapsto (\la + (D_2 + rA)^2)^{-m-1/2}
\]
is smooth in operator norm. It therefore follows by an application of the chain rule that
\[
\mu^l \frac{d^l f}{dr}  = \frac{d^l f_\mu}{dr} 
\]
where the smooth map $f_\mu : \rr \to \sL(H)$ is defined by $f_\mu(r) := f(\mu \cd r)$ for all $r \in \rr$. This proves the lemma.
\end{proof}

The main Theorem \ref{t:intflopar} of this Section is now clearly a consequence of Proposition \ref{p:intflopar}, Proposition \ref{p:intver} and Theorem \ref{t:secapphom}.

\section{The integral trace formula for the homological index}
We are now ready to prove the main theorem of this paper. We need to start by proving a version of the main theorem where we assume that the scaling parameter $\la > 0$ is large. It is necessary to apply this assumption to begin with since our resolvent expansions only exist for large $\la > 0$. We will then be able to prove the general version of our theorem by noticing that both sides of the integral trace formula have analytic extensions to the open subset $\cc\sem (-\infty,0]$ of the complex plane.

\begin{prop}\label{t:intlarsca}
Let $m \in \nn$ with $m > p_0/2$ be given. There exists a constant $C > 0$ such that
\[
\begin{split}
 \T{H-Ind}_\la^m(D_+) 
 = \la^m \cd C_{m + 1/2}
\cd \int_0^1 \T{Tr}\Big( A_+ (\la + (D_2 + r \cd A_+)^2)^{-m-1/2} \\
 - A_- (\la + (D_2 + r \cd A_-)^2)^{-m-1/2} \Big) \, dr
\end{split}
\]
for all $\la \geq C$ where the constant in front is given by 
$C_{m + 1/2} := \frac{m}{\pi} \cd \int_{-\infty}^{\infty} (1 + \eta^2)^{-m-1} \, d\eta$.
\end{prop}
\begin{proof}
By Proposition \ref{p:tranorhol} and Proposition \ref{p:traholrea} there exists a constant $C_0 > 0$ such that the maps
\[
f_{\pm}(\la, \cd) : \B B_2(0) \to \sL^1(H_2) \q f_{\pm}(\la, z) :=  A_{\pm} (\la + (D_2 + z A_{\pm})^2)^{-m-1/2}
\]
are holomorphic for all $\la \geq C_0$. It therefore follows that
\[
\begin{split}
\int_0^1 \T{Tr}\big( f_{\pm}(\la, r) \big) \, dr 
& = \sum_{l = 0}^\infty \frac{1}{(l + 1)!} 
\T{Tr}\big( \frac{d^l}{dz} f_{\pm}(\la, \cd) \big|_{z = 0} \big) \\
& = \sum_{l = 0}^\infty \frac{1}{(l + 1)!} \T{Tr}\big( \frac{d^l}{dr} f_{\pm}(\la,\cd) \big|_{r = 0} \big)
\end{split}
\]
for all $\la \geq C_0$, where $z \in \B B_2(0)$ is a complex parameter and $r \in (-2,2)$ is a real parameter.

However, by Theorem \ref{t:intflopar} there exists a constant $C_1 > 0$ such that
\[
\T{H-Ind}_\la^m(D_+) = \la^m \cd C_{m + 1/2} \cd \sum_{l = 0}^\infty \frac{1}{(l + 1)!} \T{Tr}\Big( \frac{d^l}{dr}\big(f_+(\la,\cd) - f_-(\la,\cd)\big)\big|_{r =0} \Big)
\]
for all $\la \geq C_1$.

Thus, for all $\la \geq \T{max}\{ C_0, C_1 \}$ we obtain that
\[
\T{H-Ind}_\la^m(D_+) = \la^m \cd C_{m + 1/2} \cd \int_0^1 \T{Tr}\big(f_+(\la,r) - f_-(\la,r) \big)\, dr
\]
This proves the theorem.
\end{proof}

As promised in the beginning of this Section we may remove the constraint on the scaling parameter $\la > 0$ present in Proposition \ref{t:intlarsca}. This leads us to the main theorem of this paper.

\begin{thm}
Let $m \in \nn$ with $m > p_0/2$ be given. Then
\[
\begin{split}
\T{H-Ind}_\la^m(D_+) 
= \la^m \cd C_{m + 1/2}
\cd \int_0^1 \T{Tr}\Big( A_+ (\la + (D_2 + r \cd A_+)^2)^{-m-1/2} \\
 - A_- (\la + (D_2 + r \cd A_-)^2)^{-m-1/2} \Big) \, dr
\end{split}
\]
for all $\la > 0$ where the constant is given by $C_{m + 1/2} := \frac{m}{\pi} \cd \int_{-\infty}^{\infty} (1 + \eta^2)^{-m-1} \, d\eta$.
\end{thm}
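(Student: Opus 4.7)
The plan is to upgrade Proposition \ref{t:intlarsca}, which establishes the identity only for $\la \geq C$, to all $\la > 0$ by analytic continuation. The key observation is that both sides of the integral trace formula should be restrictions to the real half-line of holomorphic functions on the slit plane $\cc \sem (-\infty,0]$; since this domain is connected and the half-line $[C,\infty)$ has accumulation points in it, the identity principle for holomorphic (scalar-valued) functions will conclude the proof.

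For the left hand side, the operators $D_- D_+$ and $D_+ D_-$ are positive selfadjoint, so their spectra lie in $[0,\infty)$. For any $\la \in \cc \sem (-\infty,0]$ the holomorphic functional calculus therefore defines the bounded operators $(\la + D_- D_+)^{-m}$ and $(\la + D_+ D_-)^{-m}$, and these depend holomorphically on $\la$ in operator norm. The task is to promote this to a holomorphic dependence in trace norm for the \emph{difference}. I would do this by revisiting the argument of Proposition \ref{p:exihom} and observing that the proof of \cite[Theorem 3.1]{CaGrKa:AOS} only uses the positivity of the unperturbed operator through resolvent bounds that remain valid for $\la \in \cc \sem (-\infty,0]$ on any compact set (with a constant depending on the distance from $(-\infty,0]$). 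The Cauchy integral representation
\[
(\la + T)^{-m} = \frac{1}{2\pi i} \oint_\ga (\mu + T)^{-m} \frac{d\mu}{\mu - \la}
\]
around a suitable contour $\ga \su \cc \sem (-\infty,0]$ encircling $\la$ then yields holomorphy in trace norm of the difference $(\la + D_- D_+)^{-m} - (\la + D_+ D_-)^{-m}$, so that the scalar $\la \mapsto \la^m \cd \T{Tr}\big((\la + D_- D_+)^{-m} - (\la + D_+ D_-)^{-m}\big)$ is holomorphic on $\cc \sem (-\infty,0]$.

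For the right hand side the analysis is analogous but simpler, since only operators on the `inner' Hilbert space $H_2$ appear. For each fixed $r \in [0,1]$, the operator $(D_2 + r \cd A_\pm)^2$ is positive selfadjoint, and Assumption \ref{a:psesum} together with the arguments of Proposition \ref{p:tranorhol} and Proposition \ref{p:traholrea} show that the map
\[
\la \mapsto A_+ (\la + (D_2 + r A_+)^2)^{-m-1/2} - A_- (\la + (D_2 + r A_-)^2)^{-m-1/2}
\]
extends holomorphically in trace norm to $\cc \sem (-\infty,0]$, with bounds uniform in $r \in [0,1]$ on each compact subset of $\cc \sem (-\infty,0]$. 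The uniformity permits interchanging $\T{Tr}$, $\int_0^1 \,dr$ and any complex contour integral in $\la$, and Morera's theorem then shows the RHS is holomorphic on $\cc \sem (-\infty,0]$.

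With both sides established as holomorphic functions of $\la$ on the connected open set $\cc \sem (-\infty,0]$ that coincide on $[C,\infty)$ by Proposition \ref{t:intlarsca}, the identity theorem gives equality on the whole domain, and in particular for every $\la > 0$. The main obstacle in this plan is not the identity principle itself but the verification of holomorphy in \emph{trace norm} (rather than mere operator norm) for complex $\la$; this is where the relative Schatten-class bounds from Assumption \ref{a:psesum} and the uniform resolvent estimates developed in Sections \ref{s:firapphom}--\ref{s:resexphom} must be shown to survive the passage to complex $\la$ in a neighborhood of any point of $\cc \sem (-\infty,0]$. Once that is in hand, the extension from $[C,\infty)$ to $(0,\infty)$ is immediate.
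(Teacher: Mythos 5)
Your proposal matches the paper's proof: both sides are shown to extend holomorphically in $\la$ to the slit plane $\cc \setminus (-\infty,0]$, and the identity theorem is applied using agreement on $[C,\infty)$ from Proposition \ref{t:intlarsca}. The only cosmetic difference is that the paper obtains holomorphy of $\la \mapsto \T{H-Ind}_\la^m(D_+)$ by citing \cite[Proposition 3.3]{CaGrKa:AOS} directly, whereas you sketch a re-derivation of that fact; for the right-hand side both you and the paper invoke Propositions \ref{p:tranorhol} and \ref{p:traholrea} (with $X=[0,1]$ supplying the uniformity in $r$).
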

\begin{proof}
By \cite[Proposition 3.3]{CaGrKa:AOS} we have that the function
\[
\la \mapsto \T{H-Ind}_\la^m(D_+) \q (0,\infty) \to \cc
\]
has a holomorphic extension to $\cc\sem (-\infty,0]$.

Furthermore, by Proposition \ref{p:tranorhol} and Proposition \ref{p:traholrea} (with the compact Hausdorff space $X = [0,1]$) we know that the maps
\[
\la \mapsto \la^m \cd C_{m + 1/2} \int_0^1 A_{\pm} (\la + (D_2 + r \cd A_{\pm})^2)^{-m-1/2} \, dr \q
(0,\infty) \to \sL^1(H_2)
\]
have extensions to $\cc\sem (-\infty,0]$ which are holomorphic in trace norm. As a consequence we obtain that the functions
\[
\la \mapsto \la^m \cd C_{m + 1/2} \int_0^1 \T{Tr}\Big( A_{\pm} (\la + (D_2 + r \cd A_{\pm})^2)^{-m-1/2} \Big) \, dr \q
(0,\infty) \to \cc
\]
have holomorphic extensions to $\cc\sem (-\infty,0]$.

By Theorem \ref{t:intlarsca} there exists a constant $C > 0$ such that
\[
\begin{split}
& \T{H-Ind}_\la^m(D_+) 
 = \la^m \cd C_{m + 1/2}
\cd \int_0^1 \T{Tr}\Big( A_+ (\la + (D_2 + r \cd A_+)^2)^{-m-1/2} \\
& \qqq \qq \qq - A_- (\la + (D_2 + r \cd A_-)^2)^{-m-1/2} \Big) \, dr
\end{split}
\]
for all $\la \geq C$. The result of the present theorem therefore follows by the uniqueness of holomorphic extensions.
\end{proof}

\appendix{

\section{Holomorphic families of pseudodifferential operators}\label{s:holpseope}
%
In this appendix we are concerned with holomorphic families of abstract pseudodifferential operators. This material will be used throughout this text and we therefore expose it in a careful way. We will work in the general context where we have a fixed positive unbounded operator $\De : \sD(\De) \to H$ which acts on a separable Hilbert space $H$.

Recall the notation $H^\infty := \cap_{k \in \nn_0} \sD(\De^{k/2})$ of domains of the powers of $\De$. 
%

\begin{dfn}
Let $n \in \zz$. A \emph{pseudodifferential operator} of order $n$ is a \emph{closed} unbounded operator $T : \sD(T) \to H$ such that
\begin{enumerate}
\item $H^\infty \su \sD(T)$;
\item $T(\xi) \in H^\infty$ for all $\xi \in H^\infty$;
\item $(1 + \De)^{-n/2} \de^k(T) : H^\infty \to H$ extends to a bounded operator on $H$ for all $k \in \nn_0$, where $\de = [\De^{1/2}, \cd]$.
\end{enumerate}
The set of pseudodifferential operators of order $n$ is denoted by $\T{OP}^n(\De)$.
\end{dfn}

The sum of two pseudodifferential operators $S \in \T{OP}^n(\De)$ and $T \in \T{OP}^m(\De)$ is defined as the closure of the restriction $S + T : H^\infty \to H$. The product of $S$ and $T$ is defined as the closure of
$ST : H^\infty \to H$. It can then be proved that these algebraic operations provide the union
\[
\T{OP}^\infty(\De) := \bigcup_{n \in \zz} \T{OP}^n(\De)
\]
with the structure of a unital algebra over $\cc$.
A lot more can be said about the general structure of pseudodifferential operators, and we refer the interested reader to the papers, \cite{CoMo:LIF,Hig:LFN,CGRS:ILN}.

In this appendix we are mainly interested in \emph{holomorphic families} of pseudodifferential operators. {\bla To ease the exposition we will fix an open set $U \su \cc$ and a compact Hausdorff space $X$.}

\begin{dfn}\label{d:holfampse}
Let $n \in \zz$. A map $\al : U \ti X \to \T{OP}^n(\De)$ is said to be \emph{holomorphic in supremum norm} when the following holds for each $k \in \nn_0$:
\begin{enumerate}
\item The map
\[
U \ti X \to \sL(H) \q (z,x) \mapsto (1 + \De)^{-n/2} \de^k(\al(z,x))
\]
is continuous in operator norm.
\item The map
\[
U \to C\big( X; \sL(H) \big) \q z \mapsto (1 + \De)^{-n/2} \de^k(\al(z,\cd))
\]
is holomorphic with respect to the supremum norm
\[
\| \cd \|_\infty : C\big( X; \sL(H) \big) \to [0,\infty) \q
f \mapsto \sup_{x \in X} \| f(x) \|
\]
\end{enumerate}
\end{dfn}

We now prove a few lemmas which are concerned with the multiplicative structure of maps which are holomorphic in supremum norm.

\begin{lemma}\label{l:prohol}
Let $\al : U \ti X \to \T{OP}^n(\De)$ and $\be : U \ti X \to \T{OP}^m(\De)$ be holomorphic in supremum norm for some $m,n\in \zz$. Then the pointwise product $\al \cd \be : U \ti X \to \T{OP}^{n + m}(\De)$ is holomorphic in supremum norm.
\end{lemma}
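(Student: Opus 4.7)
The plan is to verify both conditions of Definition \ref{d:holfampse} for the pointwise product $\al \cd \be$ at the order $n+m$. First I would apply the Leibniz rule for the derivation $\de = [\De^{1/2},\cd]$ to obtain
\[
\de^k\bigl(\al(z,x)\be(z,x)\bigr) \ = \ \sum_{j=0}^k \binom{k}{j}\, \de^j\bigl(\al(z,x)\bigr)\, \de^{k-j}\bigl(\be(z,x)\bigr),
\]
which reduces the task to showing that each summand $(1+\De)^{-(n+m)/2}\de^j(\al)\de^{k-j}(\be)$ depends continuously on $(z,x) \in U \ti X$ in operator norm and holomorphically in $z$ in the supremum norm on $C(X;\sL(H))$.

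For each fixed $j$ I would factor the summand as
\[
(1+\De)^{-(n+m)/2}\de^j(\al)\de^{k-j}(\be) \ = \ \bigl[(1+\De)^{-(n+m)/2}\de^j(\al)(1+\De)^{m/2}\bigr] \cd \bigl[(1+\De)^{-m/2}\de^{k-j}(\be)\bigr].
\]
The second bracket has the desired continuity and holomorphy directly from the assumption on $\be$, applied to the order-$m$ operator $\de^{k-j}(\be) \in \T{OP}^m(\De)$. For the first bracket the key is to push $(1+\De)^{m/2}$ through $\de^j(\al)$ using the standard commutator expansion of the Connes--Moscovici pseudodifferential calculus (see \cite{CoMo:LIF,Hig:LFN,CGRS:ILN}): for any $N \in \nn$ one obtains
\[
\de^j(\al)(1+\De)^{m/2} \ = \ \sum_{i=0}^{N-1} c_i \, (1+\De)^{(m-i)/2}\de^{j+i}(\al) \ + \ R_N(\al),
\]
with explicit numerical constants $c_i$ coming from the symbolic expansion of $(1+\De)^{m/2}$, and a remainder $R_N(\al)$ of order $n+m-N$, which for $N$ sufficiently large becomes uniformly bounded after multiplication by $(1+\De)^{-(n+m)/2}$. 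Premultiplying by $(1+\De)^{-(n+m)/2}$ thereby rewrites the first bracket as a finite sum of expressions of the shape $f_i(\De) \cd \bigl[(1+\De)^{-n/2}\de^{j+i}(\al)\bigr] \cd g_i(\De)$ with bounded Borel functions $f_i,g_i$ of $\De$, plus a uniformly bounded remainder; all of these inherit continuity and supremum-norm holomorphy directly from the assumption on $\al$.

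Finally, since multiplication is jointly continuous in $\sL(H)$ and the product of two $C(X;\sL(H))$-valued holomorphic maps is again holomorphic, combining the two brackets and summing over $j$ yields the required continuity of $(z,x) \mapsto (1+\De)^{-(n+m)/2}\de^k(\al\be)(z,x)$ and the holomorphy of $z \mapsto (1+\De)^{-(n+m)/2}\de^k(\al\be)(z,\cd)$ in supremum norm. The only delicate step is the commutator expansion above, which is the standard workhorse of the pseudodifferential calculus and requires careful control of the remainder at each truncation level; once that tool is invoked, everything else reduces to routine bookkeeping of continuity and holomorphy under bilinear operations.
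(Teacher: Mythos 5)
Your opening move (Leibniz rule for $\de = [\De^{1/2},\cd]$, reducing to the summands $\de^j(\al)\,\de^{k-j}(\be)$) matches the paper's. After that the two proofs diverge in a way worth noting.

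You reach for the Connes--Moscovici asymptotic commutator expansion to push a fractional power $(1+\De)^{m/2}$ across $\de^j(\al)$, accepting an infinite symbolic series with a remainder $R_N(\al)$ that must be truncated and controlled. That is a genuinely heavier tool than the situation requires, and it introduces an obligation you do not discharge: it is not enough for $R_N(\al)$ to have low order once $N$ is large, one also needs $(1+\De)^{-(n+m)/2}R_N(\al(z,x))$ to be \emph{continuous in $(z,x)$ and holomorphic in $z$ in the supremum norm}, and you give no argument for that. Since the remainder in the CM expansion is typically an integral expression involving $\de^N(\al)$ sandwiched between resolvents, establishing that it inherits supremum-norm holomorphy is a real (if routine) additional step.

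The paper avoids all of this with a small but decisive observation: because $\de(1+\De^{1/2}) = 0$, the derivation $\de$ commutes with left and right multiplication by $(1+\De^{1/2})^{\pm 1}$, and the definition of $\T{OP}^n(\De)$ is insensitive to replacing $(1+\De)^{1/2}$ by $(1+\De^{1/2})$. Consequently, moving the \emph{integer} power $(1+\De^{1/2})^{-m}$ past $\de^j(\al)$ is governed by an \emph{exact, finite} binomial identity (split into the cases $m\le 0$ and $m\ge 0$, because one case produces positive powers of $(1+\De^{1/2})$ and the other produces resolvent powers); there is no remainder term at all. Each of the finitely many resulting summands is a product of a bounded function of $\De$ with operators of the exact form covered by the hypotheses on $\al$ and $\be$, so continuity and supremum-norm holomorphy follow directly from bilinearity without any further estimates. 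In short: your decomposition is on the right track, but you reach for an asymptotic expansion with a remainder where an exact finite identity, available precisely because the weight is an integer power of $1+\De^{1/2}$, suffices.
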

\begin{proof}
Let $(z,x) \in U \ti X$ and $k \in \nn_0$ be given. 

Suppose first that $m \leq 0$. We then have that
\[
\begin{split}
& (1 + \De^{1/2})^{-n-m}\de^k\big( \al(z,x) \be(z,x) \big)(\xi) \\
& \q = \sum_{i = 0}^k {k \choose i} (1 + \De^{1/2})^{-n-m} \cd \de^i(\al(z,x)) \cd \de^{k - i}(\al(z,x))(\xi) \\
& \q = \sum_{i = 0}^k \sum_{j = 0}^m {k \choose i} {m \choose j} 
(1 + \De^{1/2})^{-n} \cd \de^{i+j}(\al(z,x)) \\ 
& \qqq \qqq \cd (1 + \De^{1/2})^{-m - j}\de^{k - i}(\al(z,x))(\xi)
\end{split}
\]
for all $\xi \in H^\infty$. This computation proves the claim of the lemma in this case.

Suppose then that $m \geq 0$. We then have that
\[
\begin{split}
& (1 + \De^{1/2})^{-n-m}\de^k\big( \al(z,x) \be(z,x) \big)(\xi) \\
& \q = \sum_{i = 0}^k {k \choose i} (1 + \De^{1/2})^{-n-m} \cd \de^i(\al(z,x)) \cd \de^{k - i}(\al(z,x))(\xi) \\
& \q = \sum_{i = 0}^k \sum_{j = 0}^m {k \choose i} {m \choose j} \cd (-1)^j \cd
(1 + \De^{1/2})^{-n-j} \cd \de^{i+j}(\al(z,x)) \\ 
& \qqq \qqq \cd (1 + \De^{1/2})^{-m}\de^{k - i}(\al(z,x))(\xi)
\end{split}
\]
for all $\xi \in H^\infty$. This ends the proof of the lemma.
\end{proof}

\begin{lemma}\label{l:invhol}
Let $\al : U \ti X \to \T{OP}^1(\De)$ be holomorphic in supremum norm. Suppose that $\De + \al(z,x) : \sD(\De) \to H$ is invertible as a closed unbounded operator for each $(z,x) \in U \ti X$. Then the map
\[
(\De + \al)^{-1} : U \ti X \to \T{OP}^{-2}(\De) \q (z,x) \mapsto (\De + \al(z,x))^{-1}
\]
is holomorphic in supremum norm.
\end{lemma}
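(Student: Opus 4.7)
The plan is to split the argument into a pointwise step, showing that $R(z,x) := (\De + \al(z,x))^{-1}$ lies in $\T{OP}^{-2}(\De)$ for each fixed $(z,x) \in U \ti X$, and then a parameter-dependence step, showing that the resulting map $U \ti X \to \T{OP}^{-2}(\De)$ is holomorphic in supremum norm in the sense of Definition \ref{d:holfampse} in degree $n = -2$.

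For the pointwise step the case $k = 0$ is immediate: invertibility of $\De + \al(z,x)$ with domain $\sD(\De)$ gives $R(z,x) : H \to \sD(\De)$ bounded, so $(1+\De)R(z,x)$ is automatically bounded on $H$. For $k \geq 1$, the identity $\de(\De) = [\De^{1/2}, \De] = 0$ yields $\de(\De + \al(z,x)) = \de(\al(z,x))$, and applying the derivation property of $\de$ to the relation $R(z,x)(\De + \al(z,x)) = 1$ gives formally $\de(R(z,x)) = -R(z,x)\de(\al(z,x))R(z,x)$. Iterating the Leibniz rule produces the explicit expansion
\[
\de^k(R(z,x)) = \sum_{j = 1}^k \sum_{\substack{I \in \nn^j \\ |I| = k}} c_I \cd R(z,x) \de^{i_1}(\al(z,x)) R(z,x) \cdots R(z,x) \de^{i_j}(\al(z,x)) R(z,x)
\]
with combinatorial coefficients $c_I$. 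Multiplying on the left by $(1+\De)$ and inserting the resolutions $(1+\De)^{1/2}(1+\De)^{-1/2}$ between adjacent pairs of factors rewrites each summand as a product of bounded operators of three kinds: the leftmost factor $(1+\De)R(z,x)$; the dressed commutators $(1+\De)^{-1/2}\de^{i_\ell}(\al(z,x))$, bounded because $\al(z,x) \in \T{OP}^1(\De)$; and the interior dressed resolvents $(1+\De)^{1/2}R(z,x)(1+\De)^{1/2}$.

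For the parameter dependence I substitute the same expansion and analyze each summand. The first-order resolvent identity
\[
R(z_1,x_1) - R(z_2,x_2) = -R(z_1,x_1)\big( \al(z_1,x_1) - \al(z_2,x_2) \big) R(z_2,x_2),
\]
combined with the supremum-norm holomorphicity of $\al$, yields the analogous regularity of $(z,x) \mapsto R(z,x)$ in operator norm, and by one further resolvent manipulation together with $\De R = 1 - \al R$, of the dressed commutator factors and the interior dressed resolvents. Since finite products and sums of maps which are continuous in $(z,x)$ in operator norm and holomorphic in $z$ with values in $C(X; \sL(H))$ inherit the same regularity (a routine Banach-algebra estimate using the supremum norm of the product), the map $(z,x) \mapsto (1+\De)\de^k R(z,x)$ satisfies both conditions of Definition \ref{d:holfampse}.

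The principal technical obstacle is the boundedness of the interior dressed resolvent $(1+\De)^{1/2}R(z,x)(1+\De)^{1/2}$. This requires complementing the automatic boundedness of $(1+\De)R(z,x)$ with that of $R(z,x)(1+\De)$, which in turn reduces to invertibility of the adjoint $(\De + \al(z,x))^*$ with bounded inverse. Since $\al(z,x)$ is infinitesimally $\De$-bounded (immediate from $(1+\De)^{-1/2}\al(z,x)$ being bounded, via interpolation), the Kato--Rellich-type identity $(\De + \al(z,x))^* = \De + \al(z,x)^*$ holds, so the adjoint inverse is bounded; a quadratic-form interpolation between $(1+\De)R(z,x)$ and $R(z,x)(1+\De)$ then yields the interior factor. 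Once these bounded factorizations are assembled, the expansion for $(1+\De)\de^k R(z,x)$ is a finite sum of norm-continuous, holomorphic-in-$z$ products, and uniformity in $x$ on the compact space $X$ is automatic from the supremum-norm hypothesis on $\al$.
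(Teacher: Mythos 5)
Your proposal takes a genuinely different route from the paper's, and it contains gaps that the paper's route was specifically designed to avoid.

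\textbf{The paper's approach.} Rather than expanding $\de^k\big((\De+\al)^{-1}\big)$ directly, the paper first passes to the \emph{bounded} bijective operator $\be(z,x) := (\De + \al(z,x))(\De + 1)^{-1}$, verifies $\be(z,x) \in \T{OP}^0(\De)$ (an easy product computation), and then invokes the fact that $\T{OP}^0(\De)$ is a unital subalgebra of $\sL(H)$ which is closed under holomorphic functional calculus (\cite[Lemma 2.3]{CaGrKa:AOS}). Hence $\be(z,x)^{-1} = (\De+1)(\De + \al(z,x))^{-1} \in \T{OP}^0(\De)$, from which $(\De + \al(z,x))^{-1} \in \T{OP}^{-2}(\De)$ follows immediately — with \emph{all three} defining conditions. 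In particular this delivers for free that $(\De+\al(z,x))^{-1}$ maps $H^\infty$ into $H^\infty$. The iterated-commutator expansion you write down is then carried out in the paper, but for $\be^{-1}$ rather than for the resolvent itself; since every factor there is already a bounded operator in $\T{OP}^0(\De)$, no dressing by half-powers of $(1+\De)$ is needed and no boundedness issue arises.

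\textbf{Gaps in your direct expansion.} First, you never address condition (2) of the definition of $\T{OP}^{-2}(\De)$, i.e.\ that $(\De + \al(z,x))^{-1}$ preserves $H^\infty$. Invertibility on $\sD(\De)$ does not give this, and without it the expression $\de^k\big((\De+\al(z,x))^{-1}\big) : H^\infty \to H$ is not even defined, so the whole factorization scheme is on uncertain footing. Second, the step ``$\al(z,x)$ is infinitesimally $\De$-bounded, immediate from $(1+\De)^{-1/2}\al(z,x)$ being bounded, via interpolation'' is not correct as stated: boundedness of $(1+\De)^{-1/2}\al(z,x)$ says $\al(z,x)$ is bounded from $\Hsp_0$ to $\Hsp_{-1}$ in the Sobolev chain of $\De$, whereas relative $\De$-boundedness requires $\al(z,x)$ bounded from $\Hsp_1$ (or $\Hsp_2$) to $\Hsp_0$, i.e.\ $\al(z,x)(1+\De)^{-1/2}$ bounded. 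Passing from one side of $\al$ to the other requires the \emph{first-order} commutator bound $(1+\De)^{-1/2}\de(\al(z,x))$ as well; this is precisely the content of the short domain argument at the start of the paper's proof (with the identity involving $(1+\De^{1/2})^{-1}(\al + \de(\al) + \al\De^{1/2})$), and is not immediate. Third, the ``quadratic-form interpolation'' giving boundedness of $(1+\De)^{1/2}(\De+\al)^{-1}(1+\De)^{1/2}$ from boundedness of $(1+\De)(\De+\al)^{-1}$ and $(\De+\al)^{-1}(1+\De)$ is a nontrivial Stein/Heinz interpolation step that is asserted rather than argued, and the parameter-dependence claim for these dressed factors inherits the same unproved assertions. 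All of these obstructions evaporate in the paper's argument because they are subsumed in the single structural fact that $\T{OP}^0(\De)$ is spectrally invariant. I'd encourage you to rework the proof around the operator $\be(z,x) = (\De + \al(z,x))(\De+1)^{-1}$ and the cited spectral invariance lemma.
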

\begin{proof}
Remark first of all that $\sD(\De^{1/2}) \su \sD(\al(z,x))$ for all $(z,x) \in U \ti X$. To see this, let $\xi \in \sD(\De^{1/2})$ and choose a sequence $\{\xi_n\}$ in $H^\infty$ with $\xi_n \to \xi$ and $\De^{1/2} \xi_n \to \De^{1/2} \xi$. Then $\al(z,x) \xi_n \to \al(z,x)(\xi)$. Indeed, this follows since
\[
\begin{split}
\al(z,x)(\xi_n) & = (1 + \De^{1/2})^{-1} (1 + \De^{1/2}) \al(z,x)(\xi_n) \\
& = (1 + \De^{1/2})^{-1} \cd \big( \al(z,x) + \de(\al(z,x)) + \al(z,x) \De^{1/2} \big)(\xi_n)
\end{split}
\]
The unbounded operator $\De + \al(z,x) : \sD(\De) \to H$ is thus well-defined for all $(z,x) \in U \ti X$. The fact that $\De + \al(z,x) : \sD(\De) \to H$ is closed is a consequence of the Kato-Rellich theorem, see \cite[Chapther IV, Theorem 1.1]{Kat:PT}.

It is now straightforward to verify that the map
\[
\De + \al : U \ti X \to \T{OP}^2(\De) \q (z,x) \mapsto \De + \al(z,x)
\]
is holomorphic in supremum norm.

Let $(z,x) \in U \ti X$ and let us continue by showing that
$
(\De + \al(z,x))^{-1} \in \T{OP}^{-2}(\De).
$
To this end, it is clearly sufficient to prove that
\[
(\De + 1)(\De + \al(z,x))^{-1} \in \T{OP}^0(\De).
\]
We therefore notice that the linear map
\[
\be(z,x) := (\De + \al(z,x))(\De + 1)^{-1} : H \to H
\]
is bijective. Furthermore, we have that $\be(z,x) \in \T{OP}^0(\De)$. Since $\T{OP}^0(\De)$ is closed under holomorphic functional calculus (as a unital subalgebra of $\sL(H)$), see \cite[Lemma 2.3]{CaGrKa:AOS}, we may conclude that
\[
(\De + 1)(\De + \al(z,x))^{-1} = \big( (\De + \al(z,x))(\De + 1)^{-1} \big)^{-1} \in \T{OP}^0(\De). 
\]

To see that
\[
U \ti X \to \T{OP}^{-2}(\De) \q (z,x) \mapsto (\De + \al(z,x))^{-1}
\]
is holomorphic in supremum norm it is now enough to check that the map
\[
\be^{-1} : U \ti X \to \T{OP}^0(\De) \q (z,x) \mapsto (\De + 1)(\De + \al(z,x))^{-1}
\]
is holomorphic in supremum norm. We remark that it follows by Lemma \ref{l:prohol} that the map
\[
\be : U \ti X \to \T{OP}^0(\De) \q (z,x) \mapsto (\De + \al(z,x))(\De + 1)^{-1}
\]
is holomorphic in supremum norm. 

We now prove by induction on $k \in \nn_0$ that $\be^{-1}$ satisfies the conditions $(1)$ and $(2)$ in Definition \ref{d:holfampse}.
For $k=0$ this follows from the identity
\[
\be(z,x)^{-1} - \be(w,y)^{-1} = - \be(z,x)^{-1} (\be(z,x) - \be(w,y)) \be(w,y)^{-1} 
\]
which is valid for all $(z,x) ,(w,y) \in U \ti X$.

To prove the induction step, we let $k \in \nn$ and note that
\[
\begin{split}
 \de^k\big( \be(z,x)^{-1} \big)(\xi) 
 &= -\de^{k-1}\big(  \be(z,x)^{-1} \de(\be(z,x)) \be(z,x)^{-1} \big)(\xi) \\
&  = - \sum_{i = 0}^{k-1} \sum_{j = 0}^{k-1-i} {k-1 \choose i} {k-1-i \choose j} \\
& \qqq \cd \de^i\big( \be(z,x)^{-1} \big)  \cd \de^{j + 1}(\be(z,x)) \cd 
\de^{k-1-i - j}\big( \be(z,x)^{-1} \big)(\xi)
\end{split}
\]
for all $(z,x) \in U \ti X$ and all $\xi \in H^\infty$. 
\end{proof}

We end this appendix by providing a useful estimate.

\begin{prop}\label{p:resestpow}
Let $\al : U \ti X \to \T{OP}^1(\De)$ be holomorphic in supremum norm. Suppose that, {\bla for each $k \in \nn_0$, we have
\begin{equation}\label{eq:openor}
\sup_{(z,x) \in U \ti X} \big\| \de^k\big( \al(z,x) \big) (1 + \De)^{-1/2} \big\| < \infty
\end{equation}
Then,} for each $k \in \nn_0$ and $\ep > 0$, there exists a constant $C_{k,\ep} > 0$ such that
\[
\sup_{(z,x) \in U \ti X}\big\| (\De + 1)^p \cd \de^k \big( (\De + \al(z,x) + \la)^{-1} \big) \big\| \leq C_{k,\ep} \cd \la^{-1 + p}
\]
for all $p \in [0,1]$ and all $\la \geq 1$ with
\[
\sqrt{\la} \geq \sup_{(z,x) \in U \ti X} \| \al(z,x) (\De + 1)^{-1/2} \| + \ep
\]
\end{prop}
\begin{proof}
Notice first that $\de^k\big( \al(z,x) \big) (\De + 1)^{-1/2} : H^\infty \to H$ extends to a bounded operator for all $(z,x) \in U \ti X$ and all $k \in \nn_0$. The operator norm appearing in \eqref{eq:openor} therefore makes sense.

Notice next that the closed unbounded operator
\[
\De + \al(z,x) + \la : \sD(\De) \to H
\]
is invertible for all $(z,x) \in U \ti X$ and all $\la \geq 1$ with
\begin{equation}\label{eq:deflam}
\sup_{(z,x) \in U \ti X} \| \al(z,x) (\De + 1)^{-1/2} \| < \sqrt{\la}
\end{equation}
Indeed, in this case the inverse is given by
\begin{equation}\label{eq:resalplam}
(\De + \al(z,x) + \la)^{-1} = (\De + \la)^{-1} \big( 1 +  \al(z,x) (\De + \la)^{-1} \big)^{-1}
\end{equation}
Remark here that
\[
\big\| \al(z,x) (\De + \la)^{-1} \big\| \leq \la^{-1/2} \cd \| \al(z,x) (\De + 1)^{-1/2} \| < 1
\]
for all $(z,x) \in U \ti X$, where the first inequality follows since $\la \geq 1$ and the second follows by \eqref{eq:deflam}.

To ease the notation, let now
\[
L := \sup_{(z,x) \in U} \| \al(z,x) (\De + 1)^{-1/2} \|
\]
Let $\ep > 0$, $p \in [0,1]$ and $\la \geq 1$ with
$
\sqrt{\la} \geq L  + \ep
$
be given. Using the explicit form of the inverse in \eqref{eq:resalplam} we obtain that
\[
(\De + 1)^p (\De + \al(z,x) + \la)^{-1} = (\De + 1)^p (\De + \la)^{-1} \cd \big( 1 + \al(z,x) (\De + \la)^{-1} \big)^{-1}
\]
for all $(z,x) \in U \ti X$. And it therefore follows that
\[
\begin{split}
\big\| (\De + 1)^p (\De + \al(z,x) + \la)^{-1} \big\| 
& \leq \big\| (\De + \la)^{-1 + p} \big\|
\cd \big\| (1 + \al(z,x)(\De + \la)^{-1})^{-1}\big\| \\
& \leq \la^{-1 + p} \cd \big( 1 - \| \al(z,x)(\De + \la)^{-1} \| \big)^{-1} \\
& \leq \la^{-1 + p} \cd \big( 1 - {L}{(L + \ep)^{-1}} \big)^{-1}
\end{split}
\]
where the second inequality is a consequence of the series expansion for $(1 + \al(z,x)(\De + \la)^{-1})^{-1}$ and the third inequality follows since
\[
\| \al(z,x) (\De + \la)^{-1} \| \leq \la^{-1/2} \cd L \leq {L}{(L + \ep)^{-1}}
\]
for all $(z,x) \in U \ti X$. These considerations prove the proposition in the case where $k = 0$.

The general case now follows by induction on $k \in \nn_0$ because of the identity
\[
\begin{split}
 \de^k\big( (\De + \al + \la)^{-1} \big) 
& = - \sum_{i = 0}^{k - 1} \sum_{j = 0}^{k - 1 - i} {k - 1 \choose i} {k - 1 - i \choose j} \\
& \q \  \cd \de^i\big( (\De + \al + \la)^{-1} \big) \cd \de^{j + 1}(\al) 
\cd \de^{k - 1 - i - j}\big( (\De + \al + \la)^{-1} \big)
\end{split}
\]
which was derived in the proof of Lemma \ref{l:invhol}.
\end{proof}

\section{Trace norm holomorphy}\label{s:tranorhol}
In this appendix we will draw some consequences of the investigations on holomorphic families of abstract pseudodifferential operators which we carried out in Appendix \ref{s:holpseope}. We will here mainly be concerned with holomorphic maps with values in the trace class operators. The general context will be as follows: 

Throughout this appendix we let $T : H \to H$ be a bounded operator and $\De : \sD(\De) \to H$ be a selfadjoint positive unbounded operator on the separable Hilbert space $H$. We will suppose that there exists an $m \in \nn_0$ such that
\[
T \cd (1 + \De)^{-m} \in \sL^1(H)
\]
{\bla We will also fix an open set $U \su \cc$ and a compact Hausdorff space $X$.}

The next result follows from Lemma \ref{l:invhol} and Lemma \ref{l:prohol} together with the basic inequality
$
\| R \cd S \|_1 \leq \| R \|_1 \cd \| S \|
$
which is valid for all $R \in \sL^1(H)$ and all $S \in \sL(H)$, where $\| \cd \|_1 : \sL^1(H) \to [0,\infty)$ denotes the trace norm.

\begin{prop}\label{p:tranorhol}
Let $\al : U \ti X \to \T{OP}^1(\De)$ be a map which is holomorphic in supremum norm. Suppose that
\[
\De + \al(z,x) : \sD(\De) \to H
\]
is invertible (as a closed unbounded operator) for all $(z,x) \in U \ti X$. Then the map
\[
U \ti X \to \sL^1(H) \q (z,x) \mapsto T \cd (\De + \al(z,x))^{-m}
\]
is continuous in trace norm. Furthermore, the map
\[
U \to C\big( X; \sL^1(H) \big) \q z \mapsto T \cd (\De + \al(z,\cd))^{-m}
\]
is holomorphic with respect to the norm
\begin{equation}\label{eq:norcontra}
\| \cd \|_{\infty,1} : C\big( X; \sL^1(H) \big) \to [0,\infty) \q
f \mapsto \sup_{x \in X} \| f(x) \|_1
\end{equation}
\end{prop}

For each $r > 0$, let $\B B_r(0) := \{ z \in \cc \, \big| \, |z| < r \}$ {\bla denote the open ball of radius $r > 0$ and center $0 \in \cc$.} As a consequence of Proposition \ref{p:tranorhol} and standard results about holomorphic Banach space valued maps we obtain the following:

\begin{prop}\label{p:tranorpow}
Let $\de > 0$ and let $\al : \B B_{1 + 2\de}(0) \ti X \to \T{OP}^1(\De)$ be holomorphic in supremum norm. Suppose that
\[
\De + \al(z,x) : \sD(\De) \to H
\]
be invertible (as a closed unbounded operator) for all $(z,x) \in U \ti X$. Then there exists a sequence $\{a_j\}_{j = 0}^\infty$ in $C(X;\sL^1(H))$ such that the identity
\[
T \cd (\De + \al(z,\cd ) )^{-m} = \sum_{j = 0}^\infty z^j \cd a_j
\]
holds for all $z \in \B B_{1 + \de}(0)$ and the sum converges absolutely in the norm on $C(X; \sL^1(H))$ described in \eqref{eq:norcontra}.  
\end{prop}

Under the conditions of the above proposition we may express the coefficients $a_j \in C\big(X; \sL^1(H) \big)$ as
\[
a_j(x) = \frac{T}{j!} \cd \frac{d^j}{dz} (\De + \al(\cd , x)  )^{-m} \big|_{z = 0} \q \forall x \in X
\]
where the derivatives are taken with respect to the operator norm.

The next result allows us to improve the statement of Proposition \ref{p:tranorhol} by including non-integer exponents:

\begin{prop}\label{p:traholrea}
Let $\al : U \ti X \to \T{OP}^1(\De)$ be holomorphic in supremum norm. Suppose that
\[
\T{Sp}\big( \De + \al(z,x) \big) \su \cc\sem (-\infty,0]
\]
for all $(z,x) \in U \ti X$, where $\T{Sp}\big( \De + \al(z,x)\big)$ denotes the spectrum of $\De + \al(z,x) : \sD(\De) \to H$ as a closed unbounded operator. Suppose furthermore that
\[
\De + \al(t,x) : \sD(\De) \to H
\]
is selfadjoint and positive for all $(t,x) \in U \cap \rr \ti X$. Let $q \in (0,1)$. Then the map
\[
U \cap \rr \ti X \to \sL(H) \q (t,x) \mapsto (\De + \al(t,x))^{-q}
\]
has a continuous extension $\be : U \ti X \to \sL(H)$ such that
\[
U \to C(X;\sL(H)) \q z \mapsto \be(z,\cd)
\]
is holomorphic in the $C^*$-norm on $C(X;\sL(H))$.
\end{prop}
\begin{proof}
To ease the notation, we define
\[
R(z,x,\mu) := (\De + \al(z,x) + \mu)^{-1} \q \T{for all } (z,x,\mu) \in U \ti X \ti [0,\infty)
\]
We notice that it follows by Lemma \ref{l:invhol} that the map
\[
U \ti X \ti [0,\infty) \to \sL(H) \q (z,x,\mu) \mapsto (\De + 1)^p R(z,x,\mu)
\]
is continuous in operator norm for all $p \in [0,1]$.

Let us now recall the integral formula
\[
(\De + \al(t,x))^{-q} = \frac{\sin(q\pi)}{\pi} \cd \int_0^\infty \mu^{-q} \cd R(t,x,\mu) \, d\mu
\]
which is valid for all $(t,x) \in U \cap \rr \ti X$. We therefore define the map
\[
\be : U \ti X \to \sL(H) \q 
(z,x) \mapsto \frac{\sin(q\pi)}{\pi} \cd \int_0^\infty \mu^{-q} \cd R(z,x,\mu) \, d\mu 
\]

Consider now a fixed element $(z_0,x_0) \in U \ti X$ and choose an open set $V \su U$ such that $z_0 \in V$ and such that the closure $\ov{V}$ is compact and contained in $U$.

By Lemma \ref{l:invhol} and Proposition \ref{p:resestpow} we may then find a constant $C > 0$ such that
\begin{equation}\label{eq:estres}
\| R(z,x,\mu) \| \leq C \cd (1 + \mu)^{-1} \q \T{and} \q
\|(\De + 1)^{1/2} R(z,x,\mu) \| \leq C \cd (1 + \mu)^{-1/2}
\end{equation}
for all $(z,x,\mu) \in V \ti X \ti [0,\infty)$.

The estimates in \eqref{eq:estres} clearly imply that the integral
\[
\int_0^\infty \mu^{-q} R(z_0,x_0,\mu) \, d\mu
\]
converges absolutely in operator norm. The map $\be : U \ti X \to \sL(H)$ is therefore well-defined.

To see that $\be : U \ti X \to \sL(H)$ is continuous in operator norm we apply the resolvent identity together with the estimates in \eqref{eq:estres} to obtain that
\[
\begin{split}
& \int_0^\infty \mu^{-q} \| R(z,x,\mu) - R(z_0,x_0,\mu)\| \, d\mu  \\
& \q \leq \| \big( \al(z,x) - \al(z_0,x_0)\big)(\De + 1)^{-1/2} \| \cd
\int_0^\infty \mu^{-q} \cd (1 + \mu)^{-3/2} \cd C^2 \, d\mu
\end{split}
\]
for all $(z,x) \in V \ti X$.

To see that the map
\[
U \to C(X;\sL(H)) \q z \mapsto \be(z,\cd)
\]
is holomorphic in the norm $\| \cd \|_\infty : f \mapsto \sup_{x \in X} \| f(x) \|$ we define the bounded operator
\[
\ga(z_0,x) := \frac{d\big( \al(\cd,x)(\De + 1)^{-1/2}\big)}{dz} \big|_{z = z_0}
\]
for all $x \in X$. Another application of the resolvent identity and the estimates in \eqref{eq:estres} then shows that
\[
\begin{split}
& \int_0^\infty \mu^{-q} \cd \big\| \frac{R(z,x,\mu) - R(z_0,x,\mu) }{z - z_0} \\
& \qqqq + R(z_0,x,\mu) \cd \ga(z_0,x) \cd (\De + 1)^{1/2} R(z_0,x,\mu) \big\| \, d\mu \\
& \q \leq 
\big\| \frac{\al(z,x) - \al(z_0,x)}{z - z_0} \cd (\De + 1)^{-1/2} - \ga(z_0,x) \big\|
\cd \int_0^\infty \mu^{-q} \cd (1 + \mu)^{-3/2} \cd C^2 \, d\mu \\
& \qq 
+ \| \ga(z_0,x) \| \cd \| (\al(z,x) - \al(z_0,x))(\De + 1)^{-1/2} \| \\
& \qqq \cd \int_0^\infty \mu^{-q} \cd (1 + \mu)^{-2} \cd C^3 \, d\mu
\end{split}
\]
for all $(z,x) \in V \ti X$ with $z \neq z_0$. 
This proves the present proposition.
\end{proof}


\section{Factorization of operator traces}\label{ss:pretra}
Throughout this appendix we let $H_1$ and $H_2$ be separable Hilbert spaces. We denote their Hilbert space tensor product by $H_1 \hot H_2$.

We remark that the algebraic tensor product of trace ideals $\sL^1(H_1) \ot \sL^1(H_2)$ is dense in the trace ideal $\sL^1(H_1 \hot H_2)$ with respect to the trace norm. Furthermore, we may compute the restriction of the operator trace $\T{Tr} : \sL^1(H_1) \ot \sL^1(H_2) \to \cc$ as the composition
\[
\begin{CD}
\T{Tr} : \q \sL^1(H_1) \ot \sL^1(H_2) @>{\T{Tr} \ot 1}>> \sL^1(H_2) @>{\T{Tr}}>> \cc
\end{CD}
\]

\emph{The aim of this appendix is to show that $\T{Tr} \ot 1 : \sL^1(H_1) \ot \sL^1(H_2) \to \sL^1(H_2)$ extends by continuity to a bounded (contractive) operator $\T{Tr} \ot 1 : \sL^1(H_1 \hot H_2) \to \sL^1(H_2)$.}

We believe that this result is well-known by the experts, but for lack of a reference and because of the importance of this result for the present work, we have decided to supply a full proof.

Let us fix an orthonormal basis $\{e_n\}_{n = 1}^\infty$ for the Hilbert space $H_1$. For each $n \in \nn$, we let $P_n : H_1 \to H_1$ denote the orthogonal projection with image $\cc e_n \su H_1$. For each $N \in \nn$, we define $Q_N := \sum_{n=1}^N P_n : H_1 \to H_1$.

\begin{lemma}\label{l:preine}
For each $N \in \nn$ and each $R = \sum_{j = 1}^k T_j \ot S_j \in \sL^1(H_1) \ot \sL^1(H_2)$ we have the inequality of trace norms:
\[
\|(\T{Tr} \ot 1)(R(Q_N \ot 1)) \|_1 \leq \| \sum_{n = 1}^N (P_n \ot 1) R (P_n \ot 1) \|_1 
\]
\end{lemma}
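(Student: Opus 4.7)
My plan is to reduce both sides to explicit expressions that can be compared directly, and then note that the right-hand side is essentially a block-diagonal operator whose trace norm decomposes as a sum.

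First I would compute both sides on the generic element $R = \sum_{j=1}^k T_j \ot S_j$. Define the vector $c_n := \sum_{j=1}^k \inn{e_n, T_j e_n} S_j \in \sL^1(H_2)$ for each $n \in \nn$. A direct calculation using $\T{Tr}(T_j P_n) = \inn{e_n, T_j e_n}$ and $Q_N = \sum_{n=1}^N P_n$ gives
\[
(\T{Tr} \ot 1)\big(R (Q_N \ot 1)\big) = \sum_{n=1}^N c_n \in \sL^1(H_2),
\]
while using $P_n T_j P_n = \inn{e_n, T_j e_n} P_n$ gives
\[
\sum_{n=1}^N (P_n \ot 1) R (P_n \ot 1) = \sum_{n=1}^N P_n \ot c_n \in \sL^1(H_1 \hot H_2).
\]

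The key observation is then that the operator $\sum_{n=1}^N P_n \ot c_n$ is block diagonal: under the orthogonal decomposition $H_1 \hot H_2 = \bigoplus_{n=1}^\infty \cc e_n \hot H_2 \cong \bigoplus_{n=1}^\infty H_2$, the summand $P_n \ot c_n$ acts as $c_n$ on the $n^{\T{th}}$ block and as zero elsewhere, and the summands for distinct $n$ act on mutually orthogonal subspaces. Consequently the singular values of $\sum_{n=1}^N P_n \ot c_n$ are the disjoint union of the singular values of $c_1,\ldots,c_N$, so
\[
\Big\| \sum_{n=1}^N P_n \ot c_n \Big\|_1 = \sum_{n=1}^N \| c_n \|_1.
\]
(If preferred, this can instead be obtained by choosing a polar decomposition $c_n = v_n |c_n|$ in $\sL(H_2)$ and verifying that $\sum_{n=1}^N P_n \ot v_n$ is a partial isometry implementing the polar decomposition of $\sum_{n=1}^N P_n \ot c_n$.)

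Finally, applying the ordinary triangle inequality for the trace norm on $\sL^1(H_2)$ yields
\[
\Big\| \sum_{n=1}^N c_n \Big\|_1 \le \sum_{n=1}^N \| c_n \|_1 = \Big\| \sum_{n=1}^N P_n \ot c_n \Big\|_1,
\]
which is exactly the claimed inequality. The only mildly delicate point is the block-diagonal trace norm identity, but it is an elementary consequence of the orthogonality of the source and range subspaces of the summands, so no real obstacle is expected.
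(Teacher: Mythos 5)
Your proof is correct and follows essentially the same route as the paper's: the same reduction to the operators $c_n = \sum_j \inn{e_n,T_je_n}S_j$, the triangle inequality in $\sL^1(H_2)$, and the block-diagonal trace norm identity $\bigl\| \sum_{n=1}^N P_n \ot c_n \bigr\|_1 = \sum_{n=1}^N \|c_n\|_1$, which the paper expresses via $\T{Tr}\bigl(\sum_n P_n \ot |c_n|\bigr) = \T{Tr}\bigl(\bigl|\sum_n P_n \ot c_n\bigr|\bigr)$. The only difference is cosmetic: you phrase the key identity in terms of singular values on orthogonal blocks, while the paper verifies the equality of absolute values directly.
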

\begin{proof}
For each $j \in \{1,\ldots,k\}$ and each $n \in \{1,\ldots,N\}$, define the complex number
\[
\la_j^n := \inn{e_n,T_j e_n}
\]
It follows that
\[
\T{Tr}(T_j Q_N) = \sum_{n = 1}^N \la_j^n \q \T{and} \q
P_n T_j P_n = \la_j^n P_n
\]
for all $j \in \{1,\ldots,k\}$ and all $n \in \{1,\ldots,N\}$.

This observation implies that
\[
\begin{split}
& \| (\T{Tr} \ot 1)(R(Q_N \ot 1)) \|_1  = \| \sum_{j = 1}^k \sum_{n = 1}^N \la_j^n \cd S_j \|_1
\leq \sum_{n= 1}^N \| \sum_{j = 1}^k \la_j^n \cd S_j \|_1 \\
& \q = \T{Tr}\big( \sum_{n = 1}^N P_n \ot \big|\sum_{j = 1}^k \la_j^n \cd S_j \big| \big)
= \T{Tr}\big( \big| \sum_{n = 1}^N P_n \ot \sum_{j = 1}^k \la_j^n \cd S_j \big| \big) \\
& \q = \| \sum_{j = 1}^k \sum_{n = 1}^N P_n T_j P_n \ot S_j \|_1
\end{split}
\]
The lemma is proved.
\end{proof}

\begin{prop}\label{p:intver}
The linear map
\[
\T{Tr} \ot 1 : \sL^1(H_1) \ot \sL^1(H_2) \to \sL^1(H_2)
\]
extends to a bounded contractive operator
\[
\T{Tr} \ot 1 : \sL^1(H_1 \hot H_2) \to \sL^1(H_2)
\]
Furthermore, the operator trace on $\sL^1(H_1 \hot H_2)$ can be rewritten as
\[
\T{Tr} = \T{Tr} \ci (\T{Tr} \ot 1) : \sL^1(H_1 \hot H_2) \to \cc
\]
\end{prop}
\begin{proof}
By the observations in the beginning of this appendix, it is enough to show that
\[
\| (\T{Tr} \ot 1)(R) \|_1 \leq \| R \|_1
\]
for all $R \in \sL^1(H_1) \ot \sL^1(H_2)$.

Thus, let $R \in \sL^1(H_1) \ot \sL^1(H_2)$ be given.
By \cite[Chapter III, Theorem 4.2]{GoKr:ITN} we have the inequality
\[
\| \sum_{n = 1}^N (P_n \ot 1) R (P_n \ot 1) \|_1 \leq \| R \|_1
\]
for all $N \in \nn$. Thus, by Lemma \ref{l:preine} we obtain that
\[
\| (\T{Tr} \ot 1)(R (Q_N \ot 1)) \|_1 \leq \| R\|_1
\]
for all $N \in \nn$. But this proves the present proposition since the sequence $\{ (\T{Tr} \ot 1)(R (Q_N \ot 1)) \}_{N = 1}^\infty$ converges in trace norm to $(\T{Tr} \ot 1)(R)$ by normality of the operator trace. 
\end{proof}

}

\newcommand{\etalchar}[1]{$^{#1}$}
\def\cprime{$'$}
\providecommand{\bysame}{\leavevmode\hbox to3em{\hrulefill}\thinspace}
\providecommand{\MR}{\relax\ifhmode\unskip\space\fi MR }
\providecommand{\MRhref}[2]{%
  \href{http://www.ams.org/mathscinet-getitem?mr=#1}{#2}
}
\providecommand{\href}[2]{#2}


\end{document}